\definecolor{uuuuuu}{rgb}{0.26666666666666666,0.26666666666666666,0.26666666666666666}
\definecolor{darkgreen}{HTML}{0d8513}
\newtheorem{theorem}{Theorem}[section]
\newtheorem{lemma}[theorem]{Lemma}
\newtheorem{corollary}[theorem]{Corollary}
\newtheorem{proposition}[theorem]{Proposition}
\newtheorem{remark}[theorem]{Remark}
\newtheorem{definition}[theorem]{Definition}
\newtheorem*{claim}{Claim}
\newtheorem{maintheorem}{Theorem}
\numberwithin{equation}{section}
\def\ind{{\mathbf 1}}
\def\N{\mathbb{N}}
\def\L{\mathbb{L}}
\def\P{\mathbb{P}}
\def\Z{\mathbb{Z}}
\def\R{\mathbb{R}}
\def\E{\mathbb{E}}
\def\l{\ell}
\newcommand{\cF}{\mathcal{F}}
\newcommand{\cD}{\mathcal{D}}
\newcommand{\cI}{\mathcal{I}}
\newcommand{\cT}{\mathcal{T}}
\newcommand{\cN}{\mathcal{N}}
\newcommand{\cE}{\mathcal{E}}
\newcommand{\e}{\varepsilon}
\newcommand{\boo}{\mathbf{0}}
\newcommand{\bn}{\mathbf{n}}
\newcommand{\us}{\underline{s}}
\newcommand{\os}{\overline{s}}
\newcommand{\uS}{\underline{S}}
\newcommand{\oS}{\overline{S}}
\newcommand{\uA}{\underline{A}}
\newcommand{\oA}{\overline{A}}
\newcommand{\oH}{\overline{H}}
\newcommand{\oM}{\overline{M}}
\newcommand{\ol}{\overline{l}}
\newcommand{\uT}{\underline{T}}
\newcommand{\oT}{\overline{T}}
\newcommand{\hS}{\hat{S}}
\newcommand{\ocT}{\overline{\mathcal{T}}}
\newcommand{\ucT}{\underline{\mathcal{T}}}
\newcommand{\ocG}{\overline{\mathcal{G}}}
\newcommand{\ucG}{\underline{\mathcal{G}}}
\newcommand{\hW}{\hat{W}}
\newcommand{\hxi}{\hat{\xi}}
\newcommand{\sT}{\mathscr{T}}
\newcommand{\sI}{\mathscr{I}}
\newcommand{\rd}{{\rm d}}
\newcommand{\don}{\mathds{1}}
\DeclareMathOperator{\MP}{MultiPeak}
\DeclareMathOperator{\LAP}{Laplace}
\DeclareMathOperator{\Exp}{Exp}
\begin{document}
\title{Infinite order phase transition in the slow bond TASEP}

\author{
Sourav Sarkar
\thanks{Centre for Mathematical Sciences, University of Cambridge.
Email:~ss2871@cam.ac.uk}
\and
Allan Sly
\thanks{Department of Mathematics, Princeton University. Email: allansly@princeton.edu}
\and
Lingfu Zhang
\thanks{Department of Mathematics, Princeton University, and Department of Statistics, University of California, Berkeley. Email: lfzhang@berkeley.edu}
}

\date{}
\maketitle

\begin{abstract}
In the slow bond problem  the rate of a single edge in the Totally Asymmetric Simple Exclusion Process (TASEP) is reduced from 1 to $1-\varepsilon$ for some small $\varepsilon>0$.   Janowsky and Lebowitz~\cite{JL92} posed the well-known question of whether such very small perturbations could affect the macroscopic current.  Different groups of physicists, using a range of heuristics and numerical simulations reached opposing conclusions on whether  the critical value of $\varepsilon$ is 0. This was ultimately resolved rigorously in~\cite{BSS14} which established that $\varepsilon_c=0$.

Here we study the effect of the current as $\varepsilon$ tends to 0 and in doing so explain why it was so challenging to predict on the basis of numerical simulations.  In particular we show that the current has an infinite order phase transition at 0, with the effect of the perturbation tending to 0 faster than any polynomial.  Our proof focuses on the Last Passage Percolation formulation of TASEP where a slow bond corresponds to reinforcing the diagonal.  We give a multiscale analysis to show that when $\varepsilon$ is small the effect of reinforcement remains small compared to the difference between optimal and near optimal geodesics.  Since geodesics can be perturbed on many different scales, we inductively bound the tails of the effect of reinforcement by controlling the number of near optimal geodesics and giving new tail estimates for the local time of (near) geodesics along the diagonal.
\end{abstract}

\section{Introduction}

In the Totally Asymmetric Simple Exclusion Process (TASEP) particles on $\Z$ move from left to right, jumping according to rate 1 Poisson clocks on each edge, but with moves blocked if there is a particle to its right.  Starting with work of Johansson~\cite{Jo99}, methods from integrable probability have provided an increasingly detailed description of the dynamics~\cite{BFPP,BL,BFS,BFS08,FP,L19,MQR}.  

The current, the rate at which particles cross the origin, is maximized at $\frac14$ when the particle density is one half.  This is a global property of the system, since conservation of particles means that the rate has to be equal everywhere.  Janowsky and Lebowitz~\cite{JL92} asked how this would be affected by a local perturbation, in particular reducing the rate of a single edge at the origin from 1 to $1-\varepsilon$, a so-called \emph{slow bond}.  Through heuristic arguments, they predicted a reduction in the current for any $\varepsilon>0$.  Later, another group of physicists arrived at the opposite conclusion~\cite{HTdN}.  Through numerical simulations and arguments of finite size scaling, they estimated $\varepsilon_c\approx 0.2$.  This problem remained unresolved until work of Basu et.~al.~\cite{BSS14} established that $\varepsilon_c = 0$, that is that there is always a slowdown.  The aim of this paper is to explain why this question was hard to answer heuristically and via simulations.

The movement of particles in TASEP can be mapped to Last Passage Percolation (LPP) with rate 1 exponential weights and it is this setup that we analyse.  The time for the $n$-th particle to pass the origin from step initial conditions is given by the passage time from the origin to $(n,n)$ which we denote $T_n$ (see Section~\ref{s:prelim} for the precise definitions).  The passage time grows like $\frac1{n}T_n\approx 4$ which corresponds to the inverse of the current.  In the slow bond model, the weights along the diagonal are said to be reinforced, replaced with larger rate $1-\e$ exponentials and we denote the corresponding passage time $T_n^\e$.  By the Subadditive Ergodic Theorem the limit satisfies a law of large numbers which we denote
\[
\Xi(\e) = \lim_{n\to \infty} \frac{1}{n}\E[T_n^{\varepsilon}] - 4.
\]
The results of~\cite{BSS14} show that $\Xi(\e)>0$ for all positive $\varepsilon$.  The proof is by a multi-scale argument which shows that the reinforcement  tends to attract the geodesic towards staying closer to the diagonal.  This happens if the optimal geodesic is close to another near optimal geodesic that spends more time along the diagonal.  After reinforcement, the new geodesic is larger.  This has a small probability for small $\varepsilon$, but improvements to the geodesic can be made on any scale so the proof makes use of an accumulation of increases to the expected passage time over a series of different scales, which in total show that $T_n^{\varepsilon}>4n$ for large $n$. Together with the Subadditive Ergodic Theorem this implies the result. No lower bound on this increase is given in~\cite{BSS14}, but when $\varepsilon$ is small the accumulation of many scales are needed and it would at best give a lower bound of  $\Xi(\e) \geq e^{-c\varepsilon^{-2}}$, which is very small indeed.  

One can ask whether this is an artifact of the proof or if $\Xi$ is indeed very small.  Here we show that it tends to 0 faster than any polynomial.

\begin{maintheorem}\label{t:main} 
For every $C\geq 1$ we have that  $\Xi(\varepsilon) \leq  O(\varepsilon^C)$.
\end{maintheorem}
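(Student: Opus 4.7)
The plan is to upper-bound $\Xi(\varepsilon)$ by controlling the expected local time of the \emph{reinforced} geodesic on the diagonal through a multiscale induction. Under the natural coupling $X_{i,i}^{(\varepsilon)} = X_{i,i}/(1-\varepsilon)$ on the diagonal, comparing the passage times along the reinforced geodesic $\gamma^{(\varepsilon)}$ yields the deterministic inequality
\[
T_n^\varepsilon - T_n \;\leq\; \frac{\varepsilon}{1-\varepsilon}\sum_{i=0}^n X_{i,i}\,\don[(i,i)\in\gamma^{(\varepsilon)}].
\]
An elementary size-bias argument for the exponential weights (using that, conditional on $(i,i)\in\gamma^{(\varepsilon)}$, the weight $X_{i,i}$ has expectation $O(1)$ by the memoryless property) reduces bounding the right side to bounding the expected diagonal local time $\E L(\gamma^{(\varepsilon)})$, where $L(\gamma)$ counts diagonal vertices of $\gamma$. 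The theorem therefore follows from $\E L(\gamma^{(\varepsilon)}) \leq O_C(\varepsilon^{C-1})\, n$ for every $C \geq 1$, or equivalently from super-polynomial-in-$\varepsilon$ tail bounds on $L(\gamma^{(\varepsilon)})$.

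At each scale $n$ the basic competition is the following. The unreinforced geodesic spends local time $\asymp n^{1/3}$ on the diagonal, while a diagonal-hugging near-optimal alternate can have local time as large as $\Theta(n)$ but incurs a weight deficit from optimality whose law has density of order $n^{-1/3}$ near zero (by KPZ-type one-point fluctuation estimates). For the reinforced geodesic to switch to such an alternate with excess local time $\ell$, the reinforcement bonus $\sim\varepsilon\ell$ must exceed this deficit. A single switch is therefore polynomially rare in $\varepsilon$; the task is to control the whole hierarchy of near-optimal alternates simultaneously across dyadic scales $n_k = 2^k$ up to $n$.

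Two new ingredients drive the induction at each scale: (i) a local-time tail bound uniform over near-geodesics, of the qualitative form
\[
\P\!\Big[\exists\,\gamma:\; w^{(0)}(\gamma)\geq T_{n_k}-\delta,\ L(\gamma)\geq t\,n_k^{1/3}\Big] \;\leq\; (\text{polynomial in }\delta)\cdot(\text{fast decay in } t),
\]
and (ii) a bound on the effective number of essentially distinct near-optimal geodesics with weight deficit at most $\delta$. Combining (i) and (ii) with the one-point density estimate yields a polynomial-in-$\varepsilon$ tail on any reinforcement-driven switch at scale $n_k$. The inductive step then propagates \emph{tail} estimates for $L(\gamma^{(\varepsilon)})$ forward across scales, since the reinforced geodesic at scale $n_k$ is assembled from reinforced pieces at smaller scales and only tail control is stable under such composition.

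The main obstacle is to make the local-time tail (i) and the near-geodesic counting (ii) sharp enough that the polynomial-in-$\varepsilon$ savings at each scale survive the union bound over the $O(\log n)$ scales and accumulate to any prescribed exponent $C$. The tail in $t$ must decay fast enough to absorb the growth of the near-optimal family in $\delta$, and crucially must be uniform over the entire family rather than just the single optimal geodesic—this is the genuinely new analytic input needed beyond what is available for the unreinforced geodesic. Once these estimates are established, the inductive closure is a concrete comparison argument and the theorem follows by letting $n\to\infty$.
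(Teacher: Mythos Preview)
Your high-level architecture matches the paper's: the two analytic inputs you name are exactly what the paper proves---a uniform local-time tail for all near-geodesics (Section~3, Theorems~3.1--3.2) and a bound on the number of essentially distinct near-optimal paths between a pair of endpoints (Proposition~3.7, proved via RSK and random-walk-bridge estimates in Section~5). The multiscale induction on \emph{tail} bounds, with the reinforced geodesic at each scale shown to be an unreinforced near-geodesic so that (i) applies, is precisely the mechanism of Theorem~4.1.

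There is, however, a concrete gap in how you close the induction. With dyadic scales $n_k=2^k$ there is no obvious source of a polynomial-in-$\varepsilon$ gain per step: the local-time bound (i) says a near-geodesic across $2n$ hits $O(1)$ barriers of width $n^{2/3}$, an $O(1)$ multiplicative factor, not $\varepsilon^\alpha$. The paper's induction does not ``accumulate $\varepsilon$-savings over $\log n$ scales''; instead it takes $\varepsilon$-dependent scales $n_k=\varepsilon^{-k/200}$ with ratio $L=n_{k+1}/n_k=\varepsilon^{-1/200}$, so that a near-geodesic at scale $n_{k+1}$ hits only $\sim L^{1/3}$ of the $L$ blocks. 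The invariant $Z_n^\varepsilon\le \varepsilon^{1/3}n_k^{1/3}(\log n_k)^{50k}$ is \emph{maintained}, not improved, from one scale to the next; the final $\Xi(\varepsilon)\le \varepsilon^{k/600}$ then comes directly from $n_k^{-1/3}$ for each fixed $k$ (Theorem~4.4). Your sentence ``savings at each scale survive the union bound over $O(\log n)$ scales'' does not match any mechanism that I can see working.

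A smaller issue: your memoryless reduction to $\E L(\gamma^{(\varepsilon)})$ is not correct as stated. Conditioning on $(i,i)\in\gamma^{(\varepsilon)}$, given the rest of the field, is equivalent to $X_{i,i}>t$ for a \emph{random} threshold $t$, and the conditional mean is $t+1$, not $O(1)$. The paper avoids this entirely by working directly with $Z_n^\varepsilon=T_n^\varepsilon-T_n$ (never passing through $L(\gamma^{(\varepsilon)})$) and using the Bernoulli coupling $\xi^\varepsilon=\xi+\varrho\xi'$, $\varrho\sim\mathrm{Ber}(\varepsilon)$, which makes the base case of the induction an honest Chernoff bound.
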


This, in part, explains the difficulty in resolving the value of $\varepsilon_c$ numerically.  Tending to $0$ so quickly, it cannot be easily distinguished from a positive $\varepsilon_c$, particularly given $n$ of order ten thousand. 

We note the heuristics of Lebowitz~\cite{CLST} suggest that $\Xi(\varepsilon)$ is in fact of order $e^{-c\varepsilon^{-1}}$.
We expect that our method, which uses an induction-based multi-scale analysis (and is to be explained shortly), can be further pushed to yield $\Xi(\varepsilon)\leq e^{-(\log(\e^{-1}))^C}$ for some $C>1$; although for simplicity of the arguments we choose not to pursue that. However, to establish or refute the suggested order of $e^{-c\varepsilon^{-1}}$, some further ideas would be needed.

\subsection{Proof Sketch}  \label{sec:psk}

In Exponential LPP, without reinforcement along the diagonal the transversal fluctuations of the geodesic scale like $n^{2/3}$ (\cite{Jo00trans}).  It is, therefore, natural to expect then that the optimal geodesic spends about $n^{1/3}$ time on the diagonal.  We establish such a local time result  together with exponential tail bounds.  Thus, reinforcing the diagonal increases the original geodesic by order $\e n^{1/3}$.  The fluctuations in the passage time themselves are also of order $n^{1/3}$ and Tracy-Widom distributed and so are of the same magnitude.  Our proof rests on a comparison between the benefit of reinforcement accumulated on a series of smaller scales and the difference in passage times between geodesics and near geodesics.

On a rectangle, parallel to the diagonal, of length $n$ and height $n^{2/3}$, there are $n^{4/3}$ geodesics joining pairs of points on the left and right sides. These have a strong tendency to coalesce together forming highways from left to right; and in the middle third of the rectangle only $O(1)$ distinct geodesics remain which was established in~\cite{BHS} to show that there are no non-trivial infinite bigeodesics.  The proof makes use of the geometric fact that geodesics cannot cross each other twice and so a large number of distinct geodesics implies the existence of many good non-crossing disjoint paths.  Many such paths can be ruled out using a combination of the BK inequality with an entropy argument controlling the number of non-overlapping paths.

Since there are few highways and their placement is random, they are unlikely to spend much time close to the diagonal and benefit from reinforcement.  However, reinforcement may make another route preferable so we also need to consider the locations of \emph{near geodesics}, i.e., up-right paths close in passage time to the optimal ones.  We cannot perform the same geometric reduction since near geodesics can cross each other multiple times.  Instead, we discretize space and for each pair of starting and ending points rank the near geodesics.  The rank-$k$ best geodesics for each pair of starting and ending points have the property that they cannot cross twice and so we can again apply the approach of~\cite{BHS}.

This reduces our task to showing that, at an appropriate level of discretization, between a pair of starting and ending points, there are few near geodesics.  If we consider the passage time of the best geodesic that passes through a given point on an anti-diagonal parallel to the side of the rectangle, this scales asymptotically to a sum of two Airy processes and is thus locally Brownian.  As with Brownian motion, it is unlikely to have many well separated almost-maxima on an interval. By the Robinson-Schensted-Knuth (RSK) correspondence, this process is in fact a sum of two random walk bridges, conditioned to lie above another stochastic process (see e.g. \cite{DNV,O03} for this in slightly different settings).  Using random walk estimates and a comparison with Brownian motion we control the tails of the number of near-maxima.  

Altogether we show that there are only $O(1)$ near geodesics in the middle portion of the rectangle.  By considering translations of the field (which also translate the geodesics), we argue that in expectation these near geodesics spend only $n^{1/3}$ time close to the diagonal.  Moreover, the locations are essentially local and we establish enough independence to show corresponding tail bounds via a multi-scale proof.

The final step in the proof of Theorem~\ref{t:main} is a multi-scale induction controlling the increase in the passage time from reinforcement.  On a series of scales $n_k=\e^{-k/200}$ we show that with very high probability, the increase from reinforcement is at most $\e^{1/3} n_k^{1/3} (\log(n_k))^{50k}$.  Then at scale $k+1$, the inductive hypothesis shows that after reinforcement the new geodesic must be close to an original near geodesics.  Having shown that the  original near geodesics do not spend too much time close to the diagonal we can control the increase in the passage time from reinforcement at level $k+1$.  This induction will fail for some large enough $k$ (the slow bond results of~\cite{BSS14} guarantee this) but we show that for each fixed $k$ it will   hold provided $\e$ is small enough.  The bounds can then be used to establish Theorem~\ref{t:main} directly.

\subsection*{Organisation of the paper}
The rest of the paper is organised as follows. In Section \ref{s:prelim} we setup notations for the model of  Exponential LPP, and record some useful results (of the unperturbed LPP) from the literature. 
In Section \ref{s:loc} we prove the estimate of the time spent by geodesics near the diagonal. There we use an estimate on the number of near geodesics between a pair of points, whose proof is the main content of Section \ref{s:multi}.
The multi-scale proof of the main result is given in Section \ref{s:main}.

\subsection*{Acknowledgements}
The authors thank Riddhipratim Basu for many useful discussions.
The authors would also like to thank the anonymous referees for carefully reading this manuscript and providing valuable feedback that helped improve the exposition.
AS was supported by NSF grants DMS-1855527 and DMS-1749103, a Simons Investigator grant, and a MacArthur Fellowship.

\section{Notation and Preliminaries}  \label{s:prelim}

We now formally setup the model of Exponential LPP.
To each vertex $v \in \Z^2$ we associate an independent weight $\xi(v)$ with $\Exp(1)$ distribution.
For two points $u, v \in \Z^2$, we say $u\preceq v$ if $u$ is coordinate-wise less than or equal to $v$.
For such $u, v$ and any up-right path $\gamma$ from $u$ to $v$, we define the \emph{passage time of the path} to be
\[
T(\gamma) := \sum_{w \in \gamma} \xi(w) .
\]
Then almost surely there is a unique up-right path from $u$ to $v$ that has the largest passage time.
We call this path the \emph{geodesic} $\Gamma_{u,v}$, and call $T_{u,v}:=T(\Gamma_{u,v})$ the \emph{passage time from $u$ to $v$}.
An up-right path $\gamma$ from $u$ to $v$ is called an \emph{$x$-near geodesic}, if $T(\gamma)\ge T_{u,v}-x$.
For each $n\in \Z$ we denote $\bn=(n,n)\in\Z^2$; in particular we have $\boo=(0,0)$.
For $n\in \N$ we let $T_n=T_{\boo,\bn}$.

The TASEP on $\Z$ is mapped to this Exponential LPP in the following way. 
Consider the step initial condition, where each non-positive site is occupied by a particle, and each positive site is empty.
For each $x, y \in \Z_{\ge 0}$, we let $\xi((x,y))$ be the waiting time for the particle initially at site $-x$ to make its $y+1$-th jump, after it has made the previous jump and the site right next to it (which is site $y-x+1$) becomes empty.
Note that these waiting times would be i.i.d. $\Exp(1)$, from the model definition of TASEP.
Then via a recursive relation, $T_{\boo,(x,y)}$ would be the total time till the particle initially at site $-x$ makes the $y+1$-th jump. 

In the slow bond model, each jump cross the edge $0-1$ is done at a slower rate of $1-\e$.
Thus for this perturbed model, the corresponding LPP would be on the field $\{\xi^\e(v)\}_{v\in\Z^2}$, where all the weights $\xi^\e(v)$ are independent, and $\xi^\e((x,y))$ has distribution $\Exp(1)$ when $x\neq y$, or $\Exp(1-\e)$ (with rate $1-\e$ and mean $(1-\e)^{-1}>1$) when $x=y$.
We shall call this line $x=y$ \emph{the diagonal}.
We couple $\{\xi^\e(v)\}_{v\in\Z^2}$ with $\{\xi(v)\}_{v\in\Z^2}$ as follows. For each $v$ not on the diagonal, we let $\xi^\e(v)=\xi(v)$; and for each $v$ on the diagonal, we let $\xi^\e(v)=\xi(v)+\varrho(v)\xi'(v)$, where $\varrho(v)$ is a Bernoulli$(\e)$ random variable and $\xi'(v)$ is an $\Exp(1-\e)$ random variable with mean $(1-\e)^{-1}=1+\Theta(\e)$, independent of each other and both are independent of $\xi(v)$.

For points $u\preceq v$, we let $\Gamma^\e_{u,v}$ and $T^\e_{u,v}$ be the geodesic and passage time from $u$ to $v$, under the reinforced field $\{\xi^\e(w)\}_{w\in\Z^2}$.
For $n\in \N$ we also denote $T^\e_n=T^\e_{\boo,\bn}$.

We also set up the following useful notations.
For any $u=(x,y)\in \Z^2$, we denote $d(u)=x+y$, and $ad(u)=x-y$.
For each $n\in\Z$ we denote $\L_n=\{u\in\Z^2: d(u)=n\}$.
We shall use the notation $\llbracket \cdot, \cdot \rrbracket$ to denote discrete intervals, i.e., $\llbracket a,b \rrbracket$ will denote $[a,b]\cap \Z$. 

For merely avoiding the notational overhead of integer parts, we would ignore some rounding issues.
For example, we shall often assume, without loss of generality, that fractional powers of integers i.e., $k^{2/3}$ or rational multiples of integers as integers themselves. It is easy to check that such assumptions do not affect the proofs in any substantial way.

\subsection{Results on the unperturbed LPP}

We record some useful results from the literature, on the LPP on the original i.i.d. $\Exp(1)$ field.

For the last passage time from $u$ to $v$, we have the following one point estimates by the connection with random matrices.
For any $m,n \in \N$, $T_{\mathbf{0},(m,n)}$ has the same law as the largest eigenvalue of $X^*X$ where $X$ is an $(m+1)\times (n+1)$ matrix of i.i.d.\ standard complex Gaussian entries (see e.g. \cite[Proposition 1.4]{Jo99}).
Using this we get the following one point estimates from \cite[Theorem 2]{LR10}. 

Here and for the rest of the text, for each $m,n\in\Z_{\ge 0}$ we denote $D_{(m,n)}=(\sqrt{m}+\sqrt{n})^2$.
\begin{theorem}
\label{t:onepoint}
For each $\psi>1$, there exist $C,c>0$ depending on $\psi$ such that for all $m,n\in \N$ with $\psi^{-1}<\frac{m}{n}< \psi$ and all $x>0$ we have:
\begin{enumerate}
\item[(i)] $\P[T_{\mathbf{0}, (m,n)}-D_{(m,n)} \geq xn^{1/3}] \leq Ce^{-c\min\{x^{3/2},xn^{1/3}\}}$.
\item[(ii)] $\P[T_{\mathbf{0}, (m,n)}-D_{(m,n)} \leq -xn^{1/3}] \leq Ce^{-cx^3}$.
\item[(iii)] $|\E[T_{\mathbf{0}, (m,n)}] -D_{(m,n)}|\leq Cn^{1/3}$.
\end{enumerate}
For any $m\geq n\in \N$ and for all $x>0$, we have
\begin{enumerate}
\item[(iv)] $\P[T_{\mathbf{0}, (m,n)}-D_{(m,n)} \geq xm^{1/2}n^{-1/6}] \leq Ce^{-cx}$.
\end{enumerate}
\end{theorem}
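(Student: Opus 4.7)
The plan is to exploit the random matrix connection noted just above the statement: by Johansson's identity, $T_{\boo,(m,n)}$ has the same distribution as the largest eigenvalue $\lambda_{\max}(X^*X)$ of a complex Wishart matrix, where $X$ is $(m+1)\times(n+1)$ with i.i.d.\ standard complex Gaussian entries. The eigenvalues of $X^*X$ form a determinantal point process (the Laguerre unitary ensemble, LUE) whose correlation kernel admits an explicit expression in terms of Laguerre polynomials, and the bulk of the proof consists of quantitative asymptotic analysis of this kernel at the soft edge $D_{(m,n)}$, uniform in $m/n\in(\psi^{-1},\psi)$.

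For the upper bound (i), I would split into two regimes. In the moderate-deviation regime $x\le n^{2/3}$, write $\P(\lambda_{\max}\ge D_{(m,n)}+xn^{1/3})$ as a Fredholm determinant of the LUE kernel on $[D_{(m,n)}+xn^{1/3},\infty)$ and dominate it by the first-order trace-norm bound. Under the $n^{1/3}$ soft-edge scaling the LUE kernel converges to the Airy kernel, and the pre-limit analogue of the Tracy--Widom right tail yields the $e^{-cx^{3/2}}$ rate. For the genuinely large-deviation regime $x\ge n^{2/3}$ the soft-edge approximation breaks down, but a robust concentration argument suffices: the map $X\mapsto \|X\|=\sqrt{\lambda_{\max}(X^*X)}$ is $1$-Lipschitz with respect to the Hilbert--Schmidt norm, so Borell--TIS gives sub-Gaussian concentration of $\|X\|$ about its median (which sits at $\sqrt{D_{(m,n)}}+O(n^{-1/6})$), and squaring produces the claimed $e^{-cxn^{1/3}}$ bound in this regime. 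Estimate (iv) is handled by the same toolbox, adapted to the rectangular geometry where the fluctuation scale becomes $m^{1/2}n^{-1/6}$ and the trace-norm bound on the LUE kernel produces exponential decay at rate $e^{-cx}$.

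For the lower tail (ii) I would again use the determinantal structure: the event $\{\lambda_{\max}\le D_{(m,n)}-xn^{1/3}\}$ is the gap-probability event that the LUE point process has no points in $[D_{(m,n)}-xn^{1/3},\infty)$, which equals a Fredholm determinant of the kernel on that interval. Soft concentration arguments cannot produce the correct $x^3$ rate; instead, the cube arises from determinantal eigenvalue repulsion pushing the whole bulk inward, captured by the pre-limit Tracy--Widom left tail. Estimate (iii) then follows immediately by integrating the two-sided tails from (i) and (ii). The main obstacle is the quantitative step converting the asymptotic Airy-kernel limit into \emph{pre-limit} tail bounds with explicit constants $C,c$ that are uniform in $m/n\in(\psi^{-1},\psi)$; this requires uniform Plancherel--Rotach-type asymptotics for the Laguerre polynomials and careful trace-norm estimates for the LUE kernel at the edge, which is precisely the content of the argument carried out in \cite{LR10}.
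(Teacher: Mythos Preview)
The paper does not prove Theorem~\ref{t:onepoint}: it is quoted directly from \cite[Theorem 2]{LR10} after invoking Johansson's identification of $T_{\boo,(m,n)}$ with the top eigenvalue of a complex Wishart matrix. There is therefore no ``paper's own proof'' to compare against; the authors treat this as a black-box input.

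Your sketch is a faithful outline of how \cite{LR10} actually establishes these bounds --- the determinantal LUE structure, soft-edge kernel asymptotics for the moderate-deviation regime, Gaussian concentration of the operator norm for the genuinely large-deviation regime, and integrating the two-sided tails to get (iii). You correctly identify the main technical content (uniform Plancherel--Rotach asymptotics) and correctly attribute it to \cite{LR10} yourself at the end. So your proposal and the paper are in agreement: both defer to \cite{LR10}, you just spell out more of what that reference contains.
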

We shall next quote a result about last passage times across parallelograms. These were proved in \cite{BSS14} for Poissonian LPP (see Proposition 10.1, Proposition 10.5 and Proposition 12.2 in \cite{BSS14}). A proof for the exponential setting can be found in 
\cite[Appendix C]{basu2019temporal}.

Consider the parallelogram $U$ whose one pair of sides lie on $\L_0$ and $\L_{2n}$ with length $2n^{2/3}$ and midpoints $(mn^{2/3},-mn^{2/3})$ and $(n,n)$ respectively. 
Let $U_1$ (resp.\ $U_2$) denote the intersections of $U$ with the strips $\{u:0\leq d(u)\leq 2n/3\}$ and $\{u:4n/3\leq d(u)\leq 2n\}$ respectively.
\begin{theorem}
\label{t:supinf}
For each $\psi<1$, there exists $C,c>0$ depending only on $\psi$ such that for all $|m|<\psi n^{1/3}$ and $U$ as above we have
\begin{enumerate}
\item[(i)] 
for all $x, L>0$ and $n$ sufficiently large depending on $L$,
$$\P\left[ \inf_{u,v\in U: d(v)-d(u)\geq \frac{n}{L}}  (T_{u,v}-D_{v-u}) \leq -xn^{1/3}\right]\leq Ce^{-cx^3}.$$
\item[(ii)]
for all $x>0$ and $n\geq 1$,
$$\P\left[ \sup_{u\in U_1,v\in U_2}  (T_{u,v}-D_{v-u}) \geq xn^{1/3}\right]\leq Ce^{-c\min\{x^{3/2},xn^{1/3}\}}.$$
\end{enumerate}
\end{theorem}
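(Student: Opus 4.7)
The plan is the standard \emph{grid-discretization plus monotonicity} strategy. The key monotonicity is that if $u' \preceq u \preceq v \preceq v'$ then $T_{u',v'} \geq T_{u,v}$, since any up-right path from $u$ to $v$ extends (through non-negative weights) to one from $u'$ to $v'$. This lets us sandwich every passage time $T_{u,v}$ with $u,v \in U$ between passage times at nearby lattice grid points, reducing both statements to a union bound over polynomially many grid pairs where the one-point estimates of Theorem~\ref{t:onepoint} apply directly.

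For part (i), slice the parallelogram by the anti-diagonals $\L_{jn/M}$ for $j=0,\dots,M$ with $M=10L$, and on each slice place a grid of $K=\lceil n^{1/3}\rceil$ equally spaced lattice points. For $u,v \in U$ with $d(v)-d(u) \geq n/L$, select grid points $\tilde u,\tilde v$ with $u \preceq \tilde u \preceq \tilde v \preceq v$ on the two enclosed grid anti-diagonals, so that $d(\tilde v)-d(\tilde u) \geq n/(2L)$ and $T_{u,v} \geq T_{\tilde u,\tilde v}$. A Taylor expansion of $D_{(p,q)} = (\sqrt{p}+\sqrt{q})^2$ under a transverse shift of size $\delta$ gives leading perturbation of order $\delta\cdot(p-q)/\sqrt{pq} = O(\delta\cdot m/n^{1/3})$; with $\delta = O(n^{2/3}/K) = O(n^{1/3})$ and $|m|<\psi n^{1/3}$ this yields $|D_{v-u}-D_{\tilde v-\tilde u}| = O(n^{1/3})$. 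Theorem~\ref{t:onepoint}(ii)--(iii) applied to each of the $\mathrm{poly}(n,L)$ grid pairs yields $T_{\tilde u,\tilde v}-D_{\tilde v-\tilde u} \geq -\tfrac{x}{2}n^{1/3}$ with failure probability $Ce^{-cx^3}$, and a union bound preserves the $e^{-cx^3}$ tail up to constants.

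Part (ii) is parallel: discretize $U_1$ and $U_2$ with the same grid of mesh $O(n^{1/3})$, and for each $u \in U_1, v \in U_2$ pick grid points $u' \preceq u$ and $v \preceq v'$ in a slightly inflated parallelogram $U' \supset U$. Monotonicity gives $T_{u,v} \leq T_{u',v'}$, the same Taylor bound gives $D_{v'-u'}-D_{v-u} = O(n^{1/3})$, and Theorem~\ref{t:onepoint}(i) combined with a union bound over $O(n^{4/3})$ grid pairs delivers the claimed $e^{-c\min(x^{3/2},xn^{1/3})}$ tail.

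The main technical point is the interplay between grid mesh and $D$-interpolation: because the transverse derivative of $D_{v-u}$ is comparable to $m/n^{1/3}$ and $|m|$ is allowed up to order $n^{1/3}$, the grid must have mesh $O(n^{1/3})$ in the transverse direction, i.e.\ $O(n^{1/3})$ points per anti-diagonal slice and polynomially many pairs in total. The argument succeeds nonetheless because the one-point tails of Theorem~\ref{t:onepoint} are strong enough that polynomial-in-$n$ prefactors are absorbed without degrading the exponent in $x$; the only additional bookkeeping is ensuring that the sandwiching grid points $\tilde u \preceq \tilde v$ (respectively $u' \preceq v'$) can always be chosen consistently with the slant of $U$, which is arranged by a slight transverse enlargement of the parallelogram before gridding.
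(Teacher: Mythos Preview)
The paper does not give its own proof of this theorem; it is quoted from \cite{BSS14} (Propositions~10.1, 10.5, 12.2) and \cite[Appendix~C]{basu2019temporal}. Your grid-plus-monotonicity framework is indeed the strategy used in those references, but the sketch as written has a real gap.

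In part~(i) you take anti-diagonal slices with spacing $2n/M$ where $M=10L$, so when you pass from $(u,v)$ to the sandwiched grid pair $(\tilde u,\tilde v)$ the longitudinal displacement $d(v-u)-d(\tilde v-\tilde u)$ can be as large as $\Theta(n/L)$. Since $D_{(p,q)}=p+q+2\sqrt{pq}$ has $\partial D/\partial d\approx 2$, this alone contributes $\Theta(n/L)$ to $D_{v-u}-D_{\tilde v-\tilde u}$, which dominates the $O(n^{1/3})$ you claim. Your Taylor expansion only tracks the \emph{transverse} shift; the longitudinal term is simply missing.

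Repairing this by refining the $d$-mesh to $O(n^{1/3})$ makes the number of grid pairs polynomial in $n$, and then the assertion that ``polynomial-in-$n$ prefactors are absorbed without degrading the exponent in $x$'' fails: one cannot have $n^{a}e^{-cx^{3}}\le C'e^{-c'x^{3}}$ with $C',c'$ independent of $n$ for all $x>0$ (test $x\sim(\log n)^{1/3}$, where the left side is of order $1$ but the right side is $n^{-c''}$). The cited proofs avoid both problems by keeping only $O_{\psi,L}(1)$ grid points but lower-bounding $T_{u,v}$ through the concatenation $T_{u,v}\ge T_{u,\tilde u}+T_{\tilde u,\tilde v}+T_{\tilde v,v}$ and controlling the short pieces $T_{u,\tilde u}$, $T_{\tilde v,v}$ at their own scale; the super-additivity defect of $D$ along nearly collinear pieces is then genuinely $O(n^{1/3})$. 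This two-scale (or iterated) comparison, rather than a single-scale union bound, is the missing ingredient.
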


For each $n\in \N$, consider the passage times from $\boo$ to the line $\L_n$.
This point-to-line profile is known to have a scaling limit being the Airy$_2$ process, which locally looks like Brownian motion with modulus of continuity in the square root order.
Below is a quantitative estimate on the continuity of the point-to-line profile. Similar results have appeared as \cite[Theorem 3]{BG21} and also in \cite{Ham16} in the setting of Brownian last passage percolation.
\begin{lemma}  \label{lem:lpp-devi}
For any $\psi\in(0,1)$ there are $c,C>0$ such that the following is true.
For $n,a,b\in\N$ with $\psi n < a< b < (1-\psi)n$, there is 
\[
\P[|(T_{\boo,(a,n-a)} - D_{(a,n-a)} )-(T_{\boo,(b,n-b)}- D_{(b,n-b)})| > (\log(n))^7 \sqrt{b-a}] < Ce^{-c(\log(n))^2}
\]
\end{lemma}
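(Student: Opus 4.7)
The plan is to split into two cases based on the size of $\ell := b - a$ relative to $n^{2/3}$, the natural transversal fluctuation scale of geodesics from $\boo$ to $\L_n$. Let $\phi(x) := T_{\boo,(x,n-x)} - D_{(x,n-x)}$ denote the centered point-to-line profile, and write $p_a := (a, n-a)$, $p_b := (b, n-b)$.

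\textbf{Case 1: $\ell \geq n^{2/3}$.} Here $\sqrt{\ell} \geq n^{1/3}$, so the desired bound $(\log n)^7 \sqrt{\ell}$ already dominates the typical one-point fluctuation. I would apply Theorem~\ref{t:onepoint}(i),(ii) separately to $\phi(a)$ and $\phi(b)$ with $x = (\log n)^{4/3}$ (so that $x^{3/2} = (\log n)^2$ and also $x^3 \geq (\log n)^2$), giving $|\phi(a)|, |\phi(b)| \leq (\log n)^{4/3} n^{1/3}$ each with probability at least $1 - Ce^{-c(\log n)^2}$, and conclude by the triangle inequality.

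\textbf{Case 2: $\ell < n^{2/3}$.} Set $m := \ell^{3/2}$. At this scale the transversal fluctuation $m^{2/3}$ of a geodesic of length $m$ matches $\ell$, and the one-point fluctuation $m^{1/3}$ matches $\sqrt{\ell}$. Let $w^*$ denote the crossing of the geodesic $\Gamma_{\boo, p_a}$ at the intermediate line $\L_{n-m}$. A standard transversal fluctuation estimate, derivable from Theorem~\ref{t:onepoint} and Theorem~\ref{t:supinf} via a barrier argument, yields that with probability $1 - Ce^{-c(\log n)^2}$ the point $w^*$ lies within $K m^{2/3}$ of the straight-line crossing $w^0 := ((n-m)a/n, (n-m)(n-a)/n)$ for some $K = (\log n)^{O(1)}$; in particular $w^* \preceq p_b$. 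Using $T_{\boo, p_a} = T_{\boo, w^*} + T_{w^*, p_a}$ and $T_{\boo, p_b} \geq T_{\boo, w^*} + T_{w^*, p_b}$ and subtracting, I obtain
\[
\phi(a) - \phi(b) \leq \bigl[T_{w^*, p_a} - D_{p_a - w^*}\bigr] - \bigl[T_{w^*, p_b} - D_{p_b - w^*}\bigr] + \bigl[D_{p_a - w^*} - D_{p_a} - D_{p_b - w^*} + D_{p_b}\bigr].
\]
The first two (random) terms are each bounded in absolute value by $(\log n)^{4/3} m^{1/3} = (\log n)^{4/3}\sqrt{\ell}$ via Theorem~\ref{t:supinf}(i),(ii), applied (by translation invariance of the field) to a parallelogram between $\L_{n-m}$ and $\L_n$ of transversal width $2 K m^{2/3}$ that contains $p_a$, $p_b$, and all likely crossings at $\L_{n-m}$. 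The third (deterministic) term is a parabolic curvature correction: a direct Taylor expansion of $(y, z) \mapsto (\sqrt{y} + \sqrt{z})^2$ around $(am/n, (n-a)m/n)$ shows that the leading $O(\ell)$ contributions from $D_{p_a - w^*} - D_{p_a}$ and $D_{p_b - w^*} - D_{p_b}$ cancel when $w^* = w^0$, so the residual from the displacement $|w^* - w^0| \leq K m^{2/3}$ is of order $K\ell/m^{1/3} = K\sqrt{\ell}$. The matching lower bound on $\phi(a) - \phi(b)$ is obtained symmetrically, taking $w^*$ to be the crossing of $\Gamma_{\boo, p_b}$ at $\L_{n-m}$.

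The main obstacle is bookkeeping the polylog factors so that $(\log n)^7$ absorbs the $(\log n)^{4/3}$ from the parallelogram one-point bounds, the $K = (\log n)^{O(1)}$ from the transversal window, and the $O(K)$ from the curvature calculation, all at tail probability $e^{-c(\log n)^2}$. A secondary technical point is that Theorem~\ref{t:supinf} is stated for a parallelogram with sides on $\L_0$ and $\L_{2n}$ of width $2n^{2/3}$, whereas I need one between $\L_{n-m}$ and $\L_n$ of width $2K m^{2/3}$; this is handled by translation invariance together with a union bound over $O(K)$ sub-parallelograms of width $2m^{2/3}$, which only costs a further $\log K$ factor inside the exponent and is easily absorbed.
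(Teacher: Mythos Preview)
Your approach is the same as the paper's: split off the easy large-$\ell$ case via one-point bounds, then for small $\ell$ choose an intermediate antidiagonal, localize the geodesic crossing by a transversal fluctuation estimate, and compare the two short passage times plus a deterministic curvature correction. The paper cites Lemma~\ref{lemma:trans-fluc-one-pt} for the transversal fluctuation and Theorem~\ref{t:onepoint} with a union bound over crossing points for the short passage times; your appeal to Theorem~\ref{t:supinf} is slightly imprecise (part (ii) is stated for $u\in U_1$, $v\in U_2$, not full side to side) but this is cosmetic.

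There is, however, a real gap in your choice $m=\ell^{3/2}$. The crossing $w^*$ of $\Gamma_{\boo,p_a}$ automatically has first coordinate $\le a<b$, but the condition $w^*\preceq p_b$ also requires second coordinate $\le n-b$, i.e.\ $w_1^*\ge b-m$. From $w_1^*\ge w_1^0-Km^{2/3}$ with $w_1^0=a(n-m)/n$ this reads $m(n-a)/n\ge(K+1)\ell$; with $m=\ell^{3/2}$ and $n-a\ge\psi n$ it forces $\ell\gtrsim K^2/\psi^2\asymp(\log n)^4$. For $\ell$ below this polylog threshold the branching inequality $T_{\boo,p_b}\ge T_{\boo,w^*}+T_{w^*,p_b}$ is unavailable and the argument collapses. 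The paper takes $n-n'=(\log n)^7\ell^{3/2}$ precisely to cure this: the extra polylog factor makes $(n-a)(n-n')/n$ dominate both $\ell$ and the transversal window $(\log n)^2(n-n')^{2/3}$ for every $\ell\ge 1$, and then the random and curvature contributions come out to $C(\log n)^{19/3}\sqrt{\ell}$, fitting under $(\log n)^7\sqrt{\ell}$. Once you enlarge $m$ in this way your outline goes through.
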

We remark that the exponents of the logarithm factors are not sharp, but would suffice for our use cases.
To prove this, we need an estimate on the fluctuation of geodesics, near the end points.
\begin{lemma}   \label{lemma:trans-fluc-one-pt}
For each $\psi\in(0,1)$, there exist constants $C,c>0$ such that the following is true.
For any $n'<n \in \N$ large enough, $b, b',b^*\in \Z$, such that $\psi n<b<(1-\psi)n$, and $b'$ is the largest integer with $0<\frac{b'}{n'-b'}\le\frac{b}{n-b}$, and $(b^*,n'-b^*)$ is the intersection of $\Gamma_{\boo,(b,n-b)}$ with $\L_{n'}$.
Then we have $\P[|b'-b^*|>x{n'}^{2/3}] < Ce^{-cx}$ for any $x>0$.
\end{lemma}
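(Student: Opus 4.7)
The plan is a standard transversal-fluctuation argument via dyadic peeling. By enlarging $C$ we may assume $x$ is larger than a fixed constant. For definiteness we bound $\P[b^*-b'\geq x(n')^{2/3}]$, the case $b^*-b'\leq -x(n')^{2/3}$ being analogous; and we focus on $n'\leq n/2$, since the case $n'>n/2$ admits an analogous argument at the smaller fluctuation scale $(n-n')^{2/3}\leq (n')^{2/3}$, which only makes the bound easier. Writing $u(s)=(b'+s,\,n'-b'-s)\in\L_{n'}$ and $u_0=u(0)$, the key deterministic input is the quadratic deficit of the shape function at the characteristic direction: since $u_0$ lies on the straight line from $\boo$ to $(b,n-b)$ up to rounding, a Taylor expansion of $D_{(m,n)}=(\sqrt m+\sqrt n)^2$ at $u_0$ has vanishing first-order term and gives
\[
\Delta(s) := D_{\boo,u_0}+D_{u_0,(b,n-b)} - D_{\boo,u(s)}-D_{u(s),(b,n-b)} \;\geq\; c_\psi\, s^2/n'
\]
for $|s|\leq c'_\psi n'$, with constants depending only on $\psi$.

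If the geodesic from $\boo$ to $(b,n-b)$ crosses $\L_{n'}$ at $u(s^*)$, optimality forces $T_{\boo,u(s^*)}+T_{u(s^*),(b,n-b)}\geq T_{\boo,u_0}+T_{u_0,(b,n-b)}-\xi(u_0)$, which after subtracting the $D$-values becomes
\[
\widehat F(s^*)+\widehat G(s^*) \geq \widehat F(0)+\widehat G(0)+\Delta(s^*)-\xi(u_0),
\]
where $\widehat F(s):=T_{\boo,u(s)}-D_{\boo,u(s)}$ and $\widehat G(s):=T_{u(s),(b,n-b)}-D_{u(s),(b,n-b)}$. We partition $\{s\geq x(n')^{2/3}\}$ into dyadic shells $[s_k,s_{k+1})$ with $s_k=2^k x(n')^{2/3}$, plus a tail shell $\{s\geq c'_\psi n'\}$ handled separately using a linear-in-$s$ deficit. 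On the $k$-th shell the deficit is at least $c_\psi(2^k x)^2(n')^{1/3}$, so the event is contained in the union of: (a) $\widehat F(0)+\widehat G(0)$ deviates downward by a constant fraction of this, controlled by Theorem \ref{t:onepoint}(ii) at scales $(n')^{1/3}$ and $(n-n')^{1/3}$ and contributing $\exp(-c(2^k x)^6)$; (b) the supremum of $\widehat F$ over the shell deviates upward by the same amount; (c) the analogous event for $\widehat G$; and (d) $\xi(u_0)$ is large, controlled by its exponential tail.

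For (b) and (c) we apply Theorem \ref{t:supinf}(ii) to parallelograms aligned with the half-characteristic directions $\boo\to u_0$ and $u_0\to(b,n-b)$, with sides on $\L_0,\L_{n'}$ and $\L_{n'},\L_n$ respectively (using translation invariance to choose the midpoint offset $m$ matching the sub-shell's position). The main technical obstacle is that Theorem \ref{t:supinf}(ii) is stated for parallelograms of fixed aspect ratio (width $\sim m^{2/3}$ for length $\sim 2m$), so the $k$-th shell of transverse extent $2^k x(n')^{2/3}$ does not fit into a single parallelogram of scale $n'$ once $2^k x$ exceeds a constant. We handle this by subdividing the shell into $O(2^k x)$ sub-shells of width $\sim(n')^{2/3}$ and applying Theorem \ref{t:supinf}(ii) once per sub-shell, with the midpoint-offset parameter adjusted within the admissible range $|m|<\psi_0(n'/2)^{1/3}$ for some $\psi_0<1$ (as permitted by $\psi n<b<(1-\psi)n$ together with a small choice of $c'_\psi$). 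This yields total probability $\leq 2^k x\cdot\exp(-c(2^k x)^3)$ for (b)+(c). Summing geometrically over $k$ and combining with (a), (d), and the tail shell produces the claimed bound $Ce^{-cx}$.
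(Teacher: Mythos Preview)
The paper omits the proof entirely, deferring to \cite[Proposition~2.3]{zhang2019optimal}, so there is no in-paper argument to compare against. Your approach---the quadratic loss of the shape function plus dyadic shells---is the standard one and is correct when $n'$ is comparable to $n$. However, there is a genuine gap in your treatment of part (c) when $n'\ll n$.

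For the second-half term $\widehat G(s)=T_{u(s),(b,n-b)}-D_{(b,n-b)-u(s)}$, the relevant length scale is $n-n'$, not $n'$. Applying Theorem~\ref{t:supinf}(ii) to a parallelogram on $\L_{n'},\L_n$ gives
\[
\P\Bigl[\sup_{s\text{ in shell}}\widehat G(s)\geq \tfrac14(2^kx)^2(n')^{1/3}\Bigr]\;\leq\;C\exp\!\bigl(-cy^{3/2}\bigr),\qquad y=(2^kx)^2\,(n')^{1/3}/(n-n')^{1/3},
\]
so the exponent is $c(2^kx)^3\sqrt{n'/(n-n')}$, not $c(2^kx)^3$ as you claim. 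When $n'/n\to 0$ this is $o(1)$ for $x$ in the range $1\ll x\ll (n/n')^{1/4}$, and summing over $k$ does not recover $e^{-cx}$. Your subdivision into width-$(n')^{2/3}$ sub-shells does not help here: the issue is the fluctuation \emph{scale} of the long segment, not the aspect ratio of the parallelogram. The same problem afflicts part (a) for $\widehat G(0)$.

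The usual repair is a dyadic \emph{longitudinal} chaining rather than the transverse one you wrote: first apply your argument at the midpoint $\L_{n/2}$ (where the two half-scales match, so (c) is harmless), concluding that the geodesic meets $\L_{n/2}$ within $x_0(n/2)^{2/3}$ of the characteristic; then recurse on the sub-geodesic from $\boo$ to that intersection point at $\L_{n/4}$, and so on down to $\L_{n'}$. Taking $x_j=x\cdot 2^{\,\delta j}$ for a small $\delta>0$, the displacements $x_j(n/2^j)^{2/3}$ sum geometrically to $O(x(n')^{2/3})$, while the failure probabilities $e^{-cx_j}$ sum to $O(e^{-cx})$. This is essentially the argument in the cited reference.
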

This is a slight generalization of \cite[Proposition 2.3]{zhang2019optimal}, as it is for geodesics in any direction bounded away from the axis directions (whereas \cite[Proposition 2.3]{zhang2019optimal} only concerns geodesics in the $(1,1)$ direction); but the proofs are essentially verbatim, so we omit the details here.
See also \cite[Theorem 3]{basu2019coalescence} for the same result in a slightly different setting.

\begin{proof}[Proof of Lemma \ref{lem:lpp-devi}]
We shall again let $c,C>0$ denote small and large constants depending on $\psi$, and the values can change from line to line.
We also assume that $n$ is large enough and $b-a<(\log(n))^{-5}n^{2/3}$, since otherwise the statement holds obviously or by Theorem \ref{t:onepoint}.
Let $n'=n-\lfloor (\log(n))^7(b-a)^{3/2} \rfloor$. Let $(a^*,n'-a^*)$ be the intersection of $\Gamma_{\boo,(a,n-a)}$ with $\L_{n'}$, and $a'$ be the largest integer  with $0<\frac{a'}{n'-a'}\le \frac{a}{n-a}$.

The general idea is to use Lemma \ref{lemma:trans-fluc-one-pt} to bound $|a'-a^*|$. Then we use the fact that $T_{\boo,(b,n-b)} \ge T_{\boo,(a^*,n'-a^*)} + T_{(a^*+1,n'-a^*),(b,n-b)}$ and  Theorem \ref{t:onepoint} to upper bound $T_{\boo,(a,n-a)} - T_{\boo,(b,n-b)}$.
The lower bound follows similarly.

By Lemma \ref{lemma:trans-fluc-one-pt}, we have $\P[|a'-a^*|>(\log(n))^2(n-n')^{2/3}]<Ce^{-c(\log(n))^2}$.
On the other hand, consider the event $\cE_{devi}'$, where \[|T_{(i+1,n'-i),(b,n-b)}-D_{(b-i-1,n-b-n'+i)}|, |T_{(i+1,n'-i),(a,n-a)}-D_{(a-i-1,n-a-n'+i)}| \le (\log(n))^2(n-n')^{1/3}\]
for any $i\in\Z$ such that $c<\frac{b-i-1}{n-b-n'+i}, \frac{a-i-1}{n-a-n'+i}<C$ (note that this condition holds when $|i-a'|\le (\log(n))^2(n-n')^{2/3}+1$).
By applying Theorem \ref{t:onepoint} (i) and (ii) to each $|T_{(i+1,n'-i),(b,n-b)}-D_{(b-i-1,n-b-n'+i)}|$ and  $|T_{(i+1,n'-i),(a,n-a)}-D_{(a-i-1,n-a-n'+i)}|$, and taking a union bound, we have $\P[\cE_{devi}']>1-Ce^{-c(\log(n))^2}$.

When $|a'-a^*|\le (\log(n))^2(n-n')^{2/3}$ and $\cE_{devi}'$ holds, we have
\[
\begin{split}
& T_{\boo,(a,n-a)}-T_{\boo,(b,n-b)} 
\\
\le & T_{(a^*+1,n-a^*),(a,n-a)} \vee T_{(a^*,n-a^*+1),(a,n-a)} - T_{(a^*+1,n-a^*),(b,n-b)}
\\
\le & 2(\log(n))^2(n-n')^{1/3} + \max_{|i-a'|\le (\log(n))^2(n-n')^{2/3}+1} D_{(b-i-1,n-b-n'+i)} - D_{(a-i-1,n-a-n'+i)}\\
<& 
C(\log(n))^4(n-n')^{1/3} + D_{(b,n-b)} - D_{(a,n-a)}.
\end{split}
\]
Thus we have
\[
\P[(T_{\boo,(b,n-b)}-D_{(b,n-b)})-(T_{\boo,(a,n-a)} - D_{(a,n-a)}) > (\log(n))^7\sqrt{b-a}]<Ce^{-c(\log(n))^2}.
\]
Similarly the same inequality holds when exchanging $a$ and $b$, and then the conclusion follows.
\end{proof}
We also need the following estimate on the transversal fluctuation of near geodesics.
Let $A$ be the segment lying on $\L_0$ with length $2n^{2/3}$ and midpoint $(mn^{2/3}, -mn^{2/3})$, and let $B$ be the segment lying on $\L_{2n}$ with length $2n^{2/3}$ and midpoint $(n, n)$.
Let $U_{m,\phi}$ be the parallelogram, with one pair of sides lie on $\L_0$ and $\L_{2n}$ with length $2\phi n^{2/3}$, and midpoints $(mn^{2/3}, -mn^{2/3})$ and $(n,n)$.
\begin{proposition}[\protect{\cite[Proposition C.8]{basu2019temporal}}]   \label{prop:trans-fluc}
For each $\psi\in(0,1)$, there exists a constant $c>0$ such that the following is true.
Consider the event where there is an up-right path $\gamma$ from some $u\in A$ to $v\in B$, such that $\gamma$ is not contained in $U_{m,\phi}$, and $T(\gamma)>\E[T_{u,v}]-c\phi^2 n^{1/3}$.
Then the probability of this event is at most $e^{-c\phi^3}$,
if $|m| < \psi n^{1/3}$ and $\phi$ is large enough.
\end{proposition}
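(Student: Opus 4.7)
The plan is to control the event by splitting any path $\gamma$ that exits the parallelogram $U_{m,\phi}$ at a boundary crossing point $w$, bounding $T(\gamma) \le T_{u,w} + T_{w,v}$, and then showing that this sum incurs a large deterministic \emph{parabolic loss} that exceeds typical fluctuations except on an event of probability $e^{-c\phi^3}$. Any up-right path $\gamma$ from $u \in A$ to $v \in B$ not contained in $U_{m,\phi}$ must cross one of its two long sides at some $w$ lying on $\L_k$ for some $k \in \llbracket 0, 2n \rrbracket$ and at transversal distance $\phi n^{2/3}$ from the centerline. So it suffices to show that, with probability $1 - e^{-c\phi^3}$, for every such $w$ and every $u \in A$, $v \in B$, we have $T_{u,w} + T_{w,v} < \E[T_{u,v}] - c\phi^2 n^{1/3}$.

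A direct computation with $D_{(a,b)} = a + b + 2\sqrt{ab}$ gives, using the strict concavity of $(a,b)\mapsto\sqrt{ab}$ along anti-diagonals, the deterministic bound
\begin{equation*}
D_{v-u} \;-\; D_{w-u} \;-\; D_{v-w} \;\ge\; c_0\,\phi^2 n^{1/3}
\end{equation*}
uniformly over $u \in A$, $v \in B$, and $w$ on the long sides of $U_{m,\phi}$, for a constant $c_0 = c_0(\psi) > 0$; this loss is $\Theta(\phi^2 n^{1/3})$ when $d(w) \asymp n$ and scales like $\phi^2 n^{4/3}/(d(w)\wedge (2n-d(w)))$ near the ends, which is even larger. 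Combining with Theorem \ref{t:onepoint}(iii) ($\E[T_{u,v}] = D_{v-u} + O(n^{1/3})$) absorbs the mean-correction into the parabolic loss for $\phi$ large. Next, partition $\llbracket 0, 2n \rrbracket$ into $M = O(n^{1/3})$ anti-diagonal blocks $J_j$ of length $n^{2/3}$ and set $W_j := (\text{long sides of }U_{m,\phi}) \cap \bigcup_{k \in J_j} \L_k$; each $W_j$ is contained in a box of side $O(n^{2/3})$. Applying Theorem \ref{t:supinf}(ii) to the parallelogram connecting a $2n^{2/3}$-neighborhood of $A$ to $W_j$ (and symmetrically $W_j$ to $B$) at fluctuation level $x = \Theta(\phi^2)$, one obtains
\begin{equation*}
\P\Bigl(\sup_{u \in A,\,w \in W_j}\bigl(T_{u,w} - D_{w-u}\bigr) \;\ge\; \tfrac{c_0}{4}\phi^2 n^{1/3}\Bigr) \;\le\; C e^{-c\phi^3},
\end{equation*}
and the analogous bound for $T_{w,v}$. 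The $x^{3/2}$-regime in Theorem \ref{t:supinf}(ii) is the relevant one since we may assume $\phi \le n^{1/3}$ (the opposite case being vacuous: if $\phi \ge n^{1/3}$, then $U_{m,\phi}$ trivially contains all up-right paths from $A$ to $B$). Putting these pieces together, on the good event $T(\gamma) \le T_{u,w} + T_{w,v} \le D_{v-u} - \tfrac{c_0}{2}\phi^2 n^{1/3} \le \E[T_{u,v}] - c\phi^2 n^{1/3}$, and a union bound over the $M$ blocks increases the failure probability by only a polynomial factor, absorbed by $e^{-c'\phi^3}$ after adjusting constants.

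The main obstacle is applying Theorem \ref{t:supinf}(ii) to the blocks $W_j$ adjacent to $\L_0$ or $\L_{2n}$, where the connecting parallelograms have degenerate aspect ratio and the stated form of Theorem \ref{t:supinf} does not immediately apply. These end-blocks can be handled separately using Theorem \ref{t:onepoint}(iv) (the moderate-deviation tail for degenerate aspect ratios) together with the enhanced parabolic loss of order $\phi^2 n^{4/3}/k$ active there. Alternatively, one observes that a path starting in $A$ and reaching a long side of $U_{m,\phi}$ within $\L_k$ for very small $k$ must traverse transversal distance $\Theta(\phi n^{2/3})$ in only $k$ anti-diagonal steps, forcing $k \gtrsim \phi n^{2/3}$; the endpoint regions are therefore only of size $\Theta(\phi n^{2/3})$ and their parabolic loss is so large that a crude union bound over individual lattice points on the long sides there already gives a probability bound much better than $e^{-c\phi^3}$.
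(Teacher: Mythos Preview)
The paper does not prove this proposition: it is quoted verbatim from \cite[Proposition C.8]{basu2019temporal} and used as a black box, so there is no in-paper argument to compare against. Your outline is precisely the standard strategy carried out in that reference (and in the Poissonian analogues of \cite{BSS14}): split any escaping path at a boundary point $w$, exploit the parabolic curvature loss $D_{v-u}-D_{w-u}-D_{v-w}\gtrsim \phi^2 n^{1/3}$ (with the sharper $\phi^2 n^{4/3}/k$ near the ends), discretize the long sides into $O(n^{1/3})$ blocks, apply a supremum tail bound at level $x\asymp\phi^2$ to each block, and union-bound. Your treatment of the degenerate end-blocks via Theorem~\ref{t:onepoint}(iv) and the geometric observation that $k\gtrsim \phi n^{2/3}$ is forced is also the standard device.

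One technical point worth flagging: Theorem~\ref{t:supinf}(ii) as stated here is calibrated to parallelograms whose short sides have length $2n^{2/3}$ matching the $2/3$ power of their own length, whereas the segment $A$ has fixed length $2n^{2/3}$ independent of the block position $k_j$. For blocks with $k_j\ll n$, a single application does not cover all of $A$; one must further partition $A$ into $O((n/k_j)^{2/3})$ sub-segments of length $k_j^{2/3}$ and apply the theorem to each, at the cost of an additional polynomial factor in the union bound. This is routine and is absorbed into $e^{-c\phi^3}$ after adjusting constants, but it is a step that the cited proof does spell out and that your sketch elides.
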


We would use the following result on the number of disjoint near geodesics. 
It is a an extension (from geodesics to near geodesics) of \cite[Proposition 3.1]{BHS}, while the proof is essentially verbatim, using the BK inequality combined with entropy estimates.

Let $A_k$ be the segment lying on $\L_0$ with length $2k^{1/16}n^{2/3}$ and midpoint $(mn^{2/3}, -mn^{2/3})$, and let $B_k$ be the segment lying on $\L_{2n}$ with length $2k^{1/16}n^{2/3}$ and midpoint $(n, n)$.
\begin{theorem}  \label{thm:disjoint-near-geo}
For each $\psi\in(0,1)$, there exist constants $c, n_0, N_0>0$ such that the following is true for any $n,N\in \N$, $n>n_0$, $n^{0.01}>N>N_0$, and $m$ with $|m|+N^{1/8}<\psi n^{1/3}$. 

Consider the event where there are $N$ $n^{1/3}$-near geodesics from $A_N$ to $B_N$, that are mutually disjoint.
Then the probability of this event is at most $e^{-cN^{1/4}}$.
\end{theorem}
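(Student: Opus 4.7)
The plan is to adapt the BK-plus-entropy scheme of \cite[Proposition 3.1]{BHS}, modified in two ways: exact geodesics are replaced by $n^{1/3}$-near geodesics, and the endpoint segments are enlarged from length $2n^{2/3}$ to $2N^{1/16}n^{2/3}$.

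First (localization), I would apply Proposition \ref{prop:trans-fluc} with the transversal scale $\phi$ a small multiple of $N^{1/16}$: every $n^{1/3}$-near geodesic from $A_N$ to $B_N$ lies inside the parallelogram $U_{m,\phi}$ with probability at least $1 - e^{-cN^{3/16}}$, which is negligible compared to the target $e^{-cN^{1/4}}$. Next (discretization), I would pick a suitable constant $M$ and, for each near geodesic $\gamma$, define its skeleton $\sigma(\gamma) = (w_0,\ldots,w_M)$ by $w_j := \gamma \cap \L_{2jn/M}$. Since any up-right path through $\sigma$ satisfies the subadditivity bound $T(\gamma) \le \sum_j T_{w_{j-1},w_j}$, the near-geodesic condition $T(\gamma) \ge T_{w_0,w_M} - n^{1/3}$, combined with the lower-tail estimate $T_{w_0,w_M} \ge D_{w_M - w_0} - Kn^{1/3}$ from Theorem \ref{t:onepoint}(ii), forces
\[
\sum_{j=1}^{M} \bigl(T_{w_{j-1},w_j} - D_{w_j-w_{j-1}}\bigr) \;\ge\; \Delta(\sigma) - (K+1)n^{1/3},
\]
where $\Delta(\sigma) := D_{w_M - w_0} - \sum_j D_{w_j - w_{j-1}} \ge 0$ is the nonnegative zigzag penalty (by concavity of $D$). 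The $M$ summands on the left are independent (they live on disjoint slabs), so by the slab-wise moderate deviations of Theorem \ref{t:supinf}(ii), the per-skeleton probability is at most $\exp\bigl(-c(\Delta(\sigma)/n^{1/3})^{3/2}\bigr)$ up to a multiplicative constant.

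For $N$ vertex-disjoint near geodesics with skeletons $\sigma_1,\ldots,\sigma_N$, the BK inequality applied to the monotone events ``some up-right path through $\sigma_i$ achieves weight $\ge D_{w_M^{(i)}-w_0^{(i)}} - (K+1)n^{1/3}$'' — each witnessed by its own near geodesic, hence on disjoint vertex sets — bounds their joint probability by $\prod_{i} \exp(-c(\Delta(\sigma_i)/n^{1/3})^{3/2})$. A union bound over all tuples of $N$ pairwise vertex-disjoint skeletons (of count at most $(CN^{1/16}n^{2/3})^{N(M+1)}$) then yields the claim after optimizing $M$.

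The main obstacle is closing the entropy-vs-penalty balance: one must show that for any $N$ pairwise disjoint skeletons inside a window of width $N^{1/16}n^{2/3}$, the aggregated penalty $\sum_i (\Delta(\sigma_i)/n^{1/3})^{3/2}$ exceeds the entropy contribution $NM\log n$ by at least $cN^{1/4}$. This rules out the dangerous configurations in which most skeletons are simultaneously nearly straight: disjointness on an enlarged window forces transversal spread of checkpoints, and concavity of $D$ converts that spread into a guaranteed aggregated zigzag penalty. The $N^{1/16}$ scaling of the window length in the statement is tuned precisely so that this trade-off yields the $N^{1/4}$ exponent, and the remaining ingredients (the transversal-fluctuation cutoff, the BK step, and the slab-wise moderate deviations) transport essentially verbatim from the BHS template.
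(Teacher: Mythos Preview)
Your approach is exactly what the paper intends: it does not give a proof of this theorem at all, but simply states that it is ``an extension of \cite[Proposition 3.1]{BHS}, while the proof is essentially verbatim, using the BK inequality combined with entropy estimates.'' Your BK-plus-entropy scheme with skeleton discretization is precisely that template.

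There is one arithmetic slip in your localization step. Taking $\phi$ a small multiple of $N^{1/16}$ gives a transversal-fluctuation error of order $e^{-cN^{3/16}}$, but since $3/16 < 1/4$ this is \emph{larger} than the target $e^{-cN^{1/4}}$, not negligible. The fix is trivial: take $\phi$ of order $N^{1/12}$ (or anything with exponent $\ge 1/12$), so that $\phi^3 \ge N^{1/4}$; the parallelogram then still contains $A_N$ and $B_N$ (since $N^{1/12} > N^{1/16}$), and after partitioning $A_N,B_N$ into $O(N^{1/8})$ unit-scale segments and union-bounding, the localization cost is $O(N^{1/8})e^{-cN^{1/4}} \le e^{-c'N^{1/4}}$. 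The entropy bookkeeping then uses window width $N^{1/12}n^{2/3}$ rather than $N^{1/16}n^{2/3}$, which only changes constants. With this correction the sketch is sound.
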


\section{Tails of local times of near geodesics } 
\label{s:loc}
In this section we bound the time that near geodesics spend near the diagonal.

For any $w\in \N$, we consider barriers of length $2w$, each centered and perpendicular to the diagonal, and are spaced $w^{3/2}$ apart. That is, they are open line segments joining $(iw^{3/2}-w,iw^{3/2}+w)$ and $(iw^{3/2}+w,iw^{3/2}-w)$.
More precisely, for each $i\in\Z$, we let $B_i=\{(iw^{3/2}-x,iw^{3/2}+x): |x|<w\}$.
We shall estimate the number of barriers that can be hit by a near geodesic.

For any $u\preceq v$ and $x>0$, we let $H^x_{u,v}(w)=\max_{\gamma}|\{i\in\Z: B_i\cap \gamma \neq\emptyset\}|$,
where the maximum is over all $x$-near geodesic $\gamma$ from $u$ to $v$, i.e., over all up-right path $\gamma$ from $u$ to $v$ with $T(\gamma)\ge T_{u,v}-x$.

For any real numbers $p<q$, we denote
\[
H^x(p,q,w)=\max\{H^x_{u,v}(w): i,j\in\Z; p\le iw^{3/2} < jw^{3/2} \le q; u\in B_i, v\in B_j\} ,
\]
and we let $H^x(n,w)=H^x(0,n,w)$ for any $n\in\N$.

\begin{theorem}   \label{thm:near-loc-time}
There exist constants $C,c>0$, such that
\[
\P[H^x(n,w) > M(\log(w))^{32}n^{1/3}w^{-1/2}+1] < Ce^{-cM}
\]
for any $M>0$, $n^{0.99}<w^{3/2}$, and $0\le x\le w^{1/2}$.
\end{theorem}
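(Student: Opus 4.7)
The plan is to reduce Theorem \ref{thm:near-loc-time} to three tasks: a single-pair mean bound via one-point estimates; conversion of the mean to an exponential single-pair tail using approximate independence of barrier-hitting events, together with a cap on the number of essentially distinct near-geodesics per pair (the forward-referenced result from Section \ref{s:multi}); and a final union bound over all pairs on barriers in $[0,n]$.

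For the single-pair mean, fix $u \in B_i$, $v \in B_j$ with $N = j-i$. An $x$-near geodesic from $u$ to $v$ passes through an intermediate barrier $B_k$ only if some lattice point $(p,p) \in B_k$ satisfies $T_{u,(p,p)} + T_{(p,p),v} \ge T_{u,v} - x$. Applying Theorem \ref{t:onepoint}(i) to each of the $O(w)$ base points on $B_k$ and Theorem \ref{t:onepoint}(ii) to $T_{u,v}$, and using $x \le w^{1/2}$ to control the threshold (which is much smaller than the intrinsic $N^{1/3}w^{1/2}$ passage-time fluctuation), this event has probability of order $((k-i)(j-k)/N)^{-2/3}$ up to logarithmic factors. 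Summing over $i < k < j$ yields an expected number of intermediate barriers hit of order $N^{1/3} \le (n/w^{3/2})^{1/3} = n^{1/3} w^{-1/2}$, which is the target scale.

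To upgrade to an exponential tail per pair, I would use that the event ``$B_k$ is hit by an $x$-near geodesic from $u$ to $v$'' is essentially determined by the field in an antidiagonal slab of width $O(w^{3/2})$ around $B_k$. After conditioning on the passage-time values at slab boundaries (controlled by moderate deviations from Theorems \ref{t:onepoint} and \ref{t:supinf}), the remaining barrier-hitting indicators form an approximately independent family to which a Chernoff-type inequality applies, yielding an $e^{-cM}$ tail on the excess over $M$ times the mean. Here the forward-referenced Section \ref{s:multi} estimate on the number of essentially distinct near-geodesics between a pair of points caps the number of simultaneous contributors; without this the contributions of many distinct near-geodesics could inflate the barrier count beyond a single-geodesic analysis. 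For the final union bound, the number of pairs $(u,v)$ on barriers in $[0,n]$ is at most $(n/w^{3/2})^2 w^2 \le n^2/w$, which under $n^{0.99} < w^{3/2}$ is polynomial in $w$; endpoint-Lipschitz continuity via Lemma \ref{lem:lpp-devi} and Proposition \ref{prop:trans-fluc} reduces the union to a $(\log w)^{O(1)}$-finer net of endpoints, and the resulting logarithmic factors are absorbed into the $(\log w)^{30}$ in the deviation.

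The main obstacle will be the independence step: honestly decoupling barrier-hitting events across many $B_k$'s, while tracking the full family of near-geodesics between each pair of endpoints. The forward-referenced near-geodesic count from Section \ref{s:multi} is crucial in reducing this to analyzing a bounded number of near-geodesics per pair, which is what enables both the Chernoff-type concentration and a clean union bound; the generous polylog allowance $(\log w)^{30}$ absorbs the combined cost of the boundary conditioning, the $O(w)$ base points per barrier, and the discretized endpoint net.
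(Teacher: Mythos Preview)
Your single-pair mean heuristic is correct in spirit and close to the paper's Lemma~\ref{lem:multi-dis-input}. The gap is in the concentration step. For a fixed pair $(u,v)$, the events $\{\text{some $x$-near geodesic from $u$ to $v$ hits }B_k\}$ for varying $k$ are \emph{not} approximately independent in any usable sense: each depends on the global quantity $T_{u,v}$ and on the entire field between $u$ and $v$, not on a slab near $B_k$. There is no conditioning on slab-boundary data that decouples them, because ``$x$-near geodesic'' is a global constraint on the whole path, not a Markovian one. The Section~\ref{s:multi} input bounds the number of well-separated near-optimal crossing locations on a \emph{single} anti-diagonal, which is a different statement from decoupling hits across many anti-diagonals; it does not by itself yield a Chernoff-type bound on $\sum_k \mathbf{1}[\text{hit }B_k]$.

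The paper's mechanism for the exponential tail is structurally different: it runs an induction on $n$, proving simultaneously a mean bound and a tail bound. The tail at scale $n$ comes from writing $H(n)\le\sum_{i=1}^r H((i-1)n/r,in/r)$ with $r$ a large constant; these summands are \emph{genuinely} independent (disjoint regions of the i.i.d.\ field), and the inductive hypothesis supplies both their tails and their means at scale $n/r$, after which a standard Chernoff argument closes the induction. The mean bound at scale $n$ is obtained separately, via a dyadic decomposition of $[0,n]$ and the barrier-hitting probability estimate (Lemma~\ref{lem:bound-oH}), with the inductive tail controlling the contribution of the finest dyadic pieces. Your plan has no analogue of this bootstrap: without it, there is no clean source of independence, and the union bound over endpoints (even after netting) leaves a polynomial-in-$w$ prefactor that is not absorbed by $(\log w)^{30}$ in the stated $Ce^{-cM}$ form for all $M>0$.
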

\begin{remark}
A special case of this theorem is when $x=0$, i.e., we can bound the maximum number of barriers hit by a geodesic (rather than a near geodesic).
In this case the condition $n^{0.99}<w^{3/2}$ is not necessary, since the only reason to have this condition is that we want to apply Proposition \ref{prop:multiple-peak} below to bound the number of near geodesics between a pair of points; while for geodesics there is only one between any pair of points.
More precisely, for the case of $x=0$ and $w=1$, it can be shown that (see \cite[Lemma 2.18]{GZloc}),
\begin{equation}  \label{eq:igaabM}
\P\left[ \max_{a, b\in\llbracket 0,n\rrbracket, a<b}| \{i\in \Z: \mathbf{i} \in \Gamma_{\mathbf{a},\mathbf{b}}\}| > Mn^{1/3} \right] < Ce^{-cM},    
\end{equation}
for $C,c>0$ being constants and any $M>0$.
A proof of this estimate can be found in \cite[Appendix A]{GZloc}, which uses the same method as the proof of Theorem \ref{thm:near-loc-time}.

Such estimates on the time that geodesics spend on a diagonal are useful in studying geometries of geodesics. For example, in \cite{GZloc,GZfract} such results are used to construct and prove convergence to geodesic local times in the directed landscape, a scaling limit of the Exponential LPP \cite{DOV,DV21}.
\end{remark}
\begin{remark}
It would also be interesting to get a lower bound for the number of barriers hit by a geodesic.
A lower bound in expectation can be quickly deduced from the following estimate.
\begin{proposition}[\protect{\cite[Theorem 2]{basu2021small}}] \label{prop:BB212}
For any $\epsilon>0$, there exists a constant $c>0$, such that for any large enough $n$, and $n^{-2/3}\le \delta\le 1$, and $t\in\llbracket \epsilon n, (1-\epsilon)n\rrbracket$, we have
\[
\P[|m_*|< \delta n^{2/3}] > c\delta,
\]
where $m_*\in \Z$ is the number such that $(t-m_*, t+m_*)\in \Gamma_{\boo,\bn}$.
\end{proposition}
We note that \cite[Theorem 2]{basu2021small} is stated to require $n$ to be large enough depending on $\delta$; but the proof in \cite{basu2021small} actually allows for more general choices of $\delta$ and $n$, as discussed below the statement there.
Then for any $n$ large enough and $w\le n^{2/3}$, by taking $\delta = wn^{-2/3}$ and summing over order $nw^{-3/2}$ many choices of $t$, we get
\begin{equation}\label{eq:EH-lb}
    \E[| \{i\in \Z: B_i\cap \Gamma_{\boo, \bn}\neq \emptyset\}|]>cn^{1/3}w^{-1/2},
\end{equation}
where $c>0$ is a constant. In addition, \eqref{eq:igaabM} and \eqref{eq:EH-lb} together imply the following: for any small $\delta>0$, there is $c(\delta)>0$ such that $\P[| \{i\in \Z: \mathbf{i} \in \Gamma_{\boo, \bn}\}| > \delta n^{1/3}] > c(\delta)$ for any large enough $n$.

As for near geodesics, we expect a generalization of \eqref{eq:EH-lb} as stated below to hold:
\[
\E[| \{i\in \Z: B_i\cap \gamma\neq \emptyset, \forall \gamma \text{ up-right path from }\boo\text{ to }\bn,\; T(\gamma)\ge T_{\boo,\bn}-w^{1/2} \}|]>cn^{1/3}w^{-1/2}.
\]
To prove this, it suffices to upgrade Proposition \ref{prop:BB212} to (using the notations there):
\[
\P\Big[|m_*|< \delta n^{2/3}, \sup_{|m|\ge\delta n^{2/3}} T_{\boo,(t-m,t+m)}+T_{(t-m,t+m),\bn} -\xi((t-m, t+m)) < T_{\boo,\bn}-\delta^{1/2}n^{1/3}\Big] > c\delta.
\]
To establish this upgrade of Proposition \ref{prop:BB212}, a potential route is to analyze the point-to-line profiles $m\mapsto T_{\boo,(t-m,t+m)}$ and $m\mapsto T_{(t-m,t+m),\bn}-\xi((t-m, t+m))$, using the random walk Gibbs property (stated as Lemma \ref{lem:T-Gibbs} below). We do not pursue a rigorous proof of this upgrade here.

We also mention that the convergence results in \cite{GZloc} also contain a lower bound, for the setting of the $(1,1)$-direction semi-infinite geodesic, which is an infinite up-right $\Z^2$ path $\Gamma_\boo=\{\boo=(x_0, y_0), (x_1, y_1),\ldots \}$, satisfying (1) $(x_i,y_i)-(x_{i-1},y_{i-1})\in \{(0,1), (1,0)\}$ for each $i\in\N$; (2) $\lim_{i\to\infty} x_i/y_i=1$; (3) for any $i<j$, $\Gamma_{(x_i,y_i), (x_j,y_j)}$ is contained in $\Gamma_\boo$.
The existence and almost sure uniqueness of $\Gamma_\boo$ have been established in the literature (see \cite{Cou11, FP05}).
The main result of \cite{GZloc} implies that as $n\to\infty$, $2^{2/3}n^{-1/3}| \{i\in \llbracket 0,n\rrbracket: \mathbf{i} \in \Gamma_{\boo}\}|$ converges to a random variable $L(1)$, where $L:[0,\infty)\to \R$ is the directed landscape geodesic local time constructed in \cite{GZfract}.
It is shown (as \cite[Proposition 7.6]{GZfract}) that $L(1)>0$ almost surely.
\end{remark}

We can extend Theorem \ref{thm:near-loc-time} by taking wider barriers, and the next result is what will be used in the next section.

For each $h\ge 1$ and $i\in \Z$, let $hB_i$ be the barrier with the same center as $B_i$, but length $2hw$; i.e. we let $hB_i=\{(iw^{3/2}-x,iw^{3/2}+x): |x|<hw\}$.
For any $n\in\N$ and $x>0$, we let $H^x(n,w;h)$ be defined the same way as $H^x(n,w)$, except for replacing each $B_i$ by $hB_i$.
\begin{theorem}  \label{thm:near-geo-loc}
There exist constants $C,c>0$, such that $\P[H^x(n,w;h) > Mh(\log(w))^{32}n^{1/3}w^{-1/2}+1] < Ch^{3/2}e^{-cM}$ for any $M>0$, $h\ge 1$, $n^{0.99}<w^{3/2}$, $x\le w^{1/2}$.
\end{theorem}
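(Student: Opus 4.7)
The plan is to reduce Theorem \ref{thm:near-geo-loc} to Theorem \ref{thm:near-loc-time} via a pigeonhole argument over translates of the narrow barrier system. Observe that the wide barrier decomposes as $hB_i = \bigcup_{k\in K} B_i^{(k)}$, where $B_i^{(k)}$ denotes the translate of $B_i$ along the anti-diagonal $\L_{2iw^{3/2}}$ by the vector $(-k,k)$, and $K=\{-(h-1)w,-(h-3)w,\ldots,(h-1)w\}$ consists of $h$ shifts spaced $2w$ apart. Since the weight field $\{\xi(v)\}_{v\in\Z^2}$ is i.i.d., Theorem \ref{thm:near-loc-time} applies verbatim to the $k$-translated barrier system $\{B_i^{(k)}\}_i$ for any fixed $k$, with the same tail bound.

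Given an $x$-near geodesic $\gamma$ from $u\in hB_i$ to $v\in hB_j$, let $\mathcal{L}\subset\{i,\ldots,j\}$ consist of those $l$ with $\gamma\cap hB_l\neq\emptyset$, and for each such $l$ pick $k(l)\in K$ so that the unique crossing vertex $u_l := \gamma\cap\L_{2lw^{3/2}}$ lies in $B_l^{(k(l))}$. Pigeonhole over $\mathcal{L}\setminus\{i,j\}$ produces a $k^*\in K$ and indices $l_1<\cdots<l_N$ all sharing $k(l_n)=k^*$, with $N\ge(|\mathcal{L}|-2)/h$. The key step is to verify that the sub-path $\gamma' := \gamma|_{u_{l_1}\to u_{l_N}}$ is itself an $x$-near geodesic from $u_{l_1}\in B_{l_1}^{(k^*)}$ to $u_{l_N}\in B_{l_N}^{(k^*)}$: combining the decomposition
\[
T(\gamma) = T(\gamma|_{u\to u_{l_1}}) + T(\gamma') + T(\gamma|_{u_{l_N}\to v}) - \xi(u_{l_1}) - \xi(u_{l_N})
\]
with $T(\gamma)\ge T_{u,v}-x$ and the super-additivity inequality $T_{u,u_{l_1}}+T_{u_{l_1},u_{l_N}}+T_{u_{l_N},v}-\xi(u_{l_1})-\xi(u_{l_N})\le T_{u,v}$ (obtained by concatenating the three relevant geodesics through the shared vertices $u_{l_1}$ and $u_{l_N}$) yields $T(\gamma')\ge T_{u_{l_1},u_{l_N}}-x$.

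Assembling the pieces: if $H^x(n,w;h) > Mh(\log w)^{30}n^{1/3}w^{-1/2}+1$, then for some $k^*\in K$ the analogue of $H^x(n,w)$ computed on the $k^*$-translated barrier system exceeds $M(\log w)^{30}n^{1/3}w^{-1/2}$. Up to a harmless constant shift in $M$ (absorbed into $C,c$), Theorem \ref{thm:near-loc-time} bounds the probability of this event by $Ce^{-cM}$ per translate, and a union bound over the $h$ choices of $k^*$ gives $Che^{-cM}\le Ch^{3/2}e^{-cM}$, establishing the claim. The main obstacle I expect is the sub-path reduction in the previous paragraph: one must be careful to first discard the two endpoint indices $i,j$ before pigeonholing, so that the surviving near geodesic has \emph{both} endpoints on the same translated barrier family rather than on arbitrary points of $hB_i$ and $hB_j$; once that is in hand, the remaining bookkeeping is essentially a union bound.
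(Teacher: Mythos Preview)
Your argument is correct and takes a genuinely different route from the paper's proof. The paper does not decompose each wide barrier transversally into $h$ translates of the narrow one; instead it \emph{subsamples along the diagonal}: for each residue $k\in\llbracket 0,h^{3/2}-1\rrbracket$ it extracts the subfamily $\{hB_{ih^{3/2}+k}\}_i$, which has width $2hw$ and spacing $(hw)^{3/2}$ and is therefore a barrier system at the rescaled parameter $hw$. Theorem~\ref{thm:near-loc-time} is then applied at scale $hw$ to each of the $h^{3/2}$ subfamilies (after a preliminary reduction to $h<w^3$ so that $\log(hw)$ is comparable to $\log w$), and summing $H^x(n,w;h)\le\sum_k H_k$ together with a union bound produces the prefactor $h^{3/2}$.

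Your approach keeps the original scale $w$, slices transversally into $h$ anti-diagonal translates, and uses the sub-path lemma plus pigeonhole to land inside a single translated system; the union bound then has only $h$ terms, yielding $Che^{-cM}$---slightly sharper than the stated $Ch^{3/2}e^{-cM}$. You also avoid the paper's preliminary reduction, since Theorem~\ref{thm:near-loc-time} is invoked with unchanged parameters $n,w$. Both proofs implicitly rely on the same elementary fact that a sub-segment of an $x$-near geodesic is again an $x$-near geodesic (the paper needs it to justify $|I_k|\le H_k$, you need it for $\gamma'$); you made this step explicit and correctly flagged the need to discard the two endpoint indices before pigeonholing so that the surviving sub-path has both endpoints on the chosen translated family.
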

\begin{proof}
We can assume that $h<w^3$ and $n^{2/3}> hw$, since otherwise $h(\log(w))^{32}n^{1/3}w^{-1/2} \ge nw^{-3/2}$, and the conclusion follows since there is always $H^x(n,w;h) \le nw^{-3/2}+1$.

For each $k=0,\ldots, h^{3/2}-1$, we let $H_k$ be the maximum number of barriers in $\{hB_{ih^{3/2}+k}: i\in \Z\}$, hit by an $x$-near geodesic from $hB_{ih^{3/2}+k}$ to $hB_{jh^{3/2}+k}$, for some $i,j\in \Z$ with $0\le i(hw)^{3/2}+k<j(hw)^{3/2}+k\le n$.
From this definition we have $H^x(n,w;h) \le \sum_{k=0}^{h^{3/2}-1}H_k$, and by Theorem~\ref{thm:near-loc-time} we have $\P[H_k > M(\log(w))^{32}n^{1/3}(hw)^{-1/2}] < Ce^{-cM}$ for each $0\le k \le h^{3/2}-1$ (here we use that $(\log(w))^{32}n^{1/3}(hw)^{-1/2} > 1$).
Then the conclusion follows from a union bound.
\end{proof}

\subsection{Inductive proof for Theorem \ref{thm:near-loc-time}}

We fix $w\in \N$. It suffices to prove for the case where $x=w^{1/2}$, so we will assume $x=w^{1/2}$ herein. Thus for simplicity of notations, we also write $H_{u,v}=H_{u,v}^x(w)$, $H(p,q)=H^x(p,q,w)$, and $H(n)=H^x(n,w)$.

We shall prove the following statements:
there exist $C,c>0$ and large $A>0$, such that 
\begin{equation}  \label{eq:hbd-e}
\E[H(n)] < A^{2/3+0.01}n^{1/3}w^{-1/2}(\log(w))^{32}+1,    
\end{equation}
and for any $M>0$
\begin{equation}  \label{eq:hbd-t}
\P[H(n) > MAn^{1/3}w^{-1/2}(\log(w))^{32}+1] < Ce^{-cM}.
\end{equation}
We prove these results by induction in $n$.
The base case where $n\le Aw^{3/2}$ is obvious, since there is always $H(n)\le nw^{-3/2}+1$, and with $n\le Aw^{3/2}$ we have $H(n) \le A^{2/3}n^{1/3}w^{-1/2}+1$.
Below we prove these results for some $n> Aw^{3/2}$, assuming they hold for all smaller $n$.

\noindent\textbf{Expectation: proof of \eqref{eq:hbd-e}.}

For any $u \preceq v \in \Z^2$, and $k\in \Z$, we let $\cI_{u,v}^k$ be the following event: there exists a $w^{1/2}$-near geodesic $\gamma$ from $u$ to $v$, with $\gamma\cap B_k\neq \emptyset$.

For $p<q$, we let $\oH(p,q)$ be the number of the following barriers $B_k$: 
\begin{enumerate}
    \item $p\le w^{3/2}k \le q$;
    \item there are some $i, j\in\Z$ and $u\in B_i, v\in B_j$, such that either $2p-q\le w^{3/2}i\le p$ and $q\le w^{3/2}j$, or $w^{3/2}i\le p$ and $q\le w^{3/2}j\le 2q-p$; 
    and $\cI_{u,v}^k$ holds.
\end{enumerate}
In words, for $\oH(p,q)$ we need the end points $u$ and $v$ satisfy that $2(2p-q) \le d(u) \le 2p$ and $2q \le d(v) \le 2(2q-p)$, whereas for $H(p,q)$ we have $2p \le d(u), d(v) \le 2q$.
This $\oH(p,q)$ is easier to work with, and we can directly bound its expectation, using translation invariance of the model.
\begin{lemma}  \label{lem:bound-oH}
There is a universal constant $C_1>0$, such that $\E[\oH(p,q)]<C_1(\log(w))^{32} ((q-p)^{1/3}w^{-1/2}+ 1)$, for any $p<q\in\Z$, $q-p < (w^{3/2})^{1.03}$.
\end{lemma}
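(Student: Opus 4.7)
The plan is to bound $\oH(p,q) \le \sum_{k:\, p \le w^{3/2}k \le q} \don_{E_k}$, where $E_k$ is the event appearing in the definition of $\oH$ (existence of some allowed pair $(u,v)$ with $\cI_{u,v}^k$ holding), and apply linearity of expectation. My target is to show $\P[E_k] \le C(\log w)^{20}\cdot w/(q-p)^{2/3}$ for each $k$ in the bulk of $[p,q]$, since summing over the at most $(q-p)/w^{3/2}$ relevant barriers then produces the main $(q-p)^{1/3}w^{-1/2}$ contribution, while the constant number of boundary barriers within $w^{3/2}$ of $p$ or $q$ provide the ``$+1$''. The per-barrier rate $w/(q-p)^{2/3}$ is the natural KPZ scaling: a barrier has transversal width $\sim w$, while a geodesic of diagonal length $\sim q-p$ has transversal fluctuation $\sim (q-p)^{2/3}$.

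For each bulk $k$ I would attack $\P[E_k]$ in three steps. First, I invoke Proposition~\ref{prop:trans-fluc} to restrict attention to near geodesics whose paths are confined to a polylogarithmically wide tube around the diagonal, at the cost of super-polynomially small error. Second, I discretize the allowed $u$-endpoint one cell per $B_i$, and use Lemma~\ref{lem:lpp-devi} to control the variation of passage times $T_{u,\cdot}$ and $T_{\cdot,v}$ across each cell by $(\log w)^7\sqrt{w}$. Third, on each discretized cell the event $\cI_{u,v}^k$ demands $T_{u,v} - \max_{z\in B_k}(T_{u,z}+T_{z,v}-\xi(z)) \le w^{1/2}$; Theorem~\ref{t:onepoint} bounds each passage time within $O((q-p)^{1/3})$ of its $D$-expectation, and the convexity of $D$ gives an expected detour cost of order $d^2/(q-p)$, where $d$ is the transversal distance from $B_k$ to the straight segment $uv$. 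Thus $\cI_{u,v}^k$ either forces this segment to come within $O((q-p)^{2/3})$ of $B_k$ (confining the admissible $(u,v)$ to a slab of the appropriate width) or requires an atypically large fluctuation whose probability Theorem~\ref{t:onepoint}(i)--(ii) controls; aggregating these contributions yields the target bound on $\P[E_k]$.

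The main technical obstacle is handling the unbounded far endpoint: in Case A of the definition $v$ ranges over arbitrarily many barriers past $q$, and Case B is symmetric. I would dyadically decompose over $d(v)-q$ at scales $2^\ell(q-p)$ for $\ell \ge 0$, noting that as $v$ recedes the angular window of ``directions from $u$ whose straight line comes within $O((q-p)^{2/3})$ of $B_k$'' shrinks like $1/2^\ell$, so the number of $v$-cells able to contribute at each dyadic scale stays roughly bounded; Proposition~\ref{prop:trans-fluc} then kills longer-range deviations with super-polynomial tails, and summing the geometrically decreasing contributions over $\ell$ costs only a further bounded multiplicative factor. Juggling the polylogarithmic losses from (i)~this dyadic sup over far $v$, (ii)~the discretization from Lemma~\ref{lem:lpp-devi}, and (iii)~the tail applications of Theorem~\ref{t:onepoint} on each discretized cell, all inside the $(\log w)^{20}$ budget, is the bookkeeping-heavy but conceptually routine part of the argument.
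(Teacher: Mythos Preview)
Your third step contains a genuine gap that undermines the approach. You write that the convexity of $D$ gives an expected detour cost $d^2/(q-p)$, where $d$ is the transversal distance from $B_k$ to the segment $uv$, and that this confines admissible $(u,v)$ to a slab. But recall that $u\in B_i$ and $v\in B_j$ are both points on barriers, hence both satisfy $|ad(u)|,|ad(v)|\le 2w$. The barrier $B_k$ is likewise centered on the diagonal with half-width $w$. So the straight segment from $u$ to $v$ always stays within anti-diagonal distance $2w$ of the diagonal and in particular always passes within $O(w)$ of the center of $B_k$. Your quantity $d$ is therefore $O(w)$ for \emph{every} admissible pair $(u,v)$, and the expected detour cost is $O(w^2/(q-p))$, which is far smaller than the tolerance $w^{1/2}$. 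At the level of expectations, $\cI_{u,v}^k$ is not obstructed at all: the convexity argument gives no constraint and Theorem~\ref{t:onepoint} by itself cannot separate the event from probability of order one.

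What actually makes $\P[\cI_{u,v}^k]$ small is not a detour cost but \emph{anti-concentration}: although the mean location of the geodesic at $\L_{2w^{3/2}k}$ is near the diagonal, its transversal position fluctuates on scale $\ell^{2/3}$ (with $\ell$ the distance to the nearer endpoint), so landing in the narrow window $B_k$ of width $2w$ happens with probability of order $w/\ell^{2/3}$. The paper does not try to prove such anti-concentration directly. Instead it packages the required estimate as Lemma~\ref{lem:multi-dis-input}, which is obtained via a translation-invariance averaging argument (Lemma~\ref{lem:cnlw-bd}): sliding the barrier along $\L_0$ and bounding the \emph{expected number} of translates hit by some near geodesic, which in turn needs the multi-peak control of Proposition~\ref{prop:multiple-peak} to handle near geodesics rather than just the geodesic. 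The proof of Lemma~\ref{lem:bound-oH} then reduces to a dyadic sum over the position of the \emph{near} endpoint (not the far one), applying Lemma~\ref{lem:multi-dis-input} at each scale. Note also that the correct per-barrier bound is $w\cdot(\text{dist to the nearer endpoint range})^{-2/3}$, not the uniform $w/(q-p)^{2/3}$ you target; the latter fails for barriers $B_k$ with $w^{3/2}k$ close to $p$ or $q$, though the correctly weighted sum still integrates to $(q-p)^{1/3}w^{-1/2}$.
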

The following estimate is a key input.
\begin{lemma}  \label{lem:multi-dis-input}
For $l\in \N$, let $\cI_l$ be the following event: there exist $u\in B_i$ for some $i\in \Z$ with $-2l<iw^{3/2}<-l$, and also $v\in \Z^2$ with $d(v)>w^{3/2}/10$, such that the event $\cI_{u,v}^0$ holds.
There is a universal constant $C_1'>0$, such that $\P[\cI_l]<C_1'(\log(w))^{32}wl^{-2/3}$, for any $w^{3/2}<l < (w^{3/2})^{1.02}$.
\end{lemma}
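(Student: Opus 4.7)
My plan is to prove Lemma \ref{lem:multi-dis-input} by a multi-step reduction, leveraging transversal fluctuation estimates, one-point passage-time estimates, and the multiple-peak bound (Proposition \ref{prop:multiple-peak}).

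First, I would reduce the event $\cI_l$ to one with a fixed antidiagonal for the endpoint. If $\cI_{u,v}^0$ holds via a $w^{1/2}$-near geodesic $\gamma$, set $z := \gamma\cap\L_{\lceil w^{3/2}/10\rceil}$. Using the decomposition $T(\gamma)=T(\gamma_1)+T(\gamma_2)-\xi(z)$ and the submultiplicativity $T_{u,v}\ge T_{u,z}+T_{z,v}-\xi(z)$, the sub-path $\gamma_1$ satisfies $T(\gamma_1)\ge T_{u,z}-w^{1/2}$, so it is a $w^{1/2}$-near geodesic from $u$ to $z$ still meeting $B_0$. Hence $\cI_l$ implies the existence of $u\in B_i$ (with $i$ in the prescribed range) and $z\in\L_{\lceil w^{3/2}/10\rceil}$ with $\cI_{u,z}^0$.

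Second, I would restrict the range of relevant endpoints. By Proposition \ref{prop:trans-fluc} applied at the KPZ scale of the long segment (length $\sim l$), a $w^{1/2}$-near geodesic from $u$ to any point of $\L_{\lceil w^{3/2}/10\rceil}$ that meets $B_0$ must lie in a polylog-widened parallelogram around the diagonal; this forces $|\mathrm{ad}(z)|\le (\log w)^{C_0}\cdot l^{2/3}$ and similarly restricts the effective range of $u$ within $B_i$, outside an event of probability $\le e^{-c(\log w)^{3}}$. Simultaneously I discretize both $u$ and $z$ on these segments at scale $w^{2/3}$ (the natural KPZ transversal unit), so that the remaining combinatorial enumeration is of size $\mathrm{polylog}(w)\cdot l^{2/3}\cdot l/w^{1/2}$ up to constants.

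Third, for a single discretized pair $(u,z)$ I would estimate $\P[\cI_{u,z}^0]$ by writing the event as $\max_{y\in B_0}(T_{u,y}+T_{y,z}-\xi(y))\ge T_{u,z}-w^{1/2}$ and controlling the three passage times via Theorem \ref{t:onepoint} together with the deterministic curvature estimate
\[
D_{z-u}-D_{y-u}-D_{z-y}=\frac{L}{2ln}\bigl(\mathrm{ad}(y)-c^*(u,z)\bigr)^2+O(1),
\]
where $c^*(u,z)$ is the straight-line crossing of $\L_0$. Combining this quadratic cost, the near-geodesic slack $w^{1/2}$, and the one-point tail bounds then yields a per-pair upper bound of the form $(\log w)^{C_1}\cdot w\cdot l^{-2/3}\cdot (w^{2/3}/l^{2/3})$, i.e.\ the single-pair probability is small enough that after summing over the discretized pairs, only the factor $w\cdot l^{-2/3}$ (times polylog) survives. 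Proposition \ref{prop:multiple-peak} is what makes the step from a single geodesic to an arbitrary $w^{1/2}$-near geodesic between the discretized endpoints cost only a polylog factor, by bounding the number of well-separated near geodesics between any pair.

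The main obstacle I expect is the third step: getting the per-pair estimate sharp enough that the union bound over the discretized $(u,z)$ does not swamp the gain $w/l^{2/3}$. The delicate point is balancing the transversal fluctuation scale at $\L_0$ (which is controlled by the long segment of length $l$, giving the $l^{2/3}$ denominator) against the deterministic quadratic cost through $B_0$ and the random Tracy--Widom fluctuations of the three passage times, while handling the near-geodesic slack $w^{1/2}$ and the multiple-peak multiplicity. Once this is done, collecting polylog factors from Steps 2--4 gives the stated bound $C_1'(\log w)^{20}w\, l^{-2/3}$.
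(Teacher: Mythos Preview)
Your Step~3 is where the argument breaks. Take the simplest pair: $u$ and $z$ both on the diagonal, so the straight line from $u$ to $z$ crosses $\L_0$ at the origin. The curvature cost to route through any $y\in B_0$ is then only $O(w^2/l)<w^{1/2}$, which is absorbed by the near-geodesic slack and gives no restriction whatsoever. The one-point bounds of Theorem~\ref{t:onepoint} operate at scale $l^{1/3}\gg w^{1/2}$ and therefore cannot distinguish whether the crossing lands in a window of width $2w$ inside the macroscopic window of width $\asymp l^{2/3}$. Proposition~\ref{prop:multiple-peak} tells you there are at most $\mathrm{polylog}(w)$ many $w$-separated near-crossings, but it does not tell you \emph{where} they are; turning ``at most $M$ near-crossings spread over a window of size $l^{2/3}$'' into ``probability $\lesssim Mw/l^{2/3}$ of hitting the specific window $B_0$'' requires an anti-concentration/density estimate on the crossing location, and none of your stated inputs provides one. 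So the per-pair bound you claim is not established (and, separately, your enumeration size and per-pair bound do not multiply to the target $w\,l^{-2/3}$).

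The paper sidesteps this with a translation-invariance trick. Rather than estimate the probability that a near geodesic hits the \emph{specific} barrier $B_0$, it considers the $\asymp l^{2/3}/w$ translates $A_w^i$ of $B_0$ filling $A_{l^{2/3}}$ and bounds the \emph{expected number} $\E[\cN_{l,w}]$ of translates hit (Lemma~\ref{lem:cnlw-bd}). This expectation is $O((\log w)^{20})$, proved by ranking the near geodesics between each pair of endpoints so that paths of a fixed rank cannot cross twice across $\L_0$, and then combining Proposition~\ref{prop:multiple-peak} with the disjoint-path estimate Theorem~\ref{thm:disjoint-near-geo}. Since by translation invariance every translate has the same hitting probability, one immediately gets $\P[\cI_l]\lesssim \E[\cN_{l,w}]\cdot w\,l^{-2/3}$. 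This averaging step is exactly the missing density bound you would need for your Step~3, obtained indirectly rather than pointwise.
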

We leave the proof of this lemma to the next subsection, and deduce Lemma \ref{lem:bound-oH} assuming it.

\begin{proof}[Proof of Lemma \ref{lem:bound-oH}]
The general idea is to do a dyadic decomposition on the location of the end points, and apply Lemma \ref{lem:multi-dis-input}.

We start by setting up some notations.
For each $k\in\Z$, and $a<b<g$, let $\cE_{a,b,g,k}^-$ be the event where there are some $i,j\in\Z$, $a\le w^{3/2}i\le b$, $w^{3/2}j\ge g$, and $u\in B_i$, $v\in B_j$, such that $\cI_{u,v}^k$ holds.
For each $k\in\Z$, and $a>b>g$, let $\cE_{a,b,g,k}^+$ be the event where there are some $i,j\in\Z$, $a\ge w^{3/2}j\ge b$, $w^{3/2}i\le g$, and $u\in B_i$, $v\in B_j$, such that $\cI^k_{u,v}$ holds.

We take any $k\in\Z$ with $p+2w^{3/2}< w^{3/2}k \le (p+q)/2$, and any $m\in\N$ such that $2^m>w^{3/2}$, and $w^{3/2}k-2^{m+1} \le p$, $w^{3/2}k-2^m \ge 2p-q$. Then we have $w^{3/2}<2^m < (w^{3/2})^{1.03}$.
By Lemma \ref{lem:multi-dis-input}, we have
\[
\P[\cE_{w^{3/2}k-2^{m+1}, w^{3/2}k-2^m, q, k}^-] < C_1'2^{-2m/3}w(\log(w))^{32}.
\]
Now by taking a union bound over all such $m$, we conclude that 
\[
\P[\cE_{2p-q,p,q,k}^-] < 10C_1' (w^{3/2}k-p)^{-2/3}w(\log(w))^{32}.
\]
For each $k\in\Z$ with $p\le w^{3/2}k\le (p+q)/2$ we can similarly get 
\[
\P[\cE_{2q-p,q,2p-q,k}^+] < 10C_1' (q-p)^{-2/3}w(\log(w))^{32}.
\]
Note that $\cE_{2p-q,p,q,k}^-\cup \cE_{2q-p,q,2p-q,k}^+$ precisely implies that $\cI_{u,v}^k$ holds for some $u\in B_i, v\in B_j$, where $i, j\in\Z$ and either $2p-q\le w^{3/2}i\le p$ and $q\le w^{3/2}j$, or $w^{3/2}i\le p$ and $q\le w^{3/2}j\le 2q-p$.
The previous two inequalities imply that we can bound its probability by $20C_1' (w^{3/2}k-p)^{-2/3}w(\log(w))^{32}$, for any $k\in\Z$ with $p+2w^{3/2}< w^{3/2}k \le (p+q)/2$.
By symmetry, for $k\in\Z$ with $(p+q)/2\le w^{3/2}k< q-2w^{3/2}$ we can bound this probability by $20C_1' (q-w^{3/2}k)^{-2/3}w(\log(w))^{32}$.
Then by summing over all $k$ we get the conclusion.
\end{proof}

Next we shall bound $\E[H(n)]$, using the bound of $\E[\oH(p,q)]$ and the induction hypothesis.
The idea is again using some dyadic decomposition on the location of the end points.

Take some $g\in \N$ to be determined, and denote $G=2^g$. We take two collections of intervals:
\[
\Phi = \{(a2^b, (a+1)2^b): a,b, \in \Z, \; b>g, \; 0\le a2^b<(a+1)2^b \le n \},
\]
and 
\[
\Psi = \{(a2^g, (a+1)2^g): a \in \Z, \; 0\le a2^g<(a+1)2^g \le n+2^g \}.
\]
We claim that
\begin{equation}  \label{eq:cover}
H(n) \le \sum_{(p,q)\in\Phi} \oH(p,q) + 2\max_{(p,q)\in\Psi} H(p,q) .
\end{equation}

\begin{figure}[hbt!]
    \centering
\begin{tikzpicture}[line cap=round,line join=round,>=triangle 45,x=7cm,y=7cm]
\clip (-0.3,-0.3) rectangle (1.7,1.7);

\foreach \i in {0,...,75}
{
\draw [thin, blue] (\i/50-0.005, \i/50+0.005) -- (\i/50+0.005, \i/50-0.005);
}

\draw [red] plot [smooth] coordinates {(1.15,1.13) (1.15,1.03) (0.92,1.03) (0.92,0.89) (0.63,0.89) (0.63,0.6) (0.45,0.6) (0.45, 0.32) (0.29,0.32) (0.29,0.27) (0.29,0.22) (0.2,0.22) (0.2,0.16)};

\begin{scriptsize}

\draw (0.725+1/32,1.525+1/32) node[anchor=south]{$\L_{2(a_++1)2^{g}}$};
\draw (0.725,1.525) node[anchor=east]{$\L_{2(a_{1,+}+1)2^{b_{1,+}}}=\L_{2a_+2^{g}}$};
\draw (0.6,1.4) node[anchor=east]{$\L_{2(a_0+1)2^{b_0}}=\L_{2a_{1,+}2^{b_{1,+}}}$};
\draw (0.1,0.9) node[anchor=east]{$\L_{2a_02^{b_0}}$};

\draw (0.2,0.16) node[anchor=north west]{$u$};
\draw (1.14,1.14) node[anchor=north west]{$v$};

\draw (0.,0.) node[anchor=east]{$(0,0)$};
\draw (1.5,1.5) node[anchor=west]{$(n,n)$};

\draw (0.6,-0.2) node[anchor=north]{$2^{g}$};
\draw (0.67,-0.13) node[anchor=north]{$2^{b_{2,-}}$};

\draw (0.8,0.) node[anchor=north]{$2^{b_{1,-}}$};
\draw (1.2,0.4) node[anchor=north]{$2^{b_0}$};
\draw (1.5,0.7) node[anchor=north]{$2^{b_{1,+}}$};

\draw (1.57,0.77) node[anchor=north]{$2^{g}$};
\end{scriptsize}

\draw [dotted, thin] (0.725+1/32,1.525+1/32) -- (1.525+1/32,0.725+1/32);
\draw [dotted, thin] (0.725,1.525) -- (1.525,0.725);
\draw [dotted, thin] (0.6,1.4) -- (1.4,0.6);

\draw [dotted, thin] (0.1,0.9) -- (0.9,0.1);
\draw [dotted, thin] (-0.15,0.65) -- (0.65,-0.15);
\draw [dotted, thin] (-0.15-1/16,0.65-1/16) -- (0.65-1/16,-0.15-1/16);
\draw [dotted, thin] (-0.15-1/16-1/32,0.65-1/16-1/32) -- (0.65-1/16-1/32,-0.15-1/16-1/32);
\draw [fill=uuuuuu] (0,0) circle (1.0pt);
\draw [fill=uuuuuu] (1.5,1.5) circle (1.0pt);
\draw [fill=uuuuuu] (1.14,1.14) circle (1.0pt);
\draw [fill=uuuuuu] (0.2,0.16) circle (1.0pt);

\draw[{to}-{to}] (0.65-1/16-0.03,-0.15-1/16+0.03) -- (0.65-1/16-1/32-0.03,-0.15-1/16-1/32+0.03);

\draw[{to}-{to}] (0.65-1/16-0.03,-0.15-1/16+0.03) -- (0.65-0.03,-0.15+0.03);

\draw[{to}-{to}] (0.65-0.03,-0.15+0.03) -- (0.9-0.03,0.1+0.03);

\draw[{to}-{to}] (0.9-0.03,0.1+0.03) -- (1.4-0.03,0.6+0.03);
\draw[{to}-{to}] (1.525-0.03,0.725+0.03) -- (1.4-0.03,0.6+0.03);
\draw[{to}-{to}] (1.525-0.03,0.725+0.03) -- (1.525+1/32-0.03,0.725+1/32+0.03);
\end{tikzpicture}
\caption{An illustration of the proof of \eqref{eq:cover}: we take a dyadic decomposition of a path from $u$ to $v$. The blue segments illustrate the barriers.}
\label{fig:dec-H}
\end{figure}
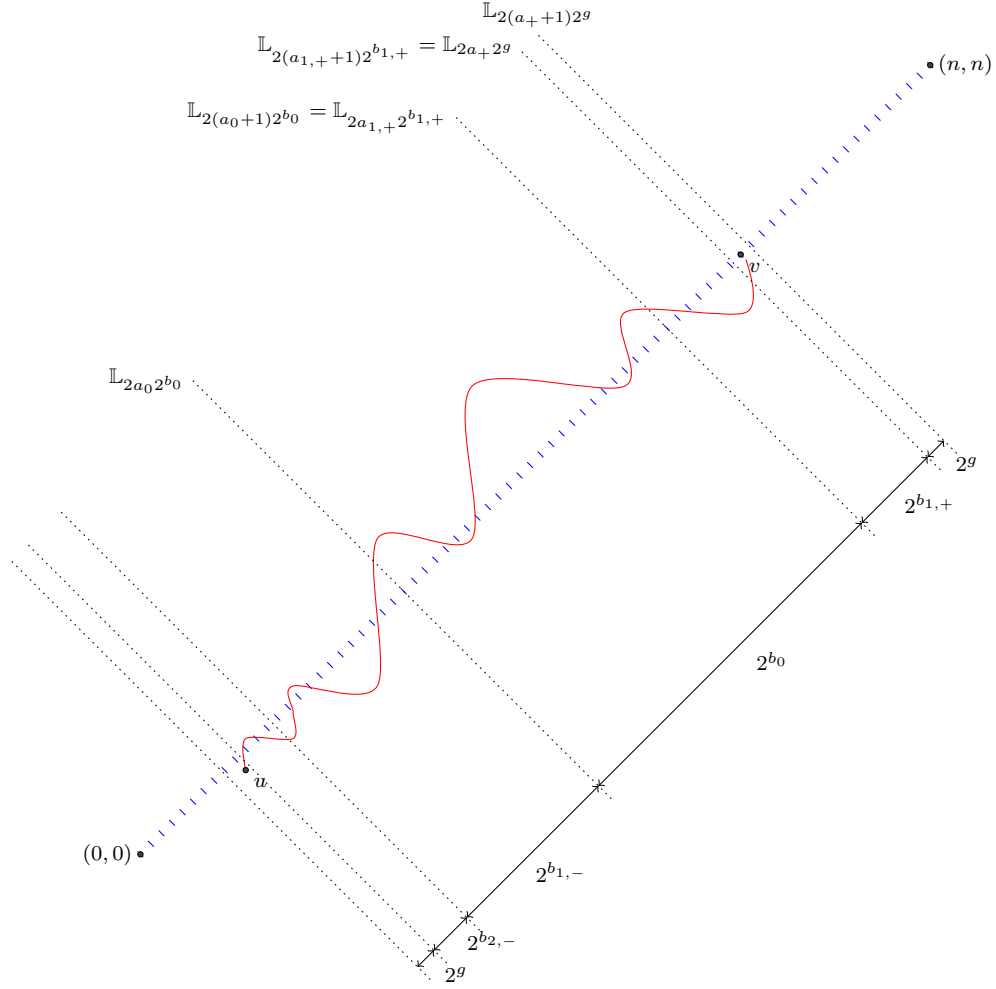

\begin{proof}[Proof of \eqref{eq:cover}]
Take any $i,j\in \Z$ such that $0\le w^{3/2}i<w^{3/2}j \le n$, and $u\in B_i$, $v\in B_j$.
We need to bound $H_{u,v}$ by the right hand side of \eqref{eq:cover}.

We first let $b_0$ be the largest integer, such that there is some $a_0\in\Z$, with $(a_0-1)2^{b_0} < w^{3/2}i\le a_02^{b_0} < (a_0+1)2^{b_0} \le w^{3/2}j$.
As $b_0$ is the largest such integer, we must also have that $w^{3/2}j<(a_0+3)2^{b_0}$.

If $b_0<g$, we can find some $a_*\in \Z$ such that $0\le a_*2^g\le w^{3/2}i<w^{3/2}j\le (a_*+2)2^g\le n+2^g$. Then we have
\[
H_{u,v} \le H(a_*2^g, (a_*+1)2^g) + H((a_*+1)2^g, (a_*+2)2^g).
\]
This is obviously bounded by the right hand side of \eqref{eq:cover}.

We next assume that $b_0\ge g$. 
We then want to choose integers $b_{0,+}>b_{1,+}> \cdots > b_{k',+}$ and $a_{0,+},a_{1,+}, \cdots, a_{k',+}$, and $a_+$, such that
\begin{equation}  \label{eq:cover-pf-u}
[(a_0+1)2^{b_0}, w^{3/2}j] \subset \bigcup_{0\le k \le k'}[a_{k,+}2^{b_{k,+}}, (a_{k,+}+1)2^{b_{k,+}}] \cup [a_+2^g, (a_++1)2^g];
\end{equation}
and for each $0\le k \le k'$ we have
\begin{equation} \label{eq:cover-pf1}
(a_{k,+}+1)2^{b_{k,+}} \le w^{3/2}j < (a_{k,+}+2)2^{b_{k,+}}.
\end{equation}
We choose these numbers inductively.
First we take $b_{0,+}=b_0$ and $a_{0,+}\in \{a_0, a_0+1\}$, such that \eqref{eq:cover-pf1} for $k=0$ holds.

Then suppose we have defined $b_{k,+}$ and $a_{k,+}$, and we assume that \eqref{eq:cover-pf1} holds.
We consider $\lfloor \log_2( w^{3/2}j-(a_{k,+}+1)2^{b_{k,+}})\rfloor$: if it is at least $g$, we denote it as $b_{k+1,+}$, and otherwise we stop here and let $k'=k$.
Then (in the first case) we would have $b_{k+1,+}<b_{k,+}$ by \eqref{eq:cover-pf1} and the choice of $b_{k+1,+}$. We take $a_{k+1,+}$ such that $(a_{k,+}+1)2^{b_{k,+}} = a_{k+1,+}2^{b_{k+1,+}}$.
By the largest property of $b_{k+1,+}$ we would have $(a_{k+1,+}+1)2^{b_{k+1,+}} \le w^{3/2}j < (a_{k+1,+}+2)2^{b_{k+1,+}}$.

Finally we take $a_+\in \Z$ such that $a_+2^g \le w^{3/2}j < (a_++1)2^g$, then we would have $a_+2^g = (a_{k',+}+1)2^{b_{k',+}}$

For each $0\le k \le k'$, we consider the number of barriers $B_l$, such that $a_{k,+}2^{b_{k,+}} \le  w^{3/2}l \le (a_{k,+}+1)2^{b_{k,+}}$ and $\cI^l_{u,v}$ holds.
This is at most $\oH(a_{k,+}2^{b_{k,+}}, (a_{k,+}+1)2^{b_{k,+}})$.
For barriers $B_l$ such that $a_+2^g \le  w^{3/2}l \le (a_++1)2^g$ and $\cI^l_{u,v}$ holds, the number is at most $H(a_+2^g, (a_++1)2^g)$.

We can similarly inductively choose $b_{0,-}>b_{1,-}> \cdots > b_{k'',-}$ and $a_{0,-},a_{1,-}, \cdots, a_{k'',-}$, and $a_-\in \Z$, such that $b_{0,-}=b_0$ and $a_{0,-}= a_0$, and
\begin{equation}  \label{eq:cover-pf-d}
[w^{3/2}i, (a_0+1)2^{b_0}] \subset \bigcup_{0\le k \le k''}[a_{k,-}2^{b_{k,-}}, (a_{k,-}+1)2^{b_{k,-}}] \cup [a_-2^g, (a_-+1)2^g],
\end{equation}
and for each $0\le k \le k''$ we have
\[
(a_{k,+}-1)2^{b_{k,+}} < w^{3/2}i \le a_{k,+}2^{b_{k,+}}.
\]
Similarly, for each $0\le k \le k''$, we consider the number of barriers in $B_l$ such that $a_{k,-}2^{b_{k,-}} \le  w^{3/2}l \le (a_{k,-}+1)2^{b_{k,-}}$ and $\cI^l_{u,v}$ holds.
This is at most $\oH(a_{k,-}2^{b_{k,-}}, (a_{k,-}+1)2^{b_{k,-}})$. For barriers $B_l$ such that $a_-2^g \le  w^{3/2}l \le (a_-+1)2^g$ and $\cI^l_{u,v}$ holds, the number is at most $H(a_-2^g, (a_-+1)2^g)$.
Also note that $(a_{k,+}2^{b_{k,+}}, (a_{k,+}+1)2^{b_{k,+}})$ for $0\le k \le k'$ and $(a_{k,-}2^{b_{k,-}}, (a_{k,-}+1)2^{b_{k,-}})$ for $1\le k\le k''$ are mutually different. 

Thus we have that $[w^{3/2}i, w^{3/2}j]$ is covered by the union of the right hand sides of \eqref{eq:cover-pf-u} and \eqref{eq:cover-pf-d}, so we conclude that
\begin{multline*}
H_{u,v} \le \sum_{k=\iota}^{k'} \oH(a_{k,+}2^{b_{k,+}}, (a_{k,+}+1)2^{b_{k,+}}) + H(a_+2^g, (a_++1)2^g) \\ + \sum_{k=0}^{k''} \oH(a_{k,-}2^{b_{k,-}}, (a_{k,-}+1)2^{b_{k,-}})  + H(a_-2^g, (a_-+1)2^g),   
\end{multline*}
where $\iota = 0$ if $a_{0,+}=a_0+1$, and $\iota=1$ if $a_{0,+}=a_0$.
Note that the right hand side is bounded by the right hand side of \eqref{eq:cover}, so the conclusion follows.
\end{proof}

Now by taking expectations of \eqref{eq:cover} and using Lemma \ref{lem:bound-oH}, we conclude that
\[
\E[H(n)]
< 10C_1(nG^{-2/3}w^{-1/2}+n/G)(\log(w))^{32}  + 2\E[\max_{(p,q)\in\Psi} H(p,q)].
\]
Suppose that $n>G$. By the induction hypothesis, for each $(p,q)\in\Psi$ and $M>0$, we have
\[
\P[H(p,q)>MAG^{1/3}w^{-1/2}(\log(w))^{32}+1] < C e^{-cM}.
\]
Note that $|\Psi|\le n/G+1$, so we have
\[
\E[\max_{(p,q)\in\Psi} H(p,q)] < c^{-1} (\log(C)+\log(n/G+1)+1)AG^{1/3}w^{-1/2}(\log(w))^{32} +1,
\]
so by taking $G=n/A$ and using that $n> Aw^{3/2}$, we get
\begin{align*}
\E[H(n)]
< & 10C_1(n^{1/3}A^{2/3}w^{-1/2}+A)(\log(w))^{32} \\ & + 2c^{-1} (\log(C)+\log(A+1)+1) A^{2/3}n^{1/3}w^{-1/2}(\log(w))^{32} + 2\\ < & 20(C_1+c^{-1}\log(C))A^{2/3}\log(A)n^{1/3}w^{-1/2}(\log(w))^{32}+2.
\end{align*}
By taking $A$ large enough (and in particular $20(C_1+c^{-1}\log(C))\log(A)<A^{0.001}$), we get the desired bound \eqref{eq:hbd-e} for $\E[H(n)]$.

\noindent\textbf{Exponential tails: proof of \eqref{eq:hbd-t}.}

Let $r=\lfloor A^{1/10} \rfloor$, and (without loss of generality and for simplicity of notations) we assume that $n/r$ is an integer. By the induction hypothesis, we have that for any $M>0$,
\begin{equation}  \label{eq:iext}
\P[H(n/r)\ge MAr^{-1/3}n^{1/3}w^{-1/2}(\log(w))^{32}+1]\leq Ce^{-cM}\,, \end{equation}
and 
\begin{equation}  \label{eq:iex}
\E[H(n/r)]\le A^{2/3+0.01}r^{-1/3}n^{1/3}w^{-1/2}(\log(w))^{32}+1.
\end{equation}
We have
\[H(n)\le \sum_{i=1}^r H_i\,,\]
where $H_i:=H((i-1)n/r,in/r)$ for $i=1,2,\ldots, t$. By translation invariance, $H_i$'s are independent with distributions being the same as $H(n/r)$ or $H(n/r-w^{3/2})$.
We then have
\begin{align}\label{e:twoparts}
  H(n)\le  \sum_{i=1}^{r}H_i \leq & \sum_{i=1}^r H_i\ind(H_i< Ar^{-1}n^{1/3}w^{-1/2}(\log(w))^{32})\nonumber\\& +\sum_{i=1}^r H_i\ind(H_i\ge Ar^{-1}n^{1/3}w^{-1/2}(\log(w))^{32})\nonumber\\
    \leq & An^{1/3}w^{-1/2}(\log(w))^{32}+\sum_{i=1}^rH_i\ind(H_i\ge Ar^{-1}n^{1/3}w^{-1/2}(\log(w))^{32})\,.
     \end{align}
By Markov inequality and \eqref{eq:iex}, we have
\begin{multline}\label{e:markov}
  \P[H_i\ge Ar^{-1}n^{1/3}w^{-1/2}(\log(w))^{32}]\leq \frac{\E[H_i]}{Ar^{-1}n^{1/3}w^{-1/2}(\log(w))^{32}}\leq\frac{\E[H(n/r)]}{Ar^{-1}n^{1/3}w^{-1/2}(\log(w))^{32}}\\ 
  \le \frac{A^{2/3+0.01}r^{-1/3}n^{1/3}w^{-1/2}(\log(w))^{32}+1}{Ar^{-1}n^{1/3}w^{-1/2}(\log(w))^{32}}
  \leq 2A^{-1/3+0.01}r^{2/3}\,.  
    \end{multline}
Let $X_i:=A^{-1}r^{1/3}n^{-1/3}w^{1/2}(\log(w))^{-32} H_i\ind (H_i\ge Ar^{-1}n^{1/3}w^{-1/2}(\log(w))^{32})$. Then using \eqref{eq:iext} and \eqref{e:markov}, for any $t>0$ we have
\[\P[X_i\ge t]\leq Ce^{-ct+c}\wedge 2A^{-1/3+0.01}r^{2/3}\,.\]
Thus, for any $0<a<c$, 
\begin{multline*}
\E[e^{aX_i}]=1+\int_0^\infty ae^{at}\P[X_i\geq t]\rd t \\
\le 1+\int_0^{c^{-1}\log(r)} 2ae^{at}A^{-1/3+0.01}r^{2/3}\rd t+\int_{c^{-1}\log(r)}^\infty ae^{at}Ce^{-ct+c}\rd t\leq 1+\frac{C'}{r^{c'}},
\end{multline*}
where $c'=1-ac^{-1}$ and $C'$ is some positive constant (depending on $C, c$ but independent of $A$). Choosing $a=c/4$ and using the inequality $1+z\leq e^z$ for all $z>0$, we have
\[\E[e^{cX_i/4}]\le 1+C'r^{-3/4}\le e^{C'r^{-3/4}}\,.\]

Finally, using \eqref{e:twoparts} and the Chernoff inequality, for $M>2$ we have
 \begin{align*}
     &\P[H(n)\ge MAn^{1/3}w^{-1/2}(\log(w))^{32}]\\ &\leq \P\left[\sum_{i=1}^r H_i\ind(H_i\ge Ar^{-1}n^{1/3}w^{-1/2})\ge
2^{-1}MAn^{1/3}w^{-1/2}(\log(w))^{32}\right]\\
&=\P\left[\sum_{i=1}^r X_i\ge 2^{-1}Mr^{1/3}\right]\\
&\le e^{-8^{-1}cMr^{1/3}}\prod_{i=1}^r\E[e^{cX_i/4}]\le e^{-8^{-1}cMr^{1/3}}e^{C'r^{1/4}}\le Ce^{-cM}\,,
 \end{align*}
where the last inequality is by choosing $A$ large enough thus $r$ large enough. For $M\le 2$ we just take $C$ large and $c$ small.
Thus we get the desired tail estimate \eqref{eq:hbd-t}.

\subsection{Barrier hitting probability}
In this subsection we prove Lemma \ref{lem:multi-dis-input}. We restate it as follows.

Let $U_{l,w}$ be the rectangle $\{(y-x, y+x): |x|<w, -2l\le y\le -l\}$.
Let $A_w=\{(-x, x): |x|<w\}$.
Let $\cE_{l,w}$ be the event where there is some $u \in U_{l,w}$ and $v\in \Z^2$ with $d(v)>l/10$, such that there is a $w^{1/2}$-near geodesic $\gamma$ from $u$ to $v$, with $\gamma\cap A_w\neq\emptyset$.
\begin{lemma}  \label{lem:multi-dis-input2}
There exists a universal constant $C>0$, such that $\P[\cE_{l,w}]<C(\log(w))^{32}wl^{-2/3}$, for any $l, w \in \N$ with $w^{3/2}<l<(w^{3/2})^{1.02}$.
\end{lemma}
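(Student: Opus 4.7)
The plan is to exploit coalescence, which says that the set of $\L_0$-crossings of (near) geodesics from $u \in U_{l,w}$ to far $v$'s is effectively only polylogarithmically large, so that the event probability reduces to the chance that one of these few crossings lands in the size-$w$ window $A_w$---and each does so with probability $O(w/l^{2/3})$ by transversal fluctuation.

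First I would reduce the near-geodesic event to a geodesic event through a slightly enlarged barrier. Set $w' := w(\log w)^K$ for a large constant $K$. Applying Proposition \ref{prop:trans-fluc} with $\phi = (\log w)^{K/3}$, together with the fluctuation bounds of Theorem \ref{t:onepoint}, one shows that on an event of complementary probability at most $e^{-c(\log w)^{3K}}$, any $w^{1/2}$-near geodesic from $u \in U_{l,w}$ to $v$ with $d(v)>l/10$ which hits $A_w$ must lie within the $(\log w)^{K/3} l^{2/3}$-tube around the segment $\overline{uv}$; in particular, the straight-line $\L_0$-crossing of $\overline{uv}$ lies in $[-w',w']$, and the actual geodesic $\Gamma_{u,v}$ itself crosses $\L_0$ inside the enlarged barrier $A_{w'}$.

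Next I would discretize $u$ at scale $w$ within $U_{l,w}$ and partition $v$'s by dyadic scales $d(v)\in[2^k l/10, 2^{k+1}l/10]$ for $k \ge 0$. For each scale $k$, the set of relevant $v$'s (those whose straight line from some discretized $u$ crosses $\L_0$ inside $[-w',w']$) forms an anti-diagonal strip of width $\sim 2^k w'$. I would then apply the BK-type entropy estimate behind Theorem \ref{thm:disjoint-near-geo} (in the spirit of \cite{BHS}) to show that the number of disjoint near-optimal paths from $U_{l,w}$ to these $v$'s crossing $A_{w'}$ is at most $N_0 := (\log w)^{K'}$ with probability at least $1 - e^{-c(\log w)^{K'}}$. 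Each such disjoint path has its $\L_0$-crossing on transversal-fluctuation scale $l^{2/3}$ by Lemma \ref{lemma:trans-fluc-one-pt}, so lies in $A_{w'}$ with probability $O(w'/l^{2/3})$. Combining, summing over the $O(\log w)$ nontrivially contributing dyadic scales, and choosing $K, K'$ with $K + K' \le 18$, yields $\P[\cE_{l,w}] \le C(\log w)^{20} w/l^{2/3}$.

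The main obstacle is the quantitative coalescence estimate: adapting Theorem \ref{thm:disjoint-near-geo} (which concerns geodesics between two parallel anti-diagonal segments) to the present setup, where the ``start'' is the anisotropic parallelogram $U_{l,w}$ of dimensions $l$ by $w$ and the ``end'' is an infinite union of dyadic scale strips of rapidly growing anti-diagonal extent, requires careful bookkeeping so that the disjoint-path count remains polylogarithmic in $w$ uniformly across all scales and that the tail bounds for the per-path transversal fluctuation survive the union across scales without blowing up.
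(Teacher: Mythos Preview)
Your proposal has a genuine gap at the step where you combine the coalescence bound with the per-path hitting probability. You write: ``Each such disjoint path has its $\L_0$-crossing on transversal-fluctuation scale $l^{2/3}$ by Lemma \ref{lemma:trans-fluc-one-pt}, so lies in $A_{w'}$ with probability $O(w'/l^{2/3})$.'' But Lemma \ref{lemma:trans-fluc-one-pt} gives the law of the $\L_0$-crossing of $\Gamma_{u,v}$ for \emph{fixed} endpoints $u,v$. The $N_0$ ``disjoint paths'' in your argument are not fixed: they are the random highways that survive coalescence, selected by the environment itself. The event that a particular highway exists and is one of the survivors is correlated with the noise near the diagonal that determines whether its $\L_0$-crossing falls in $A_{w'}$. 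There is no deterministic family of $N_0$ paths over which to take a union bound, and no independence between ``which paths survive'' and ``where they cross $\L_0$''. This step, as stated, does not go through.

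The paper sidesteps exactly this difficulty via translation invariance rather than a per-path estimate. It enlarges $U_{l,w}$ to the box $U_l$ of anti-diagonal width $l^{2/3}$, tiles $A_{l^{2/3}}$ by $\sim l^{2/3}/w$ translates $A_w^i$ of the barrier, and defines $\cN_{l,w}$ as the number of such translates hit by some near geodesic from $U_l$. The coalescence/ranking argument (using Proposition \ref{prop:multiple-peak} and Theorem \ref{thm:disjoint-near-geo}) shows $\E[\cN_{l,w}] < C(\log w)^{20}$. Since the translated events $\cE_{l,w}^i$ all have the same probability and $\sum_i \don[\cE_{l,w}^i] \le \cN_{l,w}$, one gets $\P[\cE_{l,w}] \le C(\log w)^{20} w l^{-2/3}$ by averaging. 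The factor $w/l^{2/3}$ thus comes from counting translates, not from a transversal-fluctuation estimate applied to random paths.

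A secondary issue: in your step 1, if a near geodesic lies in the $(\log w)^{K/3} l^{2/3}$-tube around $\overline{uv}$ and hits $A_w$, you can only conclude that the straight-line crossing is within $(\log w)^{K/3} l^{2/3}$ of $A_w$; since $l^{2/3}$ can be as large as $w^{1.02}$, this need not lie in $A_{w'}=A_{w(\log w)^K}$.
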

This obviously implies Lemma \ref{lem:multi-dis-input}.
To prove this, we use a translation invariance argument, and consider the following random variable.
We let $U_l=U_{l,l^{2/3}}$.
For each $i\in\Z$, we let $A_w^i=A_w + (2iw, -2iw)$.
We let $\cN_{l,w}$ be the number of $i\in \Z$, such that (i) $A_w^i\subset A_{l^{2/3}}$;
(ii) there is some $u \in U_l$ and $v\in \Z^2$ with $d(v)>l/10$, such that there is a $w^{1/2}$-near geodesic $\gamma$ from $u$ to $v$ with $\gamma\cap A_w^i\neq\emptyset$.
\begin{lemma}  \label{lem:cnlw-bd}
For any $l, w \in \N$ with $w^{3/2}<l<(w^{3/2})^{1.02}$, we have $\E[\cN_{l,w}]<C(\log(w))^{32}$, where $C>0$ is a universal constant.
\end{lemma}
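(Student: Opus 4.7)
My plan is to exploit the observation that any up-right path from a point with $d<0$ to one with $d>0$ crosses $\L_0$ at exactly one lattice point, and the barriers $\{A_w^i\}_{i\in\Z}$ tile the relevant portion of $\L_0$. Thus each $w^{1/2}$-near geodesic from $U_l$ to $\{v:d(v)>l/10\}$ hits exactly one barrier, so $\cN_{l,w}$ equals the number of distinct ``activated'' barriers $A_w^i\subset A_{l^{2/3}}$. I will reformulate this through the through-point profile $F(q):=\sup\{T_{u,q}+T_{q,v}: u\in U_l\cap\Z^2,\ d(v)\ge l/10\}-\xi(q)$ on $q\in\L_0$ with $F^*:=\sup_q F(q)$; then (modulo a transverse-fluctuation failure) $\cN_{l,w}$ is the number of barriers $A_w^i\subset A_{l^{2/3}}$ containing some $q$ with $F(q)\ge F^*-w^{1/2}$, reducing the task to counting $w$-separated near-maxima of $F$.

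First, I will use Proposition \ref{prop:trans-fluc} to show that every $w^{1/2}$-near geodesic of interest has its $\L_0$-crossing inside $A_{l^{2/3}}$ except on an event of superpolynomially small probability; together with the deterministic bound $\cN_{l,w}\le l^{2/3}w^{-1}$, this bad event contributes negligibly to $\E[\cN_{l,w}]$. Next I need a H\"older-$\tfrac12$ regularity for the profile: for $q_1,q_2\in A_{l^{2/3}}$ at anti-diagonal distance $\Delta$, $|F(q_1)-F(q_2)|\le (\log w)^{O(1)}\sqrt{\Delta}$ with probability $1-e^{-c(\log w)^2}$. I will derive this by splitting $F$ into a backward piece (from $U_l$ to $\L_0$) and a forward piece (from $\L_0$ to $\{d\ge l/10\}$), applying Lemma \ref{lem:lpp-devi} to each, and using Lemma \ref{lemma:trans-fluc-one-pt} together with Theorem \ref{t:supinf} to argue that the optimizers $u,v$ shift only slowly with $q$, so the supremum in the definition of $F$ may be replaced, up to a small error, by a maximum over a finite mesh that is handled by a union bound.

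Finally, I will count the near-maxima of $F$ by a dyadic chaining argument on $A_{l^{2/3}}$. On each dyadic scale $2^j w$ for $j=0,1,\ldots,O(\log l)$, I will partition $A_{l^{2/3}}$ into intervals of that length, use Theorem \ref{t:supinf} to bound the probability that $F$ attains a value within $w^{1/2}$ of $F^*$ on many such intervals, and combine with the H\"older continuity so that any length-$w$ near-maximum is forced to be consistent with a near-maximum at every larger scale. Summing the expected counts geometrically across scales will give $\E[\cN_{l,w}]\le C(\log w)^{20}$. The hardest step will be establishing the continuity bound on the supremum profile $F$ (as opposed to a passage time between a fixed pair of points): the meshing argument over $U_l$ and the forward line must be handled carefully, and it is there that the various log factors accumulate into the exponent $20$ that appears in the final estimate.
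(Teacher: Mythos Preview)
Your reformulation of $\cN_{l,w}$ in terms of near-maxima of a single profile $F(q)=\sup_{u,v}(T_{u,q}+T_{q,v}-\xi(q))$ does not work. First, as written $F(q)=+\infty$: the supremum over all $v$ with $d(v)>l/10$ is unbounded. More importantly, even after restricting $v$ to a fixed anti-diagonal (which the truncation argument you sketch would allow), the implication you need goes the wrong way. A barrier $A_w^i$ is activated when \emph{some} pair $(u,v)$ admits a $w^{1/2}$-near geodesic through it, i.e.\ $T_{u,q}+T_{q,v}-\xi(q)\ge T_{u,v}-w^{1/2}$ for some $q\in A_w^i$. But $T_{u,v}$ need not be close to $F^*=\sup_{u',v'}T_{u',v'}$: the starting points in $U_l$ span a range of order $l$ in the $d$-coordinate, so the passage times $T_{u,v}$ for different admissible pairs differ by amounts of order $l\gg w^{1/2}$. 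Hence an activated barrier need not contain any near-maximum of $F$, and counting near-maxima of $F$ only bounds a quantity that \emph{lower}-bounds $\cN_{l,w}$. Your H\"older/chaining program would at best control $|\{i:\sup_{q\in A_w^i}F(q)\ge F^*-w^{1/2}\}|$, which is the wrong side of the inequality.

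The paper's proof addresses precisely this obstruction. After the transverse-fluctuation localization (your $\cE_1$) it reduces to endpoints $(u,v)\in B_-\times B_+$, and for each \emph{fixed} such pair invokes Proposition~\ref{prop:multiple-peak} (the event $\cE_2$) to bound the number of $w$-separated near-maxima of the single-pair profile $q\mapsto T_{u,q}+T_{q,v}$ by $2(\log l)^{20}$. To combine the $O((Ml^{2/3})^2)$ pairs without paying that factor, it introduces a \emph{ranking}: for each pair the activated barriers are ordered by their constrained passage time, and each barrier is assigned the smallest rank it attains over all pairs. The key combinatorial claim is that rank-$k$ paths for different pairs cannot intersect on both sides of $\L_0$, so if many barriers shared a rank one would obtain many near-geodesics disjoint on one side of $\L_0$, contradicting Theorem~\ref{thm:disjoint-near-geo} (the event $\cE_3$). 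This ranking device, together with the Erd\H os--Szekeres step in bounding $\cE_3$, is the idea missing from your plan; without some mechanism to merge the per-pair bounds across all $(u,v)$, the argument cannot close.
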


Assuming this we can prove Lemma \ref{lem:multi-dis-input2}.
\begin{proof}[Proof of Lemma \ref{lem:multi-dis-input2}]
For each $i\in\Z$, we let $U_{l,w}^i=U_{l,w}+ (2iw, -2iw)$, and let $\cE_{l,w}^i$ be the event where there is some $u \in U_{l,w}^i$ and $v\in \Z^2$ with $d(v)>l/10$, such that there is a $w^{1/2}$-near geodesic $\gamma$ from $u$ to $v$ with $\gamma\cap A_w^i\neq\emptyset$.
By translation invariance we have $\P[\cE_{l,w}]=\P[\cE_{l,w}^i]$ for each $i\in\Z$.
Since $U^i_{l,w}\subset U_l$ if $2|i|w+w\le l^{2/3}$, we have that $\lfloor (2w)^{-1}l^{2/3} \rfloor \P[\cE_{l,w}] \le \E[\cN_{l,w}]$.
By Lemma \ref{lem:cnlw-bd} our conclusion follows.
\end{proof}
It remains to prove Lemma \ref{lem:cnlw-bd}, and we will need the following estimate, on the number of near geodesics between a pair of end points.

For any points $u\preceq v$, any $g>0$, $m\in\N$, and $h \in \Z$ with $d(u)<h<d(v)$, we let $\MP(u,v;h,g,m)$ be the following event: there exist $m$ points $w_1, \ldots, w_m \in \L_h$, such that
\begin{enumerate}
    \item $ad(w_{i+1})-ad(w_i)>g$, for each $i=1,\ldots, m-1$,
    \item $T_{u,w_i}+T_{w_i,v}-\xi(w_i) \ge T_{u,v}-\sqrt{g}$, for each $i=1,\ldots, m$; i.e. there is a $\sqrt{g}$-near geodesic from $u$ to $v$, passing through $w_i$.
\end{enumerate}
We also take $n\in\N$, and assume that $(n^{2/3})^{0.95}<g<n^{2/3}$ and $\log(n)^{20}<m<n^{0.05}$.
We would now bound the probability of such multiple peak events.
\begin{proposition} \label{prop:multiple-peak}
For any $\psi\in(0,1)$, there are constants $c,C>0$ depending on $\psi$, such that the following is true.
Take any $h,n,m\in \N$, $g>0$, and $u=(u_1,u_2)\in \L_n$, such that $g, m, n$ satisfy the above relations, $\psi n< h < (1-\psi)n$, and $\psi<\frac{u_1}{u_2}<\psi^{-1}$.
Then we have
\[
\P[\MP(\boo,u;h,g,m)]<Ce^{-c(\log(n))^2}.
\]
\end{proposition}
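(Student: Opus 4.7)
Define
\[ F(w) := T_{\boo,w} + T_{w,u} - \xi(w), \qquad w \in \L_h, \]
parametrised by the antidiagonal coordinate $a = \mathrm{ad}(w)$. Then $F(w)$ is the weight of the best up-right path from $\boo$ to $u$ through $w$, so $\max_{w \in \L_h} F(w) = T_{\boo,u}$, attained at the unique crossing $w_*$ of $\Gamma_{\boo,u}$ with $\L_h$, and $\MP(\boo,u;h,g,m)$ asks for $m$ antidiagonal-$g$-separated points $w_i$ at which $F(w_i) \ge T_{\boo,u} - \sqrt g$. My plan is: (i) localise the candidate near-peaks to a window of length $n^{2/3}(\log n)^{O(1)}$ about $w_*$; (ii) establish a Brownian-type modulus of continuity for $F$ on the window via Lemma \ref{lem:lpp-devi}; (iii) invoke the RSK correspondence and compare $F$ with a sum of Brownian bridges, for which a path-decomposition-at-the-maximum argument rules out $m > (\log n)^{20}$ well-separated $\sqrt g$-near-peaks.

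For (i), the deterministic mean $a \mapsto \E F(w(a))$ is concave with curvature $\Theta(n^{-1})$ near its maximum $a_*$, while Theorem \ref{t:onepoint} and Theorem \ref{t:supinf} bound $|F(w)-\E F(w)|$ by $(\log n)^{O(1)}n^{1/3}$ uniformly over $\L_h$ except on an event of probability $\le Ce^{-c(\log n)^2}$; thus any near-peak must have $|a - a_*| \le n^{2/3}(\log n)^{O(1)}$. For (ii), Lemma \ref{lem:lpp-devi} applied on dyadic sub-scales to $T_{\boo,\cdot}$ and, by the reflection symmetry of the field, to $T_{\cdot,u}$, together with the tail bound $\xi(w) \le C\log n$ for all $w \in \L_h$ (holding with the same exceptional probability), yields a good event $\cE$ with $\P[\cE^c] \le Ce^{-c(\log n)^2}$ on which
\[ |F(w(a)) - F(w(a'))| \le (\log n)^{10}\sqrt{|a-a'|} + O(\log n) \]
for all $a,a'$ in the window. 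Combined with the parabolic deterministic decrease $\Theta((a-a_*)^2/n)$, this already confines all near-peaks to a sub-window of length $O(g^{1/2}n^{1/3})$.

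The central step (iii) uses that, by RSK applied to the exponential field, the profile $a \mapsto T_{\boo, w(a)}$ restricted to the window is the top curve of a non-intersecting line ensemble, and conditionally on the second curve it is a random walk bridge with $\Exp$-type increments conditioned to stay above the second curve; the analogous statement holds for $a \mapsto T_{w(a), u}$ by time reversal. On a further good event of probability $\ge 1 - Ce^{-c(\log n)^2}$, obtained from Theorem \ref{t:supinf} and the disjoint-paths count of Theorem \ref{thm:disjoint-near-geo}, the second curve lies uniformly at distance $\ge n^{1/3}(\log n)^{C'}$ below its top over the window, so the conditioning is effectively neutralised. A KMT-type coupling then replaces each random walk bridge by a Brownian bridge of variance $\Theta(L)$ on the window of length $L$, and the two are independent as they arise from disjoint portions of the field. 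For the resulting sum of two Brownian bridges, Williams' path decomposition at the argmax represents the residual process as a pair of $3$-dimensional Bessel processes, and the small-ball bound $\P[\sup_{[0,t]} R \le x] \le e^{-ct/x^2}$ for Bessel$(3)$ shows that the $\sqrt g$-near-peak set of $F$ has diameter at most $g(\log n)^{O(1)}$ except with probability $Ce^{-c(\log n)^2}$. The number of $g$-separated points in such a set is at most $(\log n)^{O(1)}$, which contradicts $m > (\log n)^{20}$ once the implicit constants are chosen appropriately.

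The principal obstacle is the RSK-to-Brownian comparison: we need quantitative control of the gap between the top and second curves of both line ensembles (so that the Gibbs resampling allows the conditional distribution of the top curve to be compared with a free random walk bridge), together with a KMT coupling of that random walk bridge to a Brownian bridge, both accurate to $(\log n)^{O(1)}$ on the window. These steps are technical but essentially standard in the Brownian-Gibbs / line-ensemble literature for exponential LPP; the remaining ingredients (one-point Tracy--Widom tails, the modulus-of-continuity estimate, and the $3$-Bessel small-ball bound) are off-the-shelf.
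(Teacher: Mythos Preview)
Your steps (i) and (ii) are sound and mirror the paper's events $\cE_{trans}$ and $\cE_{devi}$. The genuine gap is in step (iii), at the claim that ``the second curve lies uniformly at distance $\ge n^{1/3}(\log n)^{C'}$ below its top over the window, so the conditioning is effectively neutralised.'' This is false: the gap $W^1-W^2$ between the top two RSK curves is typically of order $n^{1/3}$, not polylogarithmically larger, while the free bridge on a window of length $n^{2/3}(\log n)^{O(1)}$ has fluctuations of the same order $n^{1/3}(\log n)^{O(1)}$. So the Gibbs conditioning is \emph{not} negligible, and the conditional law of the top curve is not close in total variation to a free bridge. Theorem~\ref{thm:disjoint-near-geo} does not help here: it rules out \emph{many} disjoint near-geodesics (for $N>N_0$), whereas a lower bound on $W^1-W^2$ would require ruling out the existence of merely \emph{two} good disjoint paths.

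The paper circumvents this entirely. Rather than neutralising the conditioning, it uses the crude bound $\P[A\mid B]\le \P[A]/\P[B]$ with $A$ the multi-peak event for the \emph{free} bridge pair and $B$ the event that the free bridges stay above the respective second curves. Lemma~\ref{lem:S-multi-peak} gives $\P[A]\le Ce^{-cm}$ directly (via Skorokhod embedding and a Brownian stopping-time argument, no Bessel decomposition needed). For the denominator, one does not need $W^2$ to be far below $W^1$; one only needs $W^2$ to lie below the chord of $W^1$ plus a parabolic envelope $(\log n)^8\sqrt{i(r-l-i)/(r-l)}$, which follows from Lemma~\ref{lem:lpp-devi} since $W^2\le W^1$. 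Lemma~\ref{lem:S-low-bd} then shows the free bridge stays above that envelope with probability $\ge e^{-C(\log n)^{19}}$. Combining, $\P[\MP[l,r]\mid\cF]\le Ce^{-cm+C(\log n)^{19}}$, and the hypothesis $m>(\log n)^{20}$ absorbs the loss. If you want to salvage your approach, you would need a genuine Brownian absolute-continuity statement for the top curve (in the spirit of Hammond's Brownian Gibbs regularity), which is substantially heavier machinery than what is available here.
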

The proof of this proposition is via a connection between the point-to-line passage time profiles and random walk bridges, by the RSK correspondence.
We leave the proof to Section \ref{s:multi}, and prove Lemma \ref{lem:cnlw-bd} now.
The general idea is to rank the near geodesics to apply Proposition \ref{prop:multiple-peak}, and use Theorem \ref{thm:disjoint-near-geo} as a key input.
\begin{proof}[Proof of Lemma \ref{lem:cnlw-bd}]
We denote $\ol=\lfloor l/20\rfloor$, for simplicity of notations.
We shall let $C, c>0$ denote large and small universal constants, and the values can change from line to line.

For simplicity of notations, we take $\oM=(\log(l))^{12}$ and $M=cM^{1/33}$.

Let $B_-$ be the segment whose end points are $(-\ol-Ml^{2/3}, -\ol+Ml^{2/3})$ and $(-\ol+Ml^{2/3}, -\ol-Ml^{2/3})$, and let $B_+=B_-+(2\ol, 2\ol)$.
We consider the following events.
\begin{itemize}
    \item $\cE_1$: there is a $w^{1/2}$-near geodesic $\gamma$ from $U_l$ to $\L_{2\ol}$, satisfying that $\gamma\cap A_{l^{2/3}}\neq\emptyset$ and $\gamma$ is disjoint from $B_-$ or $B_+$.
    \item $\cE_2$: there is some $u\in B_-$ and $v\in B_+$, such that $\MP(u,v;0,w,\lceil(\log(l))^{20}\rceil)$ holds.
    \item $\cE_3$: there are $\oM$ $w^{1/2}$-near geodesics from $B_-$ to $B_+$, denoted as $\gamma_1, \ldots, \gamma_{\oM}$, such that for any $1\le j<k\le \oM$, the intersection $\gamma_j\cap \gamma_k$ is either empty, or contained below $\L_0$, or contained above $\L_0$.
\end{itemize}
We now assume that $\cE_1^c \cap \cE_2^c \cap \cE_3^c$ holds, and bound $\cN_{l,w}$.
This is achieved by ranking the near geodesics, as follows.

\begin{figure}[hbt!]
    \centering
\begin{tikzpicture}[line cap=round,line join=round,>=triangle 45,x=6cm,y=6cm]
\clip (-0.1,-0.1) rectangle (1.1,1.1);

\draw (0,0.4) -- (0.4,0);
\draw (1,0.6) -- (0.6,1);

\foreach \i in {1,...,20}
{
\draw[|-|] (1 - \i/20, \i/20) -- (1.05 - \i/20, \i/20-0.05);
}

\draw [red,smooth] plot coordinates {(0.2,0.2) (0.33,0.67) (0.7,0.9) };
\draw [red,smooth] plot coordinates {(0.28,0.12) (0.63,0.37) (0.8,0.8) };
\draw [red,smooth] plot coordinates {(0.11,0.29) (0.43,0.57) (0.9,0.7) };

\draw [blue,smooth] plot coordinates {(0.2,0.2) (0.62,0.38) (0.7,0.9) };
\draw [blue,smooth] plot coordinates {(0.28,0.12) (0.67,0.33) (0.8,0.8) };
\draw [blue,smooth] plot coordinates {(0.11,0.29) (0.27,0.73) (0.9,0.7) };

\draw [darkgreen,smooth] plot coordinates {(0.2,0.2) (0.42,0.58) (0.7,0.9) };
\draw [darkgreen,smooth] plot coordinates {(0.28,0.12) (0.57,0.43) (0.8,0.8) };
\draw [darkgreen,smooth] plot coordinates {(0.11,0.29) (0.53,0.47) (0.9,0.7) };

\draw[blue, ultra thick] (0.255,0.745) -- (0.295,0.705);
\draw[red, ultra thick] (0.305,0.695) -- (0.345,0.655);
\draw[red, ultra thick] (0.405,0.595) -- (0.445,0.555);
\draw[darkgreen, ultra thick] (0.505,0.495) -- (0.545,0.455);
\draw[darkgreen, ultra thick] (0.595,0.405) -- (0.555,0.445);
\draw[red, ultra thick] (0.605,0.395) -- (0.645,0.355);
\draw[blue, ultra thick] (0.695,0.305) -- (0.655,0.345);

\begin{scriptsize}
\draw (0.,1) node[anchor=east]{$\L_0$};
\draw (0.,0.4) node[anchor=east]{$B_-$};
\draw (0.6,1) node[anchor=east]{$B_+$};
\end{scriptsize}

\draw [fill=uuuuuu] (0.2,0.2) circle (1.0pt);
\draw [fill=uuuuuu] (0.28,0.12) circle (1.0pt);
\draw [fill=uuuuuu] (0.11,0.29) circle (1.0pt);
\draw [fill=uuuuuu] (0.7,0.9) circle (1.0pt);
\draw [fill=uuuuuu] (0.8,0.8) circle (1.0pt);
\draw [fill=uuuuuu] (0.9,0.7) circle (1.0pt);

\end{tikzpicture}
\caption{An illustration of the proof of Lemma \ref{lem:cnlw-bd}. The line $\L_0$ is divided into $A_w^i$ for $i\in \Z$.
The red, blue, green curves represent paths of rank-1, rank-2, rank-3 respectively, for the corresponding end points. The red, blue, green segments represent segments of rank-1, rank-2, rank-3 respectively.
The rank of a segment is the smallest rank of all paths passing through it.
}
\label{fig:rank}
\end{figure}
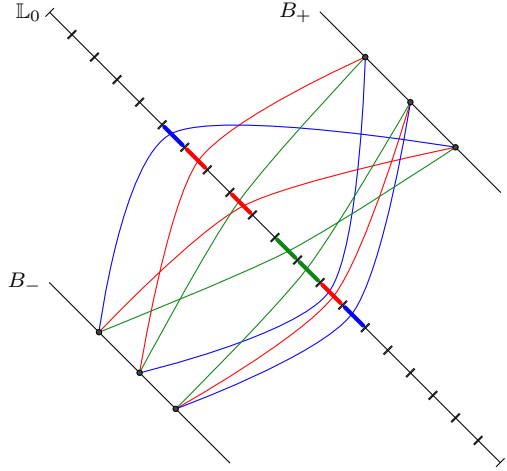

For each $u \in B_-$ and $v\in B_+$, and $i\in \Z$ with $A_w^i \subset A_{l^{2/3}}$, we let \[T_{u,v,i}=\max_{\gamma \text{ from $u$ to $v$, }, \gamma\cap A_w^i \neq \emptyset} T(\gamma).\]
In other words, $T_{u,v,i}$ is the maximum passage time from $u$ to $v$, for paths passing through $A_w^i$.
We also let $J[u,v]=\{i: T_{u,v,i}\ge T_{u,v}-w^{1/2}\}$, the set of indices of segments that intersect with some $w^{1/2}$-near geodesic from $u$ to $v$.
By $\cE_2^c$ we have $|J[u,v]|<2(\log(l))^{20}$.
We let $\iota[u,v,k]\in J[u,v]$ denote the index of the `$k$-th largest segment' for $1\leq k \leq |J[u,v]|$. In other words, we must have
\[
k = \big|\{i\in J[u,v]: T_{u,v,i}\ge T_{u,v,\iota[u,v,k]}\}\big|.
\]
The up-right path from $u$ to $v$ passing through $A^{\iota[u,v,k]}_w$ with the maximum weight is called the rank-$k$ path from $u$ to $v$.
We note that the rank-$1$ path from $u$ to $v$ is just the geodesic $\Gamma_{u,v}$, unless $\Gamma_{u,v}$ is disjoint from $\bigcup_{A^i_w\subset A_{l^{2/3}}} A^i_w$.

For each $i\in \Z$ with $A_w^i \subset A_{l^{2/3}}$, if $i\notin \bigcup_{u\in B_-, v\in B_+}J[u,v]$, we let $\kappa[i]=\infty$; otherwise, we let $\kappa[i]$ be the smallest number, such that $i=\iota[u,v,\kappa[i]]$ for some $u\in B_-$, $v\in B_+$. In other words, $\kappa[i]$ is the smallest rank of path passing through $A_w^i$ (from some $u\in B_-$ to some $v\in B_+$); and we  call $\kappa[i]$ the rank of $A_w^i$.
By $\cE_2^c$ we have that $\kappa[i] < 2(\log(l))^{20}$ if $\kappa[i]<\infty$.

Suppose that there is some $k_* \in \N$ such that $|\{i\in \Z: A_w^i \subset A_{l^{2/3}}, \kappa[i]=k_*\}| \ge \oM$.
We can then find distinct $i_1,\ldots, i_{\oM} \in \Z$, such that for each $1\le j \le \oM$ the following is true: (i) $A_w^{i_j}\subset A_{l^{2/3}}$; (ii) there is some $u_j\in B_-$ and $v_j \in B_+$, such that $i_j=\iota[u_j,v_j,k_*]$; (iii)
\begin{equation}  \label{eq:ijnotin}
i_j \not\in \{ \iota[u,v,k]: u\in B_-, v\in B_+, 1\le k<k_* \}.
\end{equation}
We let $\gamma_j$ be the path from $u_j$ to $v_j$ with $\gamma_j\cap A_w^{i_j}\neq \emptyset$ and $T(\gamma_j)=T_{u_j,v_j,i_j}\ge T_{u_j,v_j}-w^{1/2}$.
\begin{claim}
For any $1\le j<j'\le \oM$, the intersection $\gamma_j\cap \gamma_{j'}$ is either empty, or contained below $\L_0$, or contained above $\L_0$.
\end{claim}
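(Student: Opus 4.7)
The plan is to argue by contradiction via a swap of paths. Suppose to the contrary that some $p_- \in \gamma_j \cap \gamma_{j'}$ satisfies $d(p_-) < 0$ and some $p_+ \in \gamma_j \cap \gamma_{j'}$ satisfies $d(p_+) > 0$. I would then form swapped paths: let $\gamma_j'$ be the concatenation of the portion of $\gamma_j$ from $u_j$ to $p_-$, the portion of $\gamma_{j'}$ from $p_-$ to $p_+$, and the portion of $\gamma_j$ from $p_+$ to $v_j$; define $\gamma_{j'}'$ symmetrically by exchanging the roles of $j$ and $j'$.

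The first step is geometric. Since $d$ strictly increases by one at each up-right step, $d$ is strictly increasing along each piece and across the junctions at $p_\pm$, so $\gamma_j'$ and $\gamma_{j'}'$ are simple up-right paths from $u_j$ to $v_j$ and from $u_{j'}$ to $v_{j'}$ respectively. Because $\gamma_{j'}$ crosses $\L_0$ at its unique intersection point $q_{j'} \in A_w^{i_{j'}}$, which by monotonicity of $d$ along $\gamma_{j'}$ lies strictly between $p_-$ and $p_+$, the middle piece of $\gamma_j'$ passes through $A_w^{i_{j'}}$; symmetrically $\gamma_{j'}'$ passes through $A_w^{i_j}$.

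The second step is the weight identity. A vertex-by-vertex accounting shows that every vertex contributes to $T(\gamma_j)+T(\gamma_{j'})$ with the same multiplicity as to $T(\gamma_j')+T(\gamma_{j'}')$, since the swap merely reassigns which portions of each path are attributed to which label. This gives
\[
T(\gamma_j') + T(\gamma_{j'}') \;=\; T(\gamma_j) + T(\gamma_{j'}) \;=\; T_{u_j,v_j,i_j} + T_{u_{j'},v_{j'},i_{j'}}.
\]
Combining with $T(\gamma_j') \le T_{u_j,v_j,i_{j'}}$ and $T(\gamma_{j'}') \le T_{u_{j'},v_{j'},i_j}$ yields
\[
\bigl(T_{u_j,v_j,i_{j'}} - T_{u_j,v_j,i_j}\bigr) \,+\, \bigl(T_{u_{j'},v_{j'},i_j} - T_{u_{j'},v_{j'},i_{j'}}\bigr) \;\ge\; 0.
\]
Almost surely the values $T_{u,v,i}$ are distinct for different $i$ (because the weights $\xi(\cdot)$ are continuous), so at least one of the two summands is strictly positive.

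The final step converts this into a contradiction with the rank structure. If $T_{u_j,v_j,i_{j'}} > T_{u_j,v_j,i_j}$, then $A_w^{i_{j'}}$ outranks $A_w^{i_j}$ in the $(u_j,v_j)$-ranking, and since $i_j$ has rank $k_*$ there the rank of $i_{j'}$ at $(u_j,v_j)$ is strictly less than $k_*$, giving $\kappa[i_{j'}] < k_*$ and contradicting $\kappa[i_{j'}] = k_*$. If instead $T_{u_{j'},v_{j'},i_j} > T_{u_{j'},v_{j'},i_{j'}}$, the symmetric argument shows $i_j = \iota[u_{j'},v_{j'},k]$ for some $k < k_*$, directly contradicting \eqref{eq:ijnotin}. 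In either case the dominating index automatically lies in the relevant $J[u,v]$, since its passage-time score exceeds that of an index already in $J[u,v]$. I expect the only delicate points in the write-up to be the multiset bookkeeping that justifies the weight identity and the a.s.\ treatment of ties among the $T_{u,v,i}$.
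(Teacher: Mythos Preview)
Your proposal is correct and follows essentially the same path-swapping idea as the paper: find intersection points $p_-,p_+$ on either side of $\L_0$, rearrange the middle segments, and use the resulting higher-weight path through the ``wrong'' segment $A_w^{i_{j'}}$ (or $A_w^{i_j}$) to contradict the rank-$k_*$ choice via \eqref{eq:ijnotin}. The paper's write-up is marginally more direct---it compares only the two middle pieces, assumes WLOG one is heavier, and does a single swap rather than your symmetric double swap with the additive identity---but the content is the same; your explicit mention of a.s.\ distinctness of the $T_{u,v,i}$ is something the paper uses implicitly when defining the ranking.
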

With this claim we get a contradiction with $\cE_3^c$, thus we conclude that $|\{i\in \Z: A_w^i \subset A_{l^{2/3}}, \kappa[i]=k\}| < \oM$ for any $k\in \N$.
\begin{proof}[Proof of the claim]
We argue by contradiction, and assume that this is not true for some $1\le j<j'\le \oM$.
Since $\gamma_j\cap \L_0 \subset A_w^{i_j}$ and $\gamma_{j'}\cap \L_0 \subset A_w^{i_{j'}}$, we can find some $u_*, v_* \in \gamma_j \cap \gamma_{j'}$, such that $u_*$ is below $\L_0$ and $v_*$ is above $\L_0$.
Let $\gamma_*$ be the part of $\gamma_j$ between $u', v'$, and let $\gamma_*'$ be the part of $\gamma_{j'}$ between $u', v'$.
Without loss of generality we assume that $T(\gamma_*)<T(\gamma_*')$. 
Then we consider the path $(\gamma_j \setminus \gamma_*)\cup \gamma_*'$: it is from $u_j$ to $v_j$, passing $\L_0$ through $A_w^{i_{j'}}$, and its weight is $>T(\gamma_j)=T_{u_j,v_j,i_j}$.
Then we have that $T_{u_j,v_j,i_{j'}}>T_{u_j,v_j,i_{j}}$, so $i_{j'} \in \{ \iota[u_j,v_j,k]: 1\le k<k_* \}$, which contradicts equation~\eqref{eq:ijnotin}. 
\end{proof}

Now we conclude that $\cN_{l,w}\le \bigcup_{u\in B_-, v\in B_+}|J[u,v]| < 2\oM(\log(l))^{20}$, under the event $\cE_1^c \cap \cE_2^c \cap \cE_3^c$.

By Proposition \ref{prop:multiple-peak} we have $\P[\cE_2]<Ce^{-c(\log(l))^2}$. Below we will prove that $\P[\cE_1]<Cl^{8/3}e^{-cM^3}$ and $\P[\cE_3]<Ce^{-cM^3}$.
Then we conclude that $\P[\cN_{l,w}\ge 2\oM(\log(l))^{20}] < Ce^{-c(\log(l))^2} + Cl^{8/3}e^{-c\oM^{1/11}}$.
Thus we get the conclusion, using that $\oM=(\log(l))^{12}$, and the fact that there is always $\cN_{l,w} < w^{-1}l^{2/3}$.
\end{proof}
\begin{proof}[Bound for $\cE_1$]
We first bound the probability that there is a $w^{1/2}$-near geodesic $\gamma$ from $U_l$ to $\L_{2\ol}$, satisfying that $\gamma\cap A_{l^{2/3}}\neq\emptyset$ and $\gamma$ is disjoint from $B_+$.

We take any $u\in U_l$ and $v\in\L_{2\ol}\setminus B_+$, and consider the probability that there is a $w^{1/2}$-near geodesic from $u$ to $v$ that intersects $A_{l^{2/3}}$.
When $|ad(v)|>2\ol+2l^{2/3}$, there is no up-right path from $A_{l^{2/3}}$ to $v$.
We have that, whenever $|ad(v)|\le 2\ol+2l^{2/3}$, this probability is at most $Ce^{-cM^3}$.
This is ensured by Proposition \ref{prop:trans-fluc} and Theorem \ref{t:onepoint} (note that the slope conditions are ensured by $|ad(v)|\le 2\ol+2l^{2/3}$).
Then by taking a union bound we get $Cl^{8/3}e^{-cM^3}$.

We next bound the probability that there is a $w^{1/2}$-near geodesic $\gamma$ from $U_l$ to $\L_{2\ol}$, satisfying that $\gamma\cap A_{l^{2/3}}\neq\emptyset$ and $\gamma$ is disjoint from $B_-$.

Now we take any $u\in U_l$ and $v\in A_{l^{2/3}}$, and consider the probability that there is a $w^{1/2}$-near geodesic from $u$ to $v$ that is disjoint from $B_-$.
By Proposition \ref{prop:trans-fluc} and Theorem \ref{t:onepoint}, this probability is at most $Ce^{-cM^3}$.
Then by a union bound over all such $u$ and $v$ we get $Cl^{7/3}e^{-cM^3}$.
\end{proof}

\begin{proof}[Bound for $\cE_3$]
Let $N=\lfloor \sqrt{\oM} \rfloor$, and let $B_0$ be the segment whose end points are $(-2Ml^{2/3}, 2Ml^{2/3})$ and $(2Ml^{2/3}, -2Ml^{2/3})$.
We consider the following events.
\begin{enumerate}
    \item $\cE_{3,1}$: there exists a $w^{1/2}$-near geodesic $\gamma$ from $B_-$ to $B_+$, and $\gamma$ is disjoint from $B_0$.
    \item $\cE_{3,2}$: there exist $N$ mutually disjoint $w^{1/2}$-near geodesics from $B_-$ to $B_0$.
    \item $\cE_{3,2}$: there exist $N$ mutually disjoint $w^{1/2}$-near geodesics from $B_0$ to $B_+$.
\end{enumerate}
By splitting $B_-$ and $B_+$ into segments of length $\ol^{2/3}$, and applying Theorem \ref{t:supinf} and Proposition \ref{prop:trans-fluc} to end pair of them, we have $\P[\cE_{3,1}]\le Ce^{-cM^3}$.
By Theorem  \ref{thm:disjoint-near-geo} (note that $M$ is much smaller than $N^{1/16}$) we have that $\P[\cE_{3,2}], \P[\cE_{3,3}]\le e^{-cN^{1/4}}$.
We next show that $\cE_3\setminus \cE_{3,1}\subset \cE_{3,2}\cup\cE_{3,3}$.

Assuming $\cE_3\setminus \cE_{3,1}$, we can find $w^{1/2}$-near geodesics $\gamma_1, \ldots, \gamma_{\oM}$ from $B_-$ to $B_+$, and each $\gamma_k$ intersects $B_0$; and for any $1\le j<k\le \oM$, the intersection $\gamma_j\cap \gamma_k$ is either empty, or contained below $\L_0$, or contained above $\L_0$.
Let $(i_k,-i_k) = \gamma_k\cap \L_0$ for each $1\le k \le \oM$, we then assume that $i_1<\cdots <i_{\oM}$.

For each $1\le j \le \oM$, let $\delta_j$ be the maximum number, such that there are $1\le k_1 <\cdots < k_{\delta_j} = j$, with $\gamma_{k_1}, \ldots, \gamma_{k_{\delta_j}}$ being mutually disjoint below $\L_0$.
If there is some $\delta_j \ge N$, the event $\cE_{3,2}$ holds.
Otherwise, since $\oM\ge (N-1)^2+1$, we can find some $j_1\le \cdots \le j_N$ such that $\delta_{j_1}=\cdots = \delta_{j_N}$.
Then we must have that $\gamma_{j_1}, \ldots, \gamma_{j_N}$ intersect with each other below $\L_0$.
This implies that $\gamma_{j_1}, \ldots, \gamma_{j_N}$ are mutually disjoint above $\L_0$, and $\cE_{3,3}$ holds.

Finally we conclude that $\P[\cE_3]\le \P[\cE_{3,1}]+\P[\cE_{3,2}]+\P[\cE_{3,3}]\le Ce^{-cM^3}$.
\end{proof}

\section{Proof of the main theorem} \label{s:main}
Throughout we assume that $0<\e<1/2$.
Let $Z^\varepsilon_{u,v}:=T_{u,v}^\e-T_{u,v}$ denote the difference in the weights of the geodesic $\Gamma^\e_{u,v}$ and $\Gamma_{u,v}$, and let $Z_{n}^\e:=Z^\e_{\boo,\bn}=T^\e_n-T_n$.

\begin{theorem}\label{t:induc}
For all $k\in \N$, $k\ge 200$, there exists some $\e_k>0$ such that for all $0<\e<\e_k$, and $n\le n_k$ with $n_k=\lfloor\e^{-k/200}\rfloor$,
\[\P\left[Z_n^\e>\varepsilon^{1/3} n_k^{1/3}(\log(n_k))^{50k}\right]\leq e^{-c(\log(k))^{-1}(\log(\e^{-1}))^2}\,,\]
for some universal constant $c>0$.
\end{theorem}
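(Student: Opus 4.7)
The plan is to prove the theorem by induction on $k$, with base case $k=200$. Denoting the diagonal by $D:=\{(j,j):j\in\Z\}$, the pivotal observation is that since the reinforced weights dominate the original ones we have $T_n^\e\ge T_n$, and writing $R(\Gamma_n^\e):=\sum_{v\in\Gamma_n^\e\cap D}\varrho(v)\xi'(v)$ for the reinforcement accumulated along the perturbed geodesic, one obtains $Z_n^\e = T(\Gamma_n^\e)+R(\Gamma_n^\e)-T_n\le R(\Gamma_n^\e)$ and $T_n-T(\Gamma_n^\e)=R(\Gamma_n^\e)-Z_n^\e\in[0,R(\Gamma_n^\e)]$, so that $\Gamma_n^\e$ is itself an $R(\Gamma_n^\e)$-near-geodesic in the \emph{unperturbed} field. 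This is exactly the setting in which the near-geodesic local-time estimate of Theorem~\ref{thm:near-geo-loc} applies, and the task therefore reduces to bounding $R(\Gamma_n^\e)$.

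For the base case $k=200$, $n\le\lfloor\e^{-1}\rfloor$ and the target $\e^{1/3}n_{200}^{1/3}\log^{10000}(n_{200})$ is polylogarithmic in $\e^{-1}$; bounding $R(\Gamma_n^\e)$ crudely by $\sum_{j=0}^{n}\varrho((j,j))\xi'((j,j))$---a sum of at most $\e^{-1}$ independent $\mathrm{Bernoulli}(\e)\times\mathrm{Exp}(1-\e)$ variables of total mean $O(1)$---and applying Bernstein's inequality yields the desired tail $e^{-c(\log\e^{-1})^2}$ with substantial slack.

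For the inductive step $k\mapsto k+1$, fix $n\le n_{k+1}=n_k\e^{-1/200}$. The plan proceeds in two phases. In the first, decompose $\Gamma_n^\e$ at the intersections $u_j^\e=\Gamma_n^\e\cap\L_{jn_k}$ with antidiagonals spaced $n_k$ apart; since by optimality each restriction $\Gamma_n^\e|_{[u_{j-1}^\e,u_j^\e]}$ is the perturbed geodesic between its endpoints, a union bound applied to the inductive hypothesis over all lattice pairs $(u,v)$ with $d(v)-d(u)=n_k$ (of which there are $\operatorname{poly}(n_{k+1})$ many, a factor absorbed into the tail upon choosing $\e_k$ so that $\log\e^{-1}\gg k\log k$) gives simultaneous control $Z_{u_{j-1}^\e,u_j^\e}^\e\le\e^{1/3}n_k^{1/3}\log^{50k}(n_k)$; combined with the a priori bound $R(\Gamma_n^\e)\le O(\e n\log\e^{-1})$ from Bernstein applied to all potential diagonal reinforcements, this yields a first loose bound $R(\Gamma_n^\e)\le M_0$. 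In the second phase, since $\Gamma_n^\e$ is then an $M_0$-near-geodesic in the unperturbed field, apply Theorem~\ref{thm:near-geo-loc} at a scale $w\ge M_0^2$ (and iteratively at smaller nested sub-scales) to convert the estimate on the number of barriers hit into a sharp bound $|\Gamma_n^\e\cap D|\le Cn^{1/3}\log^{50(k+1)}(n_{k+1})$ on the diagonal local time. Finally, conditioning on the unperturbed field and applying Bernstein to $R(\Gamma_n^\e)=\sum_{v\in\Gamma_n^\e\cap D}\varrho(v)\xi'(v)$---the measurability issue between $\Gamma_n^\e$ and $\{\varrho,\xi'\}$ handled by a union bound over the finitely many candidate near-geodesics afforded by Proposition~\ref{prop:multiple-peak}---yields $R(\Gamma_n^\e)\le C\e\,n^{1/3}\log^{50(k+1)}(n_{k+1})\le\e^{1/3}n_{k+1}^{1/3}\log^{50(k+1)}(n_{k+1})$ with the required tail.

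The main obstacle is the bootstrap in the second phase: the near-geodesic parameter $x=R(\Gamma_n^\e)$ appears on both sides of the estimate, and closing the loop requires a delicate choice of the scale $w$ together with an iterated application of Theorem~\ref{thm:near-geo-loc} across nested sub-scales in order to convert ``number of barriers hit'' into ``number of diagonal vertices visited'' with exactly the right polylog growth. Equally, handling the dependence between $\Gamma_n^\e$ and the perturbation variables via the near-geodesic multiplicity estimate, and tracking constants so that precisely one factor of $\log^{50}(n_{k+1})$ is gained per inductive step, are the most intricate aspects of the argument.
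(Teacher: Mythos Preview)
Your broad plan---induction on $k$, block decomposition at scale $n_k$, and an appeal to the near-geodesic barrier estimate---matches the paper, but the execution of the inductive step has genuine gaps.

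First, the circularity you flag is not actually resolved. The near-geodesic parameter of $\Gamma_n^\e$ in the unperturbed field is $R(\Gamma_n^\e)-Z_n^\e$, and your only upper bound on $R(\Gamma_n^\e)$ is the crude $O(\e n\log\e^{-1})$; for $n=n_{k+1}$ and $k\gtrsim 300$ one has $\e n_{k+1}\gg n_{k+1}^{1/3}$, so no admissible barrier width $w$ in Theorem~\ref{thm:near-geo-loc} satisfies both $x\le w^{1/2}$ and $w^{3/2}\le n$. The paper breaks this loop differently: it constructs an auxiliary path $\gamma_*$ by replacing, in each block where $\Gamma_n^\e$ touches the diagonal, the segment between the first and last diagonal hits $u[i],v[i]$ with the \emph{unperturbed} geodesic $\Gamma_{u[i],v[i]}$. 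Since $T_n^\e-T(\gamma_*)=\sum_{\text{bad blocks}} Z^\e_{u[i],v[i]}$ and each summand is controlled by the induction hypothesis (the endpoints lie on the diagonal, which is exactly what Theorem~\ref{t:induc} covers), $\gamma_*$ is an $\e^{1/4}n_k^{1/3}$-near geodesic---well inside the range of Theorem~\ref{thm:near-geo-loc} with $w=(n/L)^{2/3}$---and it shares the same intersections with each $\L_{2in/L}$ as $\Gamma_n^\e$.

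Second, the paper never bounds $|\Gamma_n^\e\cap D|$, never iterates Theorem~\ref{thm:near-geo-loc} at nested scales, and never uses Bernstein in the inductive step. Instead it proves a short dedicated Lemma~\ref{lem:cars}: a near-geodesic whose entry point to a block lies outside the barrier $(\log n)^5 B_i$ cannot touch the diagonal inside that block. Combined with the barrier count for $\gamma_*$, this shows $\Gamma_n^\e$ touches the diagonal in at most $F\sim(\log n_k)^{40}L^{1/3}$ blocks, whence $Z_n^\e\le T_n^\e-T(\gamma_*)\le F\cdot\e^{1/3}n_k^{1/3}\log^{50k}(n_k)$ directly. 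Your proposed ``iteration at smaller nested sub-scales'' cannot reach $w=O(1)$ because of the constraint $n^{0.99}<w^{3/2}$, and your final Bernstein step is not salvaged by Proposition~\ref{prop:multiple-peak}: that proposition counts well-separated near-maxima along a single antidiagonal, not near-geodesic \emph{paths}, of which there are exponentially many, so the union bound you propose is not available.
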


\begin{figure}[h]
\centering
\includegraphics[width=3.5in]{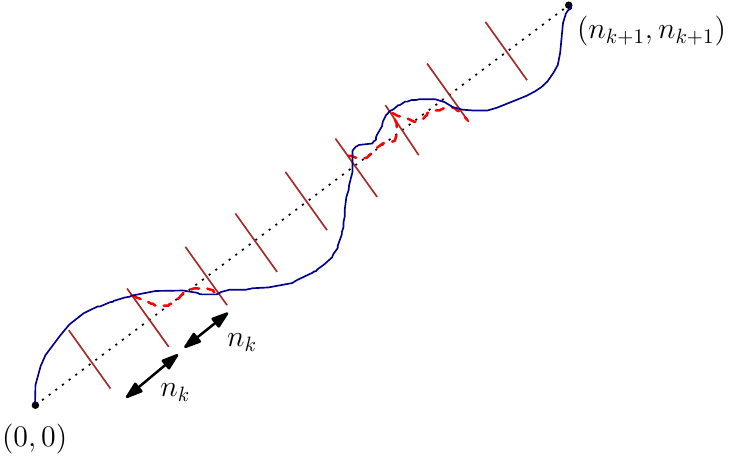}
\caption{This is an illustration of the proof of Theorem \ref{t:induc}. The total length $n_{k+1}$ is divided into $L$ blocks of length $n_k$ each. The brown segments along the anti-diagonal represent the $L$ barriers. The number of barriers hit by the original geodesic is of the order of $L^{1/3}$. If the geodesic does not hit the barrier, it is very unlikely it will hit the diagonal in between. If the geodesic hits a barrier, there may be an alternate path in that respective block that passes through the diagonal in the reinforced environment; however the maximum improvement in such a block is controlled by the induction hypothesis.}
\label{F.1}
\end{figure}

\begin{proof}
The general strategy of this proof follows the sketch given at the end of Section \ref{sec:psk}.
We leave $c$ and $\e_k$ to be determined, and let $C$ denote a large constant throughout the proof.
Without loss of generality and for the simplicity of notation, we assume that $\e^{-1/200}$ is an integer, and denote $L:=\varepsilon^{-1/200}$.

We prove by induction. In the base case where $k=200$, we have $n_k= \e^{-1}$. Since by our coupling, $Z_{n_k}^\e$ is at most the sum of $n_k$ independent random variables, each distributed as $\rho\xi$ with $\rho$ a Bernoulli($\e$) random variable and $\xi$ an independent Exp($1-\e$) random variable, the bound above follows from the usual Chernoff bound for exponential random variables.

Now we take $k\ge 200$.
Denote $c_k=c(\log(k))^{-1}$. By the induction hypothesis, we assume that for $\e>0$ small enough (depending on $k$) and all $n\le n_k$,
\[\P\left[ Z^\varepsilon_{n}>\e^{1/3} n_k^{1/3}(\log(n_k))^{50k}\right]\leq e^{-c_k(\log(\e^{-1}))^2}.\]
We now prove this for $k$ replaced by $k+1$, and all $n\le n_{k+1}$.

Note that there is nothing to prove if $n\le n_k$, so we assume $n\ge n_k$. We also assume that $n/L$ is an integer for the simplicity of notation.
Recall that we call any up-right path $\gamma$ from $u$ to $v$ an {$x$-near geodesic}, if $T(\gamma)\geq T_{u,v}-x$. 
For each $i=0,1,2,\ldots, L-1$, we call $\{u \in \Z^2: 2in/L\le d(u) < 2(i+1)n/L\}$ the $i$-th block.

As indicated in Section \ref{sec:psk}, we will use induction hypothesis to show that (a proxy of) $\Gamma^*_{\boo,\bn}$ is an $x$-near geodesic for $x$ much smaller than $n_k^{1/3}$, with high probability.
Using this, we can apply Theorem \ref{thm:near-geo-loc} to show that $\Gamma^*_{\boo,\bn}$ intersects the diagonal in roughly $L^{1/3}$ blocks, and then bound $Z_n^\e$ using the induction hypothesis for each block.

We let $\cE$ be the following event.
For any $0\le i < L$ and $u\preceq v$ in the $i$-th block and also on the diagonal, there is $Z^\e_{u,v}\le  \e^{1/3} n_k^{1/3}(\log(n_k))^{50k}$.
Then by the induction hypothesis and a union bound, we have $\P[\cE]\ge 1-Ln_k^2e^{-c_k(\log(\e^{-1}))^2}$.

We now define the proxy of $\Gamma_{\boo,\bn}^\e$. We let $\gamma_*$ be the path constructed in the following way.
For each $i$, if $\Gamma_{\boo,\bn}^\e$ does not intersect the diagonal in the $i$-th block, we let $\gamma_*$ be the same as $\Gamma_{\boo,\bn}^\e$ in that block;
otherwise, let $u[i]$ and $v[i]$ be the first and last points in the intersection of $\Gamma_{\boo,\bn}^\e$ with the diagonal in the $i$-th block, and we replace the part of $\Gamma_{\boo,\bn}^\e$ between $u[i]$ and $v[i]$ by $\Gamma_{u[i],v[i]}$.
Under $\cE$ we would have
\[
T_n - T(\gamma_*) \le T^\e_{n} - T(\gamma_*) \le L \varepsilon^{1/3} n_k^{1/3}(\log(n_k))^{50k}\le \varepsilon^{1/4}n_{k}^{1/3},
\]
for sufficiently small $\e$ (depending on $k$).  Thus $\gamma_*$ is an $\varepsilon^{1/4}n_k^{1/3}$-near geodesic (assuming $\cE$). 

Recall the barriers $(\log(n))^5 B_i$ from Section \ref{s:loc} with $w=(n/L)^{2/3}$, which are segments of length $2(n/L)^{2/3}(\log(n))^5$, perpendicular to and bisected by the diagonal, and passing through $(in/L, in/L)$.
By Theorem \ref{thm:near-geo-loc}, and recall the notations there, we have $\P[\cE']\ge 1-Ce^{-c(\log(n))^3}$, for $\cE'$ being the event where $H^{\varepsilon^{1/4}n_k^{1/3} }(n,(n/L)^{2/3};(\log(n))^5) \le (\log(n))^{40}L^{1/3}$.
(Observe that as $k\ge 200$, the two conditions $n^{0.99}<n/L$ and $\varepsilon^{1/4}n_k^{1/3} \le (n/L)^{1/3}$ are satisfied.)
Assuming $\cE\cap \cE'$, we have
\begin{equation}  \label{eq:nbab}
|\{0\le i \le L-2: (\log(n_k))^5 B_i \cap \gamma_* \neq \emptyset \}| \le (\log(n_k))^{40}L^{1/3}.
\end{equation}

For each $0\le i \le L-2$, we let $\cE^{(i)}$ be the event where any $\e^{1/4}n_k^{1/3}$-near geodesic from $\L_{2in/L}\setminus (\log(n))^5 B_i$ to $\bn$ does not hit the diagonal in the $i$-th block.
By Lemma \ref{lem:cars} below we have $\P[\cE^{(i)}]\ge 1-e^{-c(\log(n_k))^3}$.
Assume $\cE\cap \cE^{(i)}$, and suppose that $\gamma_*$ does not hit the $i$-th barrier $(\log(n))^5 B_i$, then it does not hit the diagonal in the $i$-th block.
Note that $\Gamma_{\boo,\bn}^\e$ hits the diagonal in the $i$-th block if and only if $\gamma_*$ hits the diagonal in the $i$-th block.
Thus assuming $\cE\cap \cE^{(i)}$, the contribution to $Z^\e_{n}$ from the $i$-th block would be zero if $\gamma_*$ does not hit the $i$-th barrier.
More precisely, we let $X_i=\sup_{u\in (\log(n_k))^5 B_i,v\in (\log(n_k))^5 B_{i+1}} Z_{u,v}^\e$ for each $0\le i \le L-1$.
Then assuming $\cE\cap\bigcap_{i=0}^{i-2}\cE^{(i)}$ we have
\[
Z_n^\e\leq \sum_{0\le i \le L-2: (\log(n_k))^5 B_i \cap \gamma_* \neq \emptyset} X_i + X_{L-1}.
\]
Then by \eqref{eq:nbab}, under the event $\cE\cap\cE'\cap\bigcap_{i=0}^{i-2}\cE^{(i)}$ we have
\begin{multline*}
Z_n^\e \le  ((\log(n_k))^{40}L^{1/3}+1)\max_{0\le i \le L-1}X_i  \le ((\log(n_k))^{40}L^{1/3}+1)\e^{1/3} n_k^{1/3}(\log(n_k))^{50k} \\ \le \e^{1/3} n_{k+1}^{1/3}(\log(n_{k+1}))^{50(k+1)},
\end{multline*}
where the second inequality is due to that, under $\cE$, we have $X_i\le \e^{1/3} n_k^{1/3}(\log(n_k))^{50k}$ for each $i$.

It remains to lower bound $\P\left[\cE\cap \cE'\cap \bigcap_{i=0}^{L-2}\cE^{(i)}\right]$.
From $\P[\cE]\ge 1-Ln_k^2e^{-c_k(\log(\e^{-1}))^2}$, $\P[\cE']\ge 1-Ce^{-c(\log(n))^3}$, and $\P[\cE^{(i)}]\ge 1-e^{-c(\log(n_k))^3}$ for each $i$, we have $\P\left[\cE\cap \cE'\cap \bigcap_{i=0}^{L-2}\cE^{(i)}\right]\ge 1-2Ln_k^2e^{-c_k(\log(\e^{-1}))^2}$.
Since $c_{k+1}<c_k$, by choosing $\e$ sufficiently small depending on $k$, we have $1-2Ln_k^2e^{-c_k(\log(\e^{-1}))^2} \ge 1-e^{-c_{k+1}(\log(\e^{-1}))^2}$. Thus we get the desired bound for $k+1$.
\end{proof}

We need to prove the following estimate on the local fluctuation of near geodesics.
\begin{lemma}  \label{lem:cars}
For $m, n\in \N$, $2m\le n$, let $\cE_{m,n}$ be the following event: there exists some $a, b\in \Z$, with $0\le b\le m$, $|a|\ge (\log(n))^5m^{2/3}$, and $|a|\le b$, and $T_{(a,-a),(b,b)}+T_{(b,b),\bn} \ge T_{(a,-a),\bn} - m^{1/3}$.
Then there exists universal constant $c>0$, such that $\P[\cE_{m,n}]\le e^{-c(\log(n))^3}$, when $n, m$ are large enough.
\end{lemma}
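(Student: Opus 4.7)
We aim to prove Lemma~\ref{lem:cars} by a union bound over the $O(m^2)$ admissible pairs $(a,b)$; for each fixed pair we will show the per-pair probability is at most $e^{-c(\log n)^{3+\delta}}$ for some small $\delta>0$, so the polynomial factor in the union bound is absorbed. Fix such a pair; by the reflection symmetry of the field we may assume $a>0$. Introduce the auxiliary point $v_0\in\L_{2b}$ defined as the lattice point nearest to the characteristic ray from $(a,-a)$ to $\bn$, i.e.\ $v_0\approx\bigl(b+a(1-b/n),\,b-a(1-b/n)\bigr)$. Because $T_{(a,-a),\bn}\ge T_{(a,-a),v_0}+T_{v_0,\bn}-\xi(v_0)$, the event in the lemma implies
\[
A+B\ge -m^{1/3}-\xi(v_0),\qquad A:=T_{(a,-a),(b,b)}-T_{(a,-a),v_0},\ B:=T_{(b,b),\bn}-T_{v_0,\bn}.
\]
Splitting each passage time as $T=D+(T-D)$, the shape-function content of $A+B$ equals
\[
\bigl[D_{(b-a,b+a)}-D_{v_0-(a,-a)}\bigr]+\bigl[D_{(n-b,n-b)}-D_{\bn-v_0}\bigr]=-\bigl(f(b)-f(n)\bigr),
\]
where $f(x):=2\bigl(x-\sqrt{x^2-a^2}\bigr)$; this identity uses that $D$ is additive along characteristic rays so $D_{v_0-(a,-a)}+D_{\bn-v_0}=D_{\bn-(a,-a)}$. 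A short calculation gives $f(b)-f(n)\ge c\,a^2/b$ for $b\le n/2$, which under the hypotheses $|a|\ge(\log n)^5 m^{2/3}$ and $b\le m$ produces a deterministic gap of at least $c(\log n)^{10}m^{1/3}$, far exceeding $m^{1/3}$.

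The two random pieces $A-(\text{det.})$ and $B-(\text{det.})$ are each a difference of two passage times whose endpoints lie on $\L_{2b}$ separated in $x$-coordinate by $\approx|a|$. In the \emph{bulk regime} $|a|\le b/2$, both $(b,b)$ and $v_0$ lie in the bulk of $\L_{2b}$, and a strengthening of Lemma~\ref{lem:lpp-devi}---obtained by running its proof with $(\log n)^3$ in place of $(\log n)^2$ in the input of Lemma~\ref{lemma:trans-fluc-one-pt}---yields
\[
|A-(\text{det.})|+|B-(\text{det.})|\le (\log n)^{C_0}\sqrt{|a|}
\]
with probability at least $1-e^{-c(\log n)^3}$, for some universal $C_0$. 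Since $|a|\ge(\log n)^5 b^{2/3}$ forces $a^{3/2}/b\ge(\log n)^{15/2}$, choosing the bookkeeping so that $C_0<15/2$ makes the gap exceed the fluctuation by a factor of at least $(\log n)^{1/2}$, while the remaining slack $c(\log n)^{10}m^{1/3}/2$ comfortably absorbs $m^{1/3}+\xi(v_0)\le O((\log n)^2)$.

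In the \emph{skewed regime} $b/2<|a|\le b$, the comparison point $v_0$ lies near the edge of $\L_{2b}$ and the forward direction of Lemma~\ref{lem:lpp-devi} fails for $A$; but the deterministic gap grows to $f(b)-f(n)\ge cb\ge c(\log n)^5 m^{2/3}$. Here I bound $|A|$ crudely: the skewed passage time $T_{(a,-a),(b,b)}$ has fluctuation $\lesssim(\log n)^3\sqrt{b}(b-|a|+1)^{-1/6}$ by Theorem~\ref{t:onepoint}(iv) (the degenerate case $|a|=b$, where the path from $(a,-a)$ to $(b,b)$ is forced axis-aligned, handled by Gaussian concentration for a sum of $2b$ i.i.d.\ $\Exp(1)$ variables), and $T_{(a,-a),v_0}$ has fluctuation $\lesssim(\log n)^2 b^{1/3}$ by Theorem~\ref{t:onepoint}(i). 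Meanwhile the reverse-direction analogue of Lemma~\ref{lem:lpp-devi} still applies to $B$: the two endpoints $(n-b,n-b)$ and $\bn-v_0$ lie in the bulk of $\L_{2(n-b)}$ as viewed from $\boo$ after reflecting about $\bn$, their $x$-separation is $ab/n\le m/2$, giving $|B-(\text{det.})|\le(\log n)^{C_0}\sqrt{m}$. Total fluctuation is then $O((\log n)^{C_0}\sqrt{m})\ll c(\log n)^5 m^{2/3}$ since $m\ge(\log n)^{15}$ (forced by $|a|\le m$ and $|a|\ge(\log n)^5 m^{2/3}$). The main technical obstacle is the careful tracking of logarithmic exponents in the strengthened Lemma~\ref{lem:lpp-devi} to ensure $C_0<15/2$, so that the $(\log n)^{1/2}$ margin survives; as a secondary matter, the reverse-direction smoothness bound on $B$ in the skewed regime requires a symmetric, time-reversed version of Lemma~\ref{lem:lpp-devi}'s proof, which proceeds verbatim after reflecting the lattice about $\bn$.
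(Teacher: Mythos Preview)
Your approach is essentially the same as the paper's: both introduce the characteristic point $v_0=(b+a',b-a')$ with $a'=\lfloor a(1-b/n)\rfloor$, use $T_{(a,-a),\bn}\ge T_{(a,-a),v_0}+T_{v_0,\bn}-O(1)$, compute the same deterministic gap of order $a^2/b$, and bound the $B$ term via Lemma~\ref{lem:lpp-devi} with exponent $7$ against the available $(\log n)^{15/2}$ coming from $|a|\ge(\log n)^5 m^{2/3}$---your ``$C_0<15/2$'' constraint is exactly the paper's margin.

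The only real organizational difference is in the $A$ term. The paper does not split into bulk and skewed regimes; instead it decouples $A$ into two one-point events, bounding $T_{(a,-a),(b,b)}$ from above by Theorem~\ref{t:onepoint}(iv) (which covers all slopes uniformly, including $|a|$ close to $b$) and $T_{(a,-a),v_0}$ from below by Theorem~\ref{t:onepoint}(ii). This avoids your regime split entirely. Your invocation of Lemma~\ref{lem:lpp-devi} for $A$ in the bulk regime is in fact superfluous: since $|a|\ge(\log n)^5 m^{2/3}\ge(\log n)^5 b^{2/3}$, the separation between $(b,b)$ and $v_0$ already exceeds the fluctuation scale $(2b)^{2/3}$, so Lemma~\ref{lem:lpp-devi} degenerates to two one-point bounds anyway (this is the ``otherwise by Theorem~\ref{t:onepoint}'' clause in its proof). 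Both routes face the same minor bookkeeping issue that Lemma~\ref{lem:lpp-devi} as stated gives only $e^{-c(\log n)^2}$, whereas $e^{-c(\log n)^3}$ is claimed per pair; either one reruns its proof with $(\log n)^3$ in Lemma~\ref{lemma:trans-fluc-one-pt} (as you suggest), or one simply accepts $e^{-c(\log n)^2}$ per pair and notes the union bound over $\le n^2$ pairs still gives a superpolynomial conclusion sufficient for the application in Theorem~\ref{t:induc}.
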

\begin{proof}
The proof is by a union bound over all choices $a, b$, using Theorem \ref{t:onepoint} and Lemma \ref{lem:lpp-devi}.
Fix $a, b\in \Z$, with $0\le b\le m$, and $(\log(n))^5m^{2/3} \le a \le b$. The case where $a<0$ follows by symmetry.
Let $a'=\lfloor a(1-b/n) \rfloor$.
Since $T_{(a,-a),\bn} \ge T_{(a,-a),(b+a'-1,b-a')} + T_{(b+a',b-a'),\bn} $,
if $T_{(a,-a),(b,b)}+T_{(b,b),\bn} \ge T_{(a,-a),\bn} - m^{1/3}$, one of the following happens:
\begin{enumerate}
    \item $T_{(b,b),\bn} - T_{(b+a',b-a'),\bn} > D_{(n-b,n-b)}-D_{(n-b-a',n-b+a')}+(\log(n))^7\sqrt{a'}$,
    \item $T_{(a,-a),(b,b)} > D_{(b-a,b+a)} + (\log(n))^3(b+|a|)^{1/2}(b-|a|+1)^{-1/6}$,
    \item $T_{(a,-a),(b+a'-1,b-a')} < D_{(b+a'-a,b-a'+a)} - (\log(n))^3m^{1/3}$.
\end{enumerate}
Indeed, by elementary computation we have
\[
D_{(n-b,n-b)}-D_{(n-b-a',n-b+a')} + D_{(b-a,b+a)} - D_{(b+a'-a,b-a'+a)} < -c{a'}^2/m,
\]
where $c>0$ is a small constant.
Then we have
\begin{multline*}
(D_{(n-b,n-b)}-D_{(n-b-a',n-b+a')}+(\log(n))^7\sqrt{a'}) + (D_{(b-a,b+a)} + (\log(n))^3(b+|a|)^{1/2}(b-|a|+1)^{-1/6})
\\
- (D_{(b+a'-a,b-a'+a)} - (\log(n))^3m^{1/3}) \\
< -c{a'}^2/m + (\log(n))^7\sqrt{a'} + (\log(n))^3(b+|a|)^{1/2}(b-|a|+1)^{-1/6} + (\log(n))^3m^{1/3} < -m^{1/3},
\end{multline*}
where the last inequality uses that $|a|\ge (\log(n))^5m^{2/3}$. 
Thus if none of the three events holds, we must have
\[
T_{(b,b),\bn} - T_{(b+a',b-a'),\bn} + T_{(a,-a),(b,b)} - T_{(a,-a),(b+a'-1,b-a')} < -m^{1/3},
\]
which contradicts with $T_{(a,-a),(b,b)}+T_{(b,b),\bn} \ge T_{(a,-a),\bn} - m^{1/3}$.

We finally bound the probabilities of the three events.
We apply Lemma \ref{lem:lpp-devi} for the first event, Theorem \ref{t:onepoint}(iv) for the second event, and Theorem \ref{t:onepoint}(ii) for the third event.
For these, we note that $a'\le b(1-b/n) \le (n-b)/2$, and $|a-a'|+1 \le ab/n+2 \le b/2+2$, so the slopes $\frac{n-b-a'}{n-b+a'}$ and $\frac{b+a'-a+1}{b-a'+a}$ are in $[1/3, 3]$ when $a, b$ are large enough.
Then the probability of each of the three events is bounded by $e^{-c(\log(n))^3}$ for some constant $c>0$, when $n, m$ are large enough.
Thus we conclude that $\P[T_{(a,-a),(b,b)}+T_{(b,b),\bn} \ge T_{(a,-a),\bn} - m^{1/3}] < 3e^{-c(\log(n))^3}$.
Note that there are no more than $n^2$ choices of $a, b$, so by taking a union bound over all $a, b$ the conclusion follows.
\end{proof}
 
It is easy to get the following improvement to Theorem \ref{t:induc}. 

\begin{corollary}\label{c:induc}
For all $k\in \N$, $k\ge 200$, there exists some $\e_k>0$ such that for all $0<\e<\e_k$, with $n_k=\lfloor\e^{-k/200}\rfloor$,
\[\P\left[\sup_{u\in \L_0, v\in \L_{2n_k}} Z^\varepsilon_{u,v}>\varepsilon n_k^{1/3}(\log(n_k))^{50k}\right]\leq e^{-c(\log(k))^{-1}(\log(\e^{-1}))^2}\,,\]
for some universal constant $c>0$.
 \end{corollary}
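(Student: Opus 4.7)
My plan is to mirror the multi-scale inductive argument of Theorem \ref{t:induc}, but to induct on the stronger supremum quantity with the sharper bound $\varepsilon n_k^{1/3} \log^{50k}(n_k)$ in place of $\varepsilon^{1/3} n_k^{1/3} \log^{50k}(n_k)$. The inductive step goes through essentially verbatim, since the key algebra---multiplying a per-block bound by the number of barrier hits $F = (\log n_k)^{40}L^{1/3}+1$ and checking that $F \cdot \varepsilon n_k^{1/3} \log^{50k}(n_k) \leq \varepsilon n_{k+1}^{1/3}\log^{50(k+1)}(n_{k+1})$---is formally identical with $\varepsilon$ instead of $\varepsilon^{1/3}$. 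The real content is therefore at the base case $k=200$, where $n_k=\varepsilon^{-1}$ and we need $\sup_{u \in \L_0,\,v \in \L_{2n_k}} Z^\varepsilon_{u,v} \leq \varepsilon^{2/3}\log^{10000}(\varepsilon^{-1})$ with the prescribed probability; for $\varepsilon$ exponentially small this target is itself much less than $1$, so the crude bound $\sum_{i=0}^{n_k}\varrho_i\xi'_i\sim 1$ used in the theorem's base case is no longer sharp enough.

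For the base case I would start from the deterministic inequality
\[
Z^\varepsilon_{u,v}\;\leq\; X(\Gamma^\varepsilon_{u,v})\;:=\;\sum_{w\in\Gamma^\varepsilon_{u,v}\cap\{x=y\}}\varrho(w)\xi'(w),
\]
which follows from $T_{u,v}\geq T(\Gamma^\varepsilon_{u,v})$ and hence $Z^\varepsilon_{u,v}=T^\varepsilon(\Gamma^\varepsilon_{u,v})-T_{u,v}\leq T^\varepsilon(\Gamma^\varepsilon_{u,v})-T(\Gamma^\varepsilon_{u,v})=X(\Gamma^\varepsilon_{u,v})$. The key input is a uniform bound $\#(\Gamma^\varepsilon_{u,v}\cap\text{diag})\leq C n_k^{1/3}\log^C n_k$ with high probability over all relevant $(u,v)$. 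To obtain it I would combine (i) a crude Chernoff bound $\sum_{i=0}^{n_k}\varrho_i\xi'_i=O(1)$ whp to deduce that $\Gamma^\varepsilon_{u,v}$ is an $O(1)$-near geodesic in the original weights (since $T_{u,v}-T(\Gamma^\varepsilon_{u,v})=X(\Gamma^\varepsilon_{u,v})-Z^\varepsilon_{u,v}\leq X(\Gamma^\varepsilon_{u,v})$); (ii) the remark after Theorem~\ref{thm:near-loc-time} giving an $O(n^{1/3})$ diagonal-intersection bound with exponential tails for the geodesic itself; and (iii) the near-geodesic local-time estimate of Theorem~\ref{thm:near-geo-loc} at an intermediate barrier scale, to upgrade the geodesic statement to the small-deviation near geodesic $\Gamma^\varepsilon_{u,v}$. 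A second Chernoff bound for sums of $N=O(n_k^{1/3}\log^C n_k)$ i.i.d.\ $\mathrm{Ber}(\varepsilon)\cdot\mathrm{Exp}(1-\varepsilon)$ variables then yields $X(\Gamma^\varepsilon_{u,v})\leq C\varepsilon N+O(\log n_k)\leq \varepsilon^{2/3}\log^{10000}(\varepsilon^{-1})$ with the required probability, uniformly over $(u,v)\in\L_0\times\L_{2n_k}$.

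The inductive step from $k$ to $k+1$ then follows the proof of Theorem~\ref{t:induc} word-for-word, using the strengthened inductive hypothesis (sup over pairs in each block, with factor $\varepsilon$) to bound each $X_i$, combined with $F$ from Theorem~\ref{thm:near-geo-loc} and the barrier-avoidance Lemma~\ref{lem:cars}. The main obstacle I anticipate is step (iii) in the base case: Theorem~\ref{thm:near-loc-time} requires $x\leq w^{1/2}$ and $n^{0.99}<w^{3/2}$, so one cannot directly take barriers of width $O(1)$ to count individual diagonal points of a near geodesic. Handling this will require running Theorem~\ref{thm:near-geo-loc} at a coarse scale to control macroscopic barrier hits and then, within each hit barrier, exploiting that $\Gamma^\varepsilon_{u,v}$ is forced by the tiny deviation to stay essentially on top of the unperturbed geodesic (for which the remark gives the desired $n^{1/3}$ count) --- a transversal comparison that I would implement via Proposition~\ref{prop:trans-fluc} restricted to the off-diagonal portions of the path.
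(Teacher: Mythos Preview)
The paper's own proof is a two-line reduction. For any $u\in\L_0$ and $v\in\L_{2n_k}$, either $\Gamma^\varepsilon_{u,v}$ misses the diagonal (then $Z^\varepsilon_{u,v}=0$), or it has a first and last diagonal point $a=(a_0,a_0)$, $b=(b_0,b_0)$ with $0\le a_0\le b_0\le n_k$. Replacing the portion of $\Gamma^\varepsilon_{u,v}$ between $a$ and $b$ by $\Gamma_{a,b}$ yields a path from $u$ to $v$ in the original environment, so $T_{u,v}\ge T^\varepsilon_{u,v}-Z^\varepsilon_{a,b}$, i.e.\ $Z^\varepsilon_{u,v}\le Z^\varepsilon_{a,b}$. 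A union bound over the at most $(n_k+1)^2$ diagonal pairs together with Theorem~\ref{t:induc} then gives the corollary --- with threshold $\varepsilon^{1/3}n_k^{1/3}\log^{50k}(n_k)$, since the polynomial factor $(n_k+1)^2$ is absorbed by shrinking $c$ for $\varepsilon$ small depending on $k$.

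Your entire plan is driven by reading the printed threshold as $\varepsilon\,n_k^{1/3}\log^{50k}(n_k)$ rather than $\varepsilon^{1/3}n_k^{1/3}\log^{50k}(n_k)$. That $\varepsilon$ is almost certainly a typographical slip: it cannot be obtained from the stated one-line proof, and the only downstream use of the corollary (inside Theorem~\ref{t:exp}) needs merely $\P(\sup Z^\varepsilon_{u,v}\ge n_k^{1/2})\le n_k^{-1}$, which the $\varepsilon^{1/3}$ version already delivers. You therefore set out to re-run the full multi-scale induction for a genuinely stronger inequality, and correctly identify the real obstacle this creates: at $k=200$ you would need $\sup Z^\varepsilon_{u,v}\le \varepsilon^{2/3}\log^{10000}(\varepsilon^{-1})\ll 1$, which requires controlling the number of diagonal points on an $O(1)$-near geodesic at barrier width $w=O(1)$, outside the regime $n^{0.99}<w^{3/2}$ of Theorem~\ref{thm:near-loc-time}. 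The workaround you sketch (coarse barriers plus Proposition~\ref{prop:trans-fluc}) is not completed and would need substantial additional argument. With the intended $\varepsilon^{1/3}$, none of this machinery is required: the corollary is immediate from Theorem~\ref{t:induc} by the union bound described above.
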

 
\begin{proof}This follows from Theorem \ref{t:induc} together with a union bound on the first and last times the path hits the diagonal.
\end{proof}
We next deduce our main result Theorem \ref{t:main} using the above results.
\begin{lemma}\label{lem:exp} Given any $k\in \N$, $k\ge 200$, there exists $\varepsilon_k>0$ such that for all $\varepsilon<\varepsilon_k$, and all $\l\in \N$, with $n_{k,\l}=\l\lfloor \e^{-k/200} \rfloor$, we have
\[\E[Z_{n_{k,\l}}^\varepsilon]\leq \varepsilon^{k/600}n_{k,\l}\,.\]
\end{lemma}

\begin{proof}
For the simplicity of notation, we again assume that $\e^{-1/200}$ is an integer. We let $c>0$ denote a small universal constant, and its value can change from line to line.

We first prove this theorem for $\ell = 1$. Note that
\[\E[Z^\e_{n_k}\don[Z^\e_{n_k}>n_k]]=n_k\P[Z^\e_{n_k}>n_k] + \int_{0}^\infty \P[Z^\e_{n_k}>n_k+t]\rd t\le e^{-cn_k},\]
since $Z_{n_k}^\e$ is at most the sum of $n_k$ independent random variables, each distributed as $\rho\xi$ with $\rho$ a Bernoulli($\e$) random variable and $\xi$ an independent Exp($1-\e$) random variable. And
\[
\E[Z_{n_k}^\e \don[Z_{n_k}^\e\le n_k^{1/2}]] \le n_k^{1/2}, \quad
\E[Z_{n_k}^\e \don[n_k^{1/2}\le Z_{n_k}^\e\le n_k]] \le n_k \P[Z_{n_k}^\e\ge n_k^{1/2}].
\]
Thus, we get
\[\E[Z_{n_k}^\e]\le n_k^{1/2}+n_k\P[Z_{n_k}^\e\ge n_k^{1/2}] + e^{-cn_k}.\]
Using Theorem \ref{t:induc}, we get
\[\P[Z_{n_k}^\e\ge n_k^{1/2}]\le e^{-c(\log(k))^{-1}(\log(\e^{-1}))^2}\le n_k^{-1}\]
for all $\e$ small enough depending on $k$.
Hence
\[\E[Z^\e_{n_k}]\le n_k^{1/2}+1+e^{-cn_k}\le \e^{k/600}n_k\,.\]
We then consider general $\ell$. Let $Z[i]=\sup_{u\in \L_{2in_k},v\in \L_{2(i+1)n_k}} Z^\varepsilon_{u,v}$ for each $i\in \Z_{\ge 0}$. Then we have
$Z^\e_{n_{k,\l}}\le \sum_{i=0}^{\l-1} Z[i]$.
For each $i\in \Z_{\ge 0}$, using the same arguments as above, and using Corollary \ref{c:induc} instead of Theorem \ref{t:induc}, we get $\E[Z[i]]\le\e^{k/600} n_k$.
Thus the conclusion follows.
\end{proof}

Now we are ready to prove Theorem \ref{t:main}.

\begin{proof}[Proof of Theorem \ref{t:main}]Fix any $k\in \N$, $k\ge 200$. From Lemma \ref{lem:exp}, taking limit as $\l\to \infty$, and using 
 $\lim_{n\to\infty} \E[n^{-1}T_n]=4$, we have \[\limsup_{\l\to \infty}\E[n_{k,\l}^{-1}T_{n_{k,\l}}^\varepsilon]-4\leq \varepsilon^{k/600}\,,\]
 for all $\e<\e_k$ for some $\e_k>0$.
Since by Subadditive Ergodic Theorem, $\lim_{n\to\infty} \E[n^{-1}T_n^\e]$ exists, we get from above
\[\Xi(\e)=\lim_{n\to \infty}\frac{1}{n}\E[T_n^\e]-4\le \e^{k/600}\,.\]
This proves the theorem, since $k$ is arbitrarily taken.
\end{proof}

\section{Tail of multiple near-optimums}   \label{s:multi}
In this section we prove Proposition \ref{prop:multiple-peak}.
We use a Gibbs property of the point-to-line profile in Exponential LPP, which we state now.
\begin{definition}
For each $k\in\N$ and $u\preceq v$, let $L^k_{u,v}=\max_{\gamma_1,\ldots, \gamma_k} \sum_{i=1}^k T(\gamma_i)$, where the maximum is over all (possibly empty) up-right paths $\gamma_1,\ldots, \gamma_k$, such that they are mutually disjoint, and each is contained in the set $\{w\in\Z^2: u\preceq w \preceq v\}$.
Denote $W^k_{u,v}=L^k_{u,v}-L^{k-1}_{u,v}$ (where we take $L^0_{u,v}=0$).
For any $\boo\preceq v$ we also write $W^k_v=W^k_{\boo,v}$.
\end{definition}
We list some properties of these maximum disjoint weights.
\begin{enumerate}
    \item $T_{u,v}=W^1_{u,v}$.
    \item $W^k_{u,v}\le W^{k}_{u,v+(1,0)}$ and $W^{k+1}_{u,v+(1,0)}\le W^k_{u,v}$; similarly $W^k_{u,v}\le W^{k}_{u,v+(0,1)}$ and $W^{k+1}_{u,v+(0,1)}\le W^k_{u,v}$.
    \item For $u=(u_1,u_2)\preceq v=(v_1,v_2)$ and $k>1+(v_1-u_1)\wedge (v_2-u_2)$, we must have $W^k_{u,v}=0$.
\end{enumerate}
For each $n\in\N$ we let $I_n:=\{(a,b,k):a,b\in\Z_{\ge 0}, a+b=n, k\in\llbracket 1, 1+a\wedge b\rrbracket \}$.
We now give the explicit distribution of $\{W_{(a,b)}^k\}_{(a,b,k)\in I_n\cup I_{n+1}}$.
\begin{theorem}  \label{thm:gibbs-prob}
For any non-negative $\{p_{(a,b)}^k\}_{(a,b,k)\in I_n \cup I_{n+1}}$, if it satisfies the following \emph{interlacing condition}: $p_{(a,b)}^k\le p_{(a+1,b)}^k, p_{(a,b+1)}^k$ and $p_{(a,b)}^k\ge p_{(a+1,b)}^{k+1}, p_{(a,b+1)}^{k+1}$ for any $(a,b,k)\in I_n$, 
we have
\begin{multline*}
\P\left[ W_{(a,b)}^k \in [p_{(a,b)}^k, p_{(a,b)}^k+\rd p_{(a,b)}^k), \; \forall (a,b,k)\in I_n\cup I_{n+1} \right] \\= \frac{1}{Z}\exp\left(-\sum_{(a,b,k)\in I_{n+1}} p_{(a,b)}^k + \sum_{(a,b,k)\in I_{n}} p_{(a,b)}^k\right) \prod_{(a,b,k)\in I_n\cup I_{n+1}} \rd p^k_{(a,b)},
\end{multline*}
where $Z$ is the partition function.
If the interlacing condition is not satisfied, the left hand side equals zero.
\end{theorem}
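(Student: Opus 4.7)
The plan is to prove this by a change of variables through the RSK correspondence. Let $\Psi_n$ be the map sending the collection of weights $(\xi(w))_{d(w)\le n+1}$ to the collection $(W^k_{(a,b)})_{(a,b,k)\in I_n\cup I_{n+1}}$. The strategy is to show that $\Psi_n$ is a piecewise-linear bijection onto the polytope defined by the interlacing and non-negativity conditions, with Jacobian $\pm 1$ in each region of linearity; then the joint density of the $W$'s is simply the $\Exp(1)$ density $\exp(-\sum_w\xi(w))$ rewritten in the $W$-variables, which by a combinatorial identity equals $\exp(-\sum_{I_{n+1}}p+\sum_{I_n}p)$.

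First, I would verify the identity
\[
\sum_{(a,b,k)\in I_{n+1}}W^k_{(a,b)} - \sum_{(a,b,k)\in I_{n}}W^k_{(a,b)} = \sum_{w:\,d(w)\le n+1}\xi(w).
\]
The key observation is that, using property (iii) together with the fact that the rectangle $[\boo,(a,b)]$ can be partitioned into exactly $1+\min(a,b)$ disjoint up-right paths, one has $\sum_{k=1}^{1+\min(a,b)}W^k_{(a,b)} = L^{1+\min(a,b)}_{\boo,(a,b)} = \sum_{w\preceq(a,b)}\xi(w)$. Interchanging the order of summation shows that each $\xi(w)$ with $d(w)\le m$ contributes to $\sum_{I_m}W$ with multiplicity $m-d(w)+1$, and the difference telescopes to the right-hand side above.

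Next, I would establish the bijection. A direct count gives $|I_n|+|I_{n+1}|=\binom{n+3}{2}$, which matches the number of $\xi$'s in the triangle $\{d(w)\le n+1\}$. Building the triangle anti-diagonal by anti-diagonal, each newly added $\xi$ triggers a single local RSK update on the growing Gelfand--Tsetlin pattern; such an update is a piecewise-linear tropical map whose Jacobian on each sector of linearity is $\pm 1$. Composing these updates, $\Psi_n$ is piecewise-linear and volume-preserving, and its inverse (via successive RSK unbumpings) maps any interlacing pattern on $I_n\cup I_{n+1}$ back to a valid configuration of $\xi$'s, simultaneously recovering the interior $W$'s at anti-diagonals $a+b<n$. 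The additional relation $W^k_{(a,b)}\ge W^{k+1}_{(a,b)}$ within a single point is automatic from the stated interlacing by chaining, e.g.\ $W^k_{(a,b)}\ge W^{k+1}_{(a+1,b)}\ge W^{k+1}_{(a,b)}$. The change-of-variables formula then yields the claimed density on the polytope, with $Z=1$; writing it as a general partition function is harmless.

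The main obstacle is the Jacobian step: verifying rigorously that the composition of local RSK updates is volume-preserving on every piece of its piecewise linearity. This is classical in the geometric/tropical RSK literature (cf.\ work of O'Connell and of Noumi--Yamada), and a self-contained proof would go by induction on the size of the triangle, checking the invariance for a single-vertex RSK insertion in each sector of the max-plus algebra. The remaining pieces, namely the combinatorial identity and the change-of-variables step, are routine once the bijection/Jacobian facts are in place.
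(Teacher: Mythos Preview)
Your approach is correct and closely parallels the paper's, but the implementation differs in one instructive way. Both arguments rest on the same two pillars: the telescoping identity $\sum_{I_{n+1}}W-\sum_{I_n}W=\sum_{d(w)\le n+1}\xi(w)$ (your derivation via $L^{1+\min(a,b)}_{\boo,(a,b)}=\sum_{w\preceq(a,b)}\xi(w)$ is exactly what is needed), and an RSK-type bijection built anti-diagonal by anti-diagonal. Where you work directly in the continuous setting and invoke the fact that tropical RSK is piecewise-linear and volume-preserving, the paper instead first proves the statement for i.i.d.\ \emph{geometric} weights, where the map $\{\hat\xi\}\mapsto\{\hat W\}$ is a genuine bijection between finite sets of integer arrays (so no Jacobian is needed at all), and then recovers the exponential case by rescaling and sending the geometric parameter $\beta\to 0$.

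The trade-off is this: your route is more direct and yields $Z=1$ immediately, but it relies on the Jacobian-$\pm 1$ property of tropical RSK, which---as you acknowledge---you have not verified and is not entirely trivial to check sector by sector. The paper's discrete-then-limit route sidesteps that analytic point completely at the cost of an extra (but routine) limiting argument. Structurally, the paper's inductive bijection $F_i:\sT_i\times\sI[B_i]\to\sT_{i+1}\times\sI[B_{i+1}]$, built from Schensted insertion on one anti-diagonal at a time, is exactly the integer-lattice precursor of the ``local RSK update'' you describe; your proposal is its tropicalization. If you want to make your version self-contained, the cleanest path is probably to mimic the paper: prove the geometric analogue first (a pure bijection, no calculus), then pass to the limit.
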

We let $S:\Z_{\ge 0}\to \R$ be a random walk, where each $S(i)-S(i-1) \sim \LAP(0,2)$, i.e., the difference of two independent $\Exp(1/2)$ random variables.
From Theorem \ref{thm:gibbs-prob} we immediately get the following Gibbs property of the profile $a\mapsto T_{\boo,(a,n-a)}$.
\begin{lemma}  \label{lem:T-Gibbs}
Take any $l<r\in \llbracket 0, n\rrbracket$. Conditioned on $\{W^2_{(a,n+1-a)}\}_{l\le a \le r+1}$ and $T_{\boo,(l,n-l)}, T_{\boo,(r,n-r)}$, the process $\{T_{\boo,(l+i,n-l-i)}-T_{\boo,(l,n-l)}\}_{i=0}^{r-l}$ has the same distribution as the random walk $\{S(i)\}_{i=0}^{r-l}$, conditioned on that $S(r-l)=T_{\boo,(r,n-r)}-T_{\boo,(l,n-l)}$, and $S(i)> W^2_{(l+i,n+1-l-i)}\vee W^2_{(l+i+1,n-l-i)} - T_{\boo,(l,n-l)}$ for each $0\le i \le l-r$.
\end{lemma}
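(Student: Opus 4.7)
The plan is to derive this Gibbs property directly from the explicit joint density in Theorem \ref{thm:gibbs-prob}. Writing $T_a := W^1_{(a,n-a)}$, $T'_a := W^1_{(a,n+1-a)}$, and $U_a := W^2_{(a,n+1-a)}$, the interlacing relations coming from $(a,n-a,1)\in I_n$ read $U_a\vee U_{a+1} \le T_a \le T'_a\wedge T'_{a+1}$, while for $(a,b,k)\in I_n$ with $k\ge 2$ the interlacing couples only the $W^k$ and $W^{k+1}$ variables on $\L_n$ and $\L_{n+1}$, and in particular does not involve the $T_a$'s. The Gibbs claim will follow by computing the conditional density of $\{T_a\}_{l<a<r}$ starting from the full joint density, then recognizing the resulting formula as that of a $\LAP(0,2)$ walk bridge under a lower barrier.

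Starting from the joint density $\propto \exp\bigl(-\sum_{I_{n+1}} p + \sum_{I_n} p\bigr)\don_{\{\text{interlacing}\}}$ from Theorem \ref{thm:gibbs-prob}, I would marginalize out every variable except $\{T_a\}_{l\le a\le r}$ and $\{U_a\}_{l\le a\le r+1}$. Each $T'_a$ appears with weight $-T'_a$ and is supported on $[\max(T_{a-1},T_a),\infty)$ (with the convention $T_{-1}=T_{n+1}=-\infty$), so the integral over $T'_a$ contributes a factor $\exp(-\max(T_{a-1},T_a))$. The higher-level $W^k$ ($k\ge 2$) variables decouple from $\{T_a\}$ once the $T'$-constraints are resolved, since their interlacing relations involve only the $U_a$'s and other higher $W^k$'s; integrating them out produces a factor depending only on the conditioned $U_a$'s. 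Similarly the $T_a$'s outside $[l,r]$ interact with the interior only through $T_l$ and $T_r$, which are conditioned, so they contribute only a constant to the conditional law.

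Combining these, the conditional density of $\{T_a\}_{l<a<r}$ given $T_l, T_r$ and $\{U_a\}_{l\le a\le r+1}$ becomes proportional to
\[
\exp\Bigl(\sum_{a=l+1}^{r-1} T_a - \sum_{a=l+1}^{r}\max(T_{a-1},T_a)\Bigr)\cdot \don_{\{T_a\ge U_a\vee U_{a+1},\ l\le a\le r\}}.
\]
Using the identity $\max(x,y)=(x+y+|x-y|)/2$ the exponent collapses to $-\tfrac12(T_l+T_r)-\tfrac12\sum_{a=l+1}^{r}|T_a-T_{a-1}|$; since $T_l,T_r$ are fixed by the conditioning, this is exactly the density of a $\LAP(0,2)$-increment random walk bridge from $T_l$ to $T_r$, conditioned to lie above $U_a\vee U_{a+1}$ at each index $a\in\{l,\dots,r\}$, which is the claimed statement. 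The one delicate step is confirming that all $W^k$ with $k\ge 2$ truly factor off; this hinges on the one-sided ``staircase'' structure of the interlacing in Theorem \ref{thm:gibbs-prob}, which ensures that once the $T'_a$'s have been integrated out, the $T_a$'s couple to no higher-level variable except through the conditioned $U_a$'s.
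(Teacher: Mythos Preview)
Your proposal is correct and is precisely the computation the paper has in mind: the paper does not give a separate proof of this lemma, stating only that it follows ``immediately'' from Theorem~\ref{thm:gibbs-prob}, and your argument supplies exactly those details. The key structural observations you make---that the only interlacing constraints touching $T_a=W^1_{(a,n-a)}$ are $U_a\vee U_{a+1}\le T_a\le T'_a\wedge T'_{a+1}$, that each $T'_a$ has no upper constraint and integrates to $e^{-\max(T_{a-1},T_a)}$, and that all $W^k$ with $k\ge 2$ couple to the top line only through the conditioned $U_a$'s---are correct, and the telescoping via $\max(x,y)=\tfrac12(x+y+|x-y|)$ then yields the $\LAP(0,2)$-bridge density as claimed.
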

This result can be viewed as \cite[Corollary 4.8]{O03} or \cite[Theorem 5.2]{DNV}, in slightly different settings.
For completeness we give the proof of Theorem \ref{thm:gibbs-prob} in Appendix \ref{sec:appa}.

\subsection{Random walk bridge estimates}

As seen above, the point-to-line profile can be described as a random walk bridge, conditioned on staying above a certain function.
To prove Proposition \ref{prop:multiple-peak}, in this subsection we study such random walk bridges, on the probability for the sum of two having multiple peaks, and each staying above a `well-behaved' function.

Fix arbitrary $C_0>0$, and in this subsection the constants $c,C$ would depend on $C_0$, and the values can change from line to line. We define random walk bridges $\uS, \oS$ as following.
Take any $g>0$ and $n,m\in\N$, and $\us, \os \in \R$, satisfying that
\begin{enumerate}
    \item $|\us|,|\os|<C_0n$ and $|\us+\os|<\sqrt{g}$.
    \item  $n^{0.9}<g$, $gm<n$, and $\log(n)^2<m<n^{0.1}$.
\end{enumerate}

Let $\uS, \oS :\llbracket 0, n\rrbracket \to \R$ be two independent copies of $S$ on $\llbracket 0, n\rrbracket$, conditioned on that $\uS(n)=\us$ and $\oS(n)=\os$.

Such random walk bridges could also be defined in the following alternative way.
Denote $\alpha= \os/n$, then we have $|\alpha| < C_0$. Take $\beta \in \R$ such that $\frac{8\beta}{1-4\beta^2}=\alpha$.
Let $X$ be a random variable with probability density given by $\P[X\in [x,x+\rd x)]=\frac{1-4\beta^2}{4}e^{-|x-\alpha|/2-\beta x}\rd x$; then we have $\E[X] =0$.
We consider a random walk $\oS':\Z_{\ge 0}\to \R$, where each $\oS'(i)-\oS'(i-1)$ has the same distribution as $X$; and we let $\uS'$ be an independent copy of $-\oS'$.
Then $\{\oS(i)-\alpha i\}_{i=0}^n$ is $\oS'$ on $\llbracket 0, n\rrbracket$ conditioned on that $\oS'(n)=0$;
and $\{\uS(i)+\alpha i\}_{i=0}^n$ is $\uS'$ on $\llbracket 0, n\rrbracket$ conditioned on that $\uS'(n)=\os+\us$.
We denote $\ocT^i(x)=\P[\oS'(i)\in[x,x+\rd x)]/\rd x$ and $\ucT^i(x)=\P[\uS'(i)\in[x,x+\rd x)]/\rd x$.

We would need the following bound on the probability of exhibiting multiple peaks for the sum $\uS+\oS$.
\begin{lemma}  \label{lem:S-multi-peak}
Let $\cE$ be the event where $\uS(i)+\oS(i)<\sqrt{g}$ for each $0\le i \le n$; and there exist integers $0<i_1<\ldots<i_m<n$, such that $i_{j+1}-i_j>g$ and $\oS(i_j)+\uS(i_j)>-\sqrt{g}$ for each $j$.
Then we have $\P[\cE]<Ce^{-cm}$.
\end{lemma}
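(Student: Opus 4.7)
The strategy is to iterate a strong Markov bound on successive returns of the sum $Z(i) := \uS(i) + \oS(i)$ to the band $I := [-\sqrt{g}, \sqrt{g})$, exploiting that the constraint $Z(i) < \sqrt{g}$ throughout pushes $Z$ far below $I$ and makes $g$-separated returns geometrically rare. Define $\tau_0 := 0$ and inductively
\[
\tau_j := \min\{i \geq \tau_{j-1} + g : Z(i) \in I\},
\]
with $\tau_j := \infty$ if no such $i \le n$ exists. On $\cE$ we necessarily have $\tau_m \le n$ and $Z(\tau_j) \in I$ for every $j$ (the latter because $Z$ stays below $\sqrt{g}$). The goal reduces to showing $\P[\tau_m \le n,\ \max_{i \le n} Z(i) < \sqrt{g}] \le C e^{-cm}$.

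The workhorse is a comparison with the $3$-dimensional Bessel process. Since the increments of $\uS'$ and $\oS'$ are i.i.d., centered, and have finite exponential moments (they have the explicit tilted Laplace density from the setup), a KMT coupling embeds each bridge within $O(\log n)$ of a Brownian bridge of matching variance, which is negligible at scale $\sqrt{g}$ because $g > n^{0.9}$. Under this coupling the event $\{Z(i) < \sqrt{g}\ \forall i\}$ corresponds to a Brownian bridge of variance proportional to $t(n-t)/n$ conditioned to stay below $\sqrt{g}$, and the reflected process $\sqrt{g} - Z$ becomes a positive bridge comparable to a Bessel-$3$ process started from $\sqrt{g}$. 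The key probabilistic fact is that Bessel-$3$ is transient upward: by its harmonic function $\P[\text{hits } y \mid \text{start at } x] = y/x$, if $M_j$ denotes the running maximum of the reflected process at the $j$-th return to $[0,2\sqrt{g}]$, then $\P[(j{+}1)\text{-th return}\mid M_j] \le 2\sqrt{g}/M_j$. Since $M_{j+1} \ge 2 M_j$ with positive probability between consecutive returns, the number of returns is stochastically dominated by a Geometric random variable with parameter bounded away from zero, giving a uniform one-step factor $\rho < 1$.

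Iterating the one-step bound via the strong Markov property of the pair $(\uS,\oS)$ at each $\tau_j$, and using $gm < n$ to guarantee $n-\tau_j \ge 2g$ for $j \le m-2$, yields $\P[\cE] \le C \rho^{m-2}$, which is the claimed $Ce^{-cm}$. The main obstacle is the endpoint forcing: the bridge condition $Z(n) = \us+\os \in I$ pulls the walk back to the band near $n$, invalidating the Bessel escape picture in the last $O(g)$ steps and spoiling uniformity of $\rho$ in that regime. This is handled by splitting $[0,n]$ at time $n/2$ and applying the Bessel-$3$ argument forward on the first half and (by time-reversal, which preserves the bridge law up to replacing $\alpha,\beta$ by $-\alpha,-\beta$) backward on the second half, with a direct LCLT density estimate bounding the contribution of the middle region. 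A careful bookkeeping of the KMT coupling error at scale $\sqrt{g}$ and of the bridge Radon–Nikodym derivative between the two halves absorbs the polynomial prefactors into the constants $C,c$.
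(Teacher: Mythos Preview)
The overall architecture of your proposal --- split at $n/2$, trade bridges for free walks via a local CLT density comparison, pass to Brownian motion, and then iterate a one-step escape bound at each of the $g$-separated visits to the band --- mirrors the paper's proof. The paper differs in that it never conditions on the barrier $\{Z<\sqrt g\}$ and never invokes Bessel-$3$: it bounds the \emph{joint} event (barrier $\cap$ multi-peak) directly for the unconditioned walk. After Skorokhod-embedding $\hS=\uS'+\oS'$ into a Brownian motion $B$, the barrier forces $B<2\sqrt g$ throughout (up to two controllable embedding-error events), while the $m/2$ visits force $B$ into $[-\sqrt g,\sqrt g]$ at $m/2$ stopping times separated by at least $cg$; since at each such time $B$ has a uniformly positive chance of exceeding $2\sqrt g$ within the next $cg$ units, the joint event has probability at most $(1-p)^{m/2}$. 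This is the un-$h$-transformed version of your Bessel picture, and it sidesteps any need to justify a Bessel-$3$ comparison for a conditioned bridge.

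Your Bessel-$3$ step, as written, has a genuine gap. You assert $\P[(j{+}1)\text{-th return}\mid M_j]\le 2\sqrt g/M_j$ with $M_j$ the running maximum of the reflected process at the $j$-th return, citing the Bessel harmonic function. But the bound $\P_x[\text{hit }y]=y/x$ applies when the process is \emph{currently at} $x>y$; at time $\tau_j$ the reflected process sits in $[0,2\sqrt g]$, not at $M_j$, so this inequality does not follow and the claimed geometric domination of the number of returns is unjustified. The repair is easy --- from any point of $[0,2\sqrt g]$, Bessel-$3$ reaches $4\sqrt g$ within time $g$ with probability bounded below by some $p>0$, and from there never returns to $[0,2\sqrt g]$ with probability $\ge 1/2$, yielding a genuine one-step factor $\rho=1-p/2$ --- but once written this way it is exactly the paper's direct Brownian escape estimate viewed through the $h$-transform, and the conditioning on $\{Z<\sqrt g\}$, the KMT coupling, and the Bessel comparison are unnecessary detours.
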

\begin{proof}
We let $\cE_1$ be the event where $\uS(i)+\oS(i)<\sqrt{g}$ for each $0\le i \le \lfloor n/2\rfloor$, and there exist integers $0<i_1<\ldots<i_{\lfloor m/2\rfloor}\le \lfloor n/2\rfloor$, such that $i_{j+1}-i_j>g$ and $\oS(i_j)+\uS(i_j)>-\sqrt{g}$ for each $j$;
also let $\cE_2$ be the event where $\uS(i)+\oS(i)<\sqrt{g}$ for each $\lfloor n/2\rfloor\le i \le n$; and there exist integers $\lfloor n/2\rfloor \le i_1<\ldots<i_{\lfloor m/2\rfloor}<n$, such that $i_{j+1}-i_j>g$ and $\oS(i_j)+\uS(i_j)>-\sqrt{g}$ for each $j$.
Then we have that $\cE\subset \cE_1 \cup \cE_2$. We shall now bound $\P[\cE_1]$, and the bound for $\P[\cE_2]$ would follow similarly.

We consider the event $\cE'$, where $\uS'(i)+\oS'(i)<\sqrt{g}$ for each $0\le i \le \lfloor n/2\rfloor$, and there exist integers $0<i_1<\ldots<i_{\lfloor m/2\rfloor}\le \lfloor n/2\rfloor$, such that $i_{j+1}-i_j>g$ and $\oS'(i_j)+\uS'(i_j)>-\sqrt{g}$ for each $j$.
Denote \[\cT'(x,y)=\P[\cE',\oS'(\lfloor n/2\rfloor)\in[x, x+\rd x), \uS'(\lfloor n/2\rfloor)\in[y, y+\rd y)]/(\rd x\rd y).\]
We can then write
\[
\P[\cE_1]=\frac{\iint \cT'(x,y)\ocT^{\lceil n/2\rceil}(-x)\ucT^{\lceil n/2\rceil}(\os+\us-y) \rd x\rd y}{\ocT^n(0)\ucT^n(\os+\us)}.
\]
By a local limit theorem, we have that $\ocT^n(0), \ucT^n(\os+\us) > cn^{-1/2}$, while $\ocT^{\lceil n/2\rceil}(-x), \ucT^{\lceil n/2\rceil}(\os+\us-y) < Cn^{-1/2}$. Thus we have
\[
\P[\cE_1] < C\iint \cT'(x,y) \rd x\rd y = C\P[\cE'].
\]
It now suffices to bound $\P[\cE']$, the probability of an event on the random walk $\uS'+\oS'$. We denote $\uS'+\oS'$ as $\hS$ from now on, for simplicity of notations.
We use Skorokhod’s embedding of a random walk to a Brownian motion:
take a standard Brownian motion $B:\R_{\ge 0} \to \R$, there is a sequence of stopping times $0=\tau_0<\tau_1<\ldots <\tau_{\lfloor n/2\rfloor}$, such that $\{B(\tau_i)\}_{i=0}^{\lfloor n/2\rfloor}$ has the same distribution as $\{\hS(i)\}_{i=0}^{\lfloor n/2\rfloor}$.
For each $i\in\llbracket 1, \lfloor n/2\rfloor\rrbracket$, the random variables $\tau_i-\tau_{i-1}$ are i.i.d.; and by the exponential tail of each $\hS(i)-\hS(i-1)$ we have $\P[\tau_i-\tau_{i-1} > x]<Ce^{-cx^{1/3}}$.

We consider the following event on $B$. 
Let $t_0=0$. For each $j\in \N$, let $t_j = \inf \{t>t_{j-1}+cg, B(t)>-\sqrt{g}\}$, which is a stopping time.
Let $\cE_B'$ be the event where $B(t)<2\sqrt{g}$ for any $t\in [0,t_{\lfloor m/2\rfloor}]$.
For each $j$, conditioned on $t_j$ and $B(t_j)$, there is a positive probability that $\max_{t\in[t_j,t_j+cg]}B(t)>2\sqrt{g}$; so we have that $\P[\cE_B']<Ce^{-cm}$.

Under $\cE'\setminus \cE_B'$, one of the following events must happen:
\begin{itemize}
    \item $\cE_{spike}'$: there is some $i\in\llbracket 1, \lfloor n/2\rfloor\rrbracket$, such that $B(\tau_{i-1}), B(\tau_i)<\sqrt{g}$ and $B(t)>2\sqrt{g}$ for some $t\in (\tau_{i-1},\tau_i)$.
    \item $\cE_{narrow}'$: there is some $i\in\llbracket 0, \lfloor n/2\rfloor - \lfloor g\rfloor\rrbracket$, such that $\tau_{i+\lfloor g\rfloor}-\tau_i \le cg$.
\end{itemize}
This is because, assuming $\cE'\setminus \cE_{narrow}'$, there exist integers $0<i_1<\ldots<i_{\lfloor m/2\rfloor}\le \lfloor n/2\rfloor$, such that each $i_{j+1}-i_j>g$, so $\tau_{i_{j+1}}-\tau_{i_j}>cg$;
and each $\hS(\tau_{i_j})>-\sqrt{g}$.
Thus there is $t_{\lfloor m/2\rfloor}\le\tau_{i_{\lfloor m/2\rfloor}}$, 
and by further assuming that $\cE_{spike}'$ does not hold we have that $B(t)<2\sqrt{g}$ for any $t\in [0,t_{\lfloor m/2\rfloor}]$, i.e. $\cE_B'$ holds.

We have
\[
\P[\cE_{spike}']< \sum_{i=1}^{\lfloor n/2\rfloor} \P[\tau_i-\tau_{i-1} > \sqrt{g}] + \lfloor n/2\rfloor \P[\max_{t\in[0,\sqrt{g}]}B(t)>\sqrt{g}] < Cne^{-cg^{1/6}},
\]
and
\[
\P[\cE_{narrow}']\le \sum_{i=0}^{\lfloor n/2\rfloor - \lfloor g\rfloor} \P[\tau_{i+\lfloor g\rfloor}-\tau_i \le cg] < Cne^{-cg}.
\]
Thus we have that $\P[\cE']\le \P[\cE_B']+\P[\cE_{spike}']+\P[\cE_{narrow}']<Ce^{-cm}$ (since $n^{0.9}<g$ and $m<n^{0.1}$), and the conclusion follows.
\end{proof}

Define $R:\llbracket 0,n\rrbracket \to \R$ as $R(i)=(\log(n))^9\sqrt{i(n-i)/n}$.
The next result we need is on lower bounding the probability of  $\{\oS(i)-\alpha i\}_{i=0}^n$ staying above $R$.
\begin{lemma}  \label{lem:S-low-bd}
We have $\P[\oS(i)-\alpha i \ge R(i),\forall i\in \llbracket 0, n\rrbracket] > ce^{-C(\log(n))^{19}}$.
\end{lemma}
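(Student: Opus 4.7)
The plan is to construct an auxiliary event, defined by controlling $\oS$ at a geometric sequence of checkpoints, that implies the pointwise inequality and whose probability is easily estimated via a local CLT.

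First I would choose checkpoints $0 = i_0 < i_1 < \cdots < i_K = n$ with $K = O(\log n)$, consisting of dyadic scales from each end ($i_j = 2^j$ for small $j$, mirrored by $i_j = n - 2^{K-j}$ on the right half, with one or two extra points bridging the middle). Setting $M = (\log n)^9$ and $\bar{S}(i) := \oS(i) - \alpha i$, let $\cE_0$ be the event that $\bar{S}(i_k) \in [2R(i_k), 2R(i_k)+1]$ for each $0 < k < K$. Writing $\P[\cE_0]$ as a product of local-CLT densities for the increments $\oS'(i_{k+1})-\oS'(i_k)$ conditioned on $\oS'(n)=0$, each factor is bounded below by $c(i_{k+1}-i_k)^{-1/2}\exp(-C\Delta_k^2/(i_{k+1}-i_k))$, where $\Delta_k = 2(R(i_{k+1})-R(i_k))$, with constants uniform in the bounded tilt parameter $\beta$. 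A direct computation gives $\Delta_k^2/(i_{k+1}-i_k) \le CM^2 = C(\log n)^{18}$ for every $k$, so multiplying the $K = O(\log n)$ such factors yields $\P[\cE_0] \ge ce^{-C(\log n)^{19}}$. At the two extreme scales $i_k \in \{1,2\}$ the relevant increment is outside the Gaussian window, but the explicit exponential form of the tilted Laplace density contributes at worst a factor $e^{-C(\log n)^9}$, which is easily absorbed.

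Conditional on $\cE_0$, the Markov property decomposes $\bar{S}$ into $K$ independent random walk bridges between consecutive checkpoints $(i_k, 2R(i_k))$ and $(i_{k+1}, 2R(i_{k+1}))$. On each such interval the barrier $R$ is bounded above by $\max\{R(i_k), R(i_{k+1})\}$, while the straight-line interpolation of the checkpoint values lies at least $(2-\sqrt{2})\min\{R(i_k), R(i_{k+1})\}$ above it. By construction $\min\{R(i_k), R(i_{k+1})\}$ exceeds $M\sqrt{i_{k+1}-i_k}$ up to a constant factor, so the available margin dominates the typical $O(\sqrt{i_{k+1}-i_k})$ fluctuation of the small bridge by a factor of $M$. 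A standard sup bound for random walk bridges with sub-exponential increments (Doob's inequality together with a local CLT normalization of the endpoint conditioning) then gives a failure probability of at most $e^{-cM^2}$ per interval, and a union bound over the $K = O(\log n)$ intervals keeps the conditional probability of staying above $R$ everywhere at least $1/2$.

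Combining, $\P[\bar{S}(i) \ge R(i) \text{ for all } i \in \llbracket 0, n\rrbracket] \ge \tfrac{1}{2}\P[\cE_0] \ge ce^{-C(\log n)^{19}}$, as required. The main technical nuisance I anticipate is maintaining uniformity of the local CLT constants in the bounded tilt parameter $\beta$ and handling the extreme scales $i_k \in \{1,2\}$ via the explicit Laplace density rather than Gaussian asymptotics; neither is a serious obstacle, but tracking the constants uniformly across all $O(\log n)$ scales requires care.
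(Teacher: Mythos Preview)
Your approach is essentially the same as the paper's: both force the walk through windows at dyadic checkpoints at a fixed multiple of $R$, and both obtain the $e^{-C(\log n)^{19}}$ bound from $O(\log n)$ factors each of size $e^{-C(\log n)^{18}}$. The implementation differs in a few places: the paper works with the \emph{unconditioned} walk $\oS'$ and converts to the bridge only at the end via the density ratio $\ocT^n(0)^{-1}$, uses checkpoint heights $10R$ rather than $2R$ (giving more margin), and treats the middle third $[2^{j_0},n-2^{j_0}]$ by subtracting a bad event $\cD_c$ rather than as one long bridge interval; the per-interval estimate is packaged as a separate tilting lemma (Lemma~\ref{lem:rw-bd-tr}) rather than appealing directly to a local CLT.

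Two small points in your write-up would need patching. First, the local CLT lower bound $c\ell^{-1/2}e^{-C\Delta^2/\ell}$ fails not just for $i_k\in\{1,2\}$ but for all scales with $i_k\lesssim M^2=(\log n)^{18}$; the correct cost over those $O(\log\log n)$ intervals is $e^{-CM^2}$ rather than $e^{-CM}$, still absorbed. Second, your claim that ``the barrier $R$ is bounded above by $\max\{R(i_k),R(i_{k+1})\}$'' is false on the middle interval (where $R$ is concave and peaks at $n/2$), and your margin claim $(2-\sqrt{2})\min\{R(i_k),R(i_{k+1})\}$ fails for the first interval since $R(0)=0$; both are easily fixed, e.g.\ by taking $i_1=1$ and noting that $2R(2^{j_0})$ still exceeds $R(n/2)$ by $\Theta(M\sqrt n)$.
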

For its proof we need the following technical lemma.
\begin{lemma}  \label{lem:rw-bd-tr}
For any $k\in \N$, $k<Cn$, and $s\in \R$, let $\cD_{k,s}$ be the event where $\oS'(i)>-(\log(n))^2\sqrt{k}$ for each $i\in\llbracket 0, k \rrbracket$, and $\oS'(k)\in [s, s+C\sqrt{k}]$.
We then have that $\P[\cD_{k,s}]>ce^{-C(\log(n))^{18}}$, when $0\le s \le (\log(n))^9\sqrt{k}$.
\end{lemma}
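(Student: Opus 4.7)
The plan is to use an exponential change of measure to push the random walk toward the target endpoint $s$, combined with a local CLT and a random-walk barrier estimate under the tilted law. Recall that $\alpha=\os/n$ satisfies $|\alpha|\le C_0$, so the associated $\beta$ obeys $|\beta|\le 1/2-c_0$, and the step distribution of $\oS'$ has density $f(x)\propto e^{-|x-\alpha|/2-\beta x}$ with uniformly exponential tails, bounded variance $\sigma^2$, and cumulant generating function $\Lambda(\theta):=\log\E[e^{\theta X_1}]$ that is smooth on a fixed interval $(-\theta_0,\theta_0)$ with $\Lambda(0)=\Lambda'(0)=0$ and $\Lambda''(0)=\sigma^2$, where $\theta_0$ depends only on $C_0$.

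\emph{Main regime}, $k\ge K_0(\log n)^{18}$ for a large constant $K_0$: choose $\theta^*$ with $\Lambda'(\theta^*)=s/k$. Since $0\le s/k\le (\log n)^9/\sqrt{k}\le 1/\sqrt{K_0}$, a Taylor expansion gives $\theta^*=s/(\sigma^2 k)+O((s/k)^2)$, comfortably inside $(-\theta_0,\theta_0)$. Define the tilted law $\tilde\P$ by $d\tilde\P/d\P=e^{\theta^*\oS'(k)-k\Lambda(\theta^*)}$; under $\tilde\P$ the increments are i.i.d.\ with mean $s/k$, bounded variance, and uniformly exponential tails. A local CLT gives $\tilde\P[\oS'(k)\in[s,s+C\sqrt{k}]]\ge c$. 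Writing $\oS'(i)=is/k+M_i$, a standard exponential maximal inequality for the centered walk $M_i$ yields $\tilde\P[\min_{0\le i\le k}M_i\le -(\log n)^2\sqrt{k}]\le e^{-c(\log n)^4}$, which is negligible compared to the first estimate. Hence $\tilde\P[\cD_{k,s}]\ge c/2$. Inverting the tilt and using $\oS'(k)\le s+C\sqrt{k}$ on $\cD_{k,s}$,
\[
\P[\cD_{k,s}]\ge e^{-\theta^*(s+C\sqrt{k})+k\Lambda(\theta^*)}\,\tilde\P[\cD_{k,s}].
\]
A further Taylor expansion gives $k\Lambda(\theta^*)-\theta^* s=-\frac{s^2}{2\sigma^2 k}(1+o(1))$ and $\theta^*\sqrt{k}=O((\log n)^9)$, so the exponent is bounded below by $-C(\log n)^{18}$, yielding the claim.

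\emph{Small-$k$ regime}, $k<K_0(\log n)^{18}$: here $s\le (\log n)^9\sqrt{k}\le C(\log n)^{18}$, and the barrier $-(\log n)^2\sqrt{k}$ is comparable to or larger than the typical random-walk fluctuation, so a direct ``one big jump'' argument suffices. The uniform lower bound $f(x)\ge c e^{-C|x|}$ gives $\P[X_1\in[s,s+1]]\ge c e^{-Cs}$; on this event $\oS'(1)\ge 0$. Applying a local CLT to the remaining $k-1$ steps, the conditional probability that $\sum_{i=2}^k X_i\in[-\tfrac{1}{2},C\sqrt{k}-1]$ and $\min_{2\le j\le k}\sum_{i=2}^j X_i>-(\log n)^2\sqrt{k}/2$ is at least a positive constant, since the barrier comfortably exceeds the $O(\sqrt{k})$ fluctuation scale. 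Combining gives $\P[\cD_{k,s}]\ge c e^{-Cs}\ge c e^{-C(\log n)^{18}}$.

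The principal technical obstacle is maintaining uniform constants in the parameter $\alpha$, since the step distribution of $\oS'$ depends on $\os$ through $\beta$; this is handled by the hypothesis $|\alpha|\le C_0$, which forces $\beta$, $\sigma^2$, and the radius $\theta_0$ of analyticity of $\Lambda$ to depend only on $C_0$. A secondary difficulty is that the natural tilt parameter $\theta^*$ can leave the domain of $\Lambda$ once $s/k$ is of order one, which is precisely why the two regimes must be handled separately and why the small-$k$ regime requires the ``one big jump'' replacement for the tilt.
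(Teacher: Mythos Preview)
Your proof is correct and follows essentially the same approach as the paper: an exponential tilt to drift $s/k$, a local CLT for the endpoint, and an exponential bound for the barrier in the main regime, together with a direct estimate when $k$ is below a polylogarithmic threshold. The only difference is cosmetic: for small $k$ the paper simply confines the walk to $[s,s+C\sqrt{k}]$ at every step (cost $e^{-Cs-Ck}$), whereas you use a single large first jump followed by a CLT for the remainder; both yield the same bound, and your endpoint interval $[-1/2,C\sqrt{k}-1]$ should be shifted slightly (e.g.\ restrict the first jump to $[s+1/2,s+1]$) to make the inclusion airtight.
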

\begin{proof}
We assume that $n$ is large enough, since otherwise the result follows by taking $C$ large and $c$ small enough.

When $k<C(\log(n))^{18}$, we have $\P[\cD_{k,s}]>\P[\oS'(i) \in [s,s+C\sqrt{k}],\forall i\in \llbracket 1,k\rrbracket] > ce^{-Cs-C\sqrt{k}}>ce^{-C(\log(n))^{18}}$, and the conclusion holds.

Now we assume that $k\ge C(\log(n))^{18}$.
Recall that $X$ is the random variable such that each $\oS'(i)-\oS'(i-1)$ has the same distribution as $X$.
Take $\theta\ge 0$ such that $\frac{\E[Xe^{\theta X}]}{\E[e^{\theta X}]}=\frac{s}{k}$. 
By taking Taylor expansions, and using that $\E[X]=0$, we have that for $\theta$ near zero, $|\E[Xe^{\theta X}]-\E[X^2]\theta|<C\theta^2$, and $|\E[e^{\theta X}] - 1| < C\theta^2$.
Then since $\frac{s}{k} \le \frac{(\log(n))^9}{\sqrt{k}}\le c$, and $\frac{\E[Xe^{\theta X}]}{\E[e^{\theta X}]}$ is non-decreasing in $\theta$, we have $\frac{cs}{k} < \theta< \frac{Cs}{k}$.
We consider the random variable $Y$, where \[\P[Y\in[y, y+\rd y)]=\P[X\in[y, y+\rd y)]e^{\theta y}\E[e^{\theta X}]^{-1}.\] We then have $\E[Y]=\frac{s}{k}$. Let $Y_1, Y_2, \ldots$ be an infinite sequence of independent copies of $Y$, and $S^*(i)=\sum_{j=1}^i Y_j$ for any $j\in \N$.
Then we have
\[
\P[\cD_{k,s}] \ge e^{-\theta(s+C\sqrt{k})}\E[e^{\theta X}]^k\P[0<S^*(k)-s<C\sqrt{k}, S^*(i)>-(\log(n))^2\sqrt{k}, \forall i\in\llbracket 0, k\rrbracket].
\]
Using that $|\E[e^{\theta X}] - 1| < C\theta^2$, we have
\[
e^{-\theta(s+C\sqrt{k})}\E[e^{\theta X}]^k > e^{-\theta(s+C\sqrt{k}) - Ck\theta^2} > e^{-Cs^2/k-Cs/\sqrt{k}} > ce^{-C(\log(n))^{18}},\]
where the second inequality is by $\theta< \frac{Cs}{k}$, and the third inequality is by $s \le (\log(n))^9\sqrt{k}$.

It now remains to prove $\P[0<S^*(k)-s<C\sqrt{k}, S^*(i)>-(\log(n))^2\sqrt{k}, \forall i\in\llbracket 0, k\rrbracket]>c$.
For each $i\in \llbracket 1,k\rrbracket$ we have
\[
\P[S^*(i)<-(\log(n))^2\sqrt{k}]
\le e^{-(\log(n))^2} \E[e^{-S^*(i)/\sqrt{k}}]
=
e^{-(\log(n))^2} \E[e^{-Y/\sqrt{k}}]^i.
\]
Like estimating $\E[e^{\theta X}]$ above, we have $|\E[e^{-Y/\sqrt{k}}]-(1-\E[Y]/\sqrt{k})|\le C\E[Y^2]/k < C/k$ by taking the Taylor expansion of $e^{-Y/\sqrt{k}}$.
Thus we have $\P[S^*(i)<-(\log(n))^2\sqrt{k}] < e^{-(\log(n))^2-\E[Y]\sqrt{k} + C} < Ce^{-(\log(n))^2}$.
By taking a union bound over $i$ we have $\P[S^*(i)>-(\log(n))^2\sqrt{k}, \forall i\in\llbracket 0, k \rrbracket] > \frac{7}{8}$, as $k<Cn$ and $n$ is large enough.
We also have that $\P[0<S^*(k)-s<C\sqrt{k}]>1/4$ when $C$ is large, by a local limit theorem.
Thus the conclusion follows.
\end{proof}

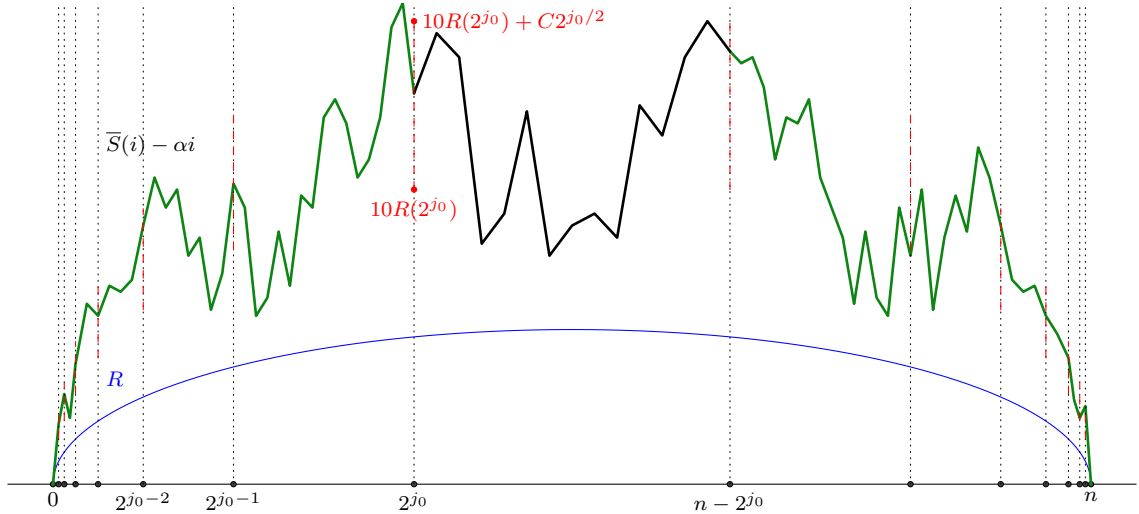
\begin{figure}[hbt!]
    \centering
\begin{tikzpicture}[line cap=round,line join=round,>=triangle 45,x=.6cm,y=.6cm]
\clip(0,-0.8) rectangle (25,8);

\draw (0,0) -- (25,0);

\draw [fill=uuuuuu] (1,0) circle (1.0pt);
\draw [fill=uuuuuu] (24,0) circle (1.0pt);
\draw [fill=uuuuuu] (1.125,0) circle (1.0pt);
\draw [fill=uuuuuu] (1.25,0) circle (1.0pt);
\draw [fill=uuuuuu] (1.5,0) circle (1.0pt);
\draw [fill=uuuuuu] (2,0) circle (1.0pt);
\draw [fill=uuuuuu] (3,0) circle (1.0pt);
\draw [fill=uuuuuu] (5,0) circle (1.0pt);
\draw [fill=uuuuuu] (9,0) circle (1.0pt);

\draw [fill=uuuuuu] (23.875,0) circle (1.0pt);
\draw [fill=uuuuuu] (23.75,0) circle (1.0pt);
\draw [fill=uuuuuu] (23.5,0) circle (1.0pt);
\draw [fill=uuuuuu] (23,0) circle (1.0pt);
\draw [fill=uuuuuu] (22,0) circle (1.0pt);
\draw [fill=uuuuuu] (20,0) circle (1.0pt);
\draw [fill=uuuuuu] (16,0) circle (1.0pt);

\draw[domain=1:24.0, samples=800, variable=\x, blue] plot ({\x}, {sqrt((\x-1)*(24-\x)/20)});

\draw [line width=1pt, darkgreen] (1,0) -- (1.125,1.) -- (1.25,1.5) -- (1.375,1.1) -- (1.5,2) -- (1.75,3) -- (2,2.8) -- (2.25,3.3) -- (2.5,3.2) -- (2.75,3.4) -- (3,4.3) -- (3.25,5.1) -- (3.5,4.6) -- (3.75,4.9) -- (4,3.8) -- (4.25,4.1) -- (4.5,2.9) -- (4.75,3.5) -- (5,5) -- (5.25,4.6) -- (5.5,2.8) -- (5.75,3.1) -- (6,4.2) -- (6.25,3.3) -- (6.5,4.8) -- (6.75,4.6) -- (7,6.1) -- (7.25,6.4) -- (7.5,6) -- (7.75,5.1) -- (8,5.4) -- (8.25,6.1) -- (8.5,7.6) -- (8.75,8.) -- (9,6.5);

\draw [line width=1pt, darkgreen] (24,0) -- (23.875,1.3) -- (23.75,1.1) -- (23.625,1.4) -- (23.5,2.1) -- (23.25,2.5) -- (23,2.8) -- (22.75,3.3) -- (22.5,3.2) -- (22.25,3.4) -- (22,4.3) -- (21.75,5.1) -- (21.5,5.6) -- (21.25,4.2) -- (21,4.8) -- (20.75,4.1) -- (20.5,2.9) -- (20.25,4.9) -- (20,3.8) -- (19.75,4.6) -- (19.5,2.8) -- (19.25,3.1) -- (19,4.2) -- (18.75,3) -- (18.5,4.1) -- (18.25,4.6) -- (18,5.1) -- (17.75,6.4) -- (17.5,6) -- (17.25,6.1) -- (17,5.4) -- (16.75,6.6) -- (16.5,7.1) -- (16.25,7.) -- (16,7.2);

\draw [line width=1pt] (9,6.5) -- (9.5,7.5) -- (10,7.1) -- (10.5,4) -- (11,4.5) -- (11.5,6.2) -- (12,3.8) -- (12.5,4.3) -- (13,4.5) -- (13.5,4.1) -- (14,6.3) -- (14.5,5.8) -- (15,7.1) -- (15.5,7.7) -- (16,7.2);

\foreach \i in {1.125,1.25,1.5,2,3,5,9,16,20,22,23,23.5,23.75,23.875}
{
\draw [dotted] (\i,0) -- (\i,10);
\draw [dashed, red] (\i,{sqrt((\i-1)*(24-\i)/5)}) -- (\i,{sqrt((\i-1)*(24-\i)/2)});
}

\begin{scriptsize}
\draw (1,0) node[anchor=north]{$0$};
\draw (24,0) node[anchor=north]{$n$};

\draw (3,0) node[anchor=north]{$2^{j_0-2}$};
\draw (5,0) node[anchor=north]{$2^{j_0-1}$};
\draw (9,0) node[anchor=north]{$2^{j_0}$};
\draw (16,0) node[anchor=north]{$n-2^{j_0}$};

\draw (2,6) node[anchor=north west]{$\oS(i)-\alpha i$};
\draw (2,2) node[anchor=north west, color=blue]{$R$};

\draw (9,4.9) node[anchor=north, color=red]{$10R(2^{j_0})$};
\draw (9,7.7) node[anchor=west, color=red]{$10R(2^{j_0})+C2^{j_0/2}$};
\end{scriptsize}

\draw [color=red, fill=red] (9,7.7) circle (1.0pt);
\draw [color=red, fill=red] (9,4.9) circle (1.0pt);

\end{tikzpicture}
\caption{An illustration of the proof of Lemma \ref{lem:S-low-bd}: the random walk bridge $\oS(i)-\alpha i$ stays above $R$, and passes through each red segment. We also work on the domains $\llbracket 0, 2^{j_0}\rrbracket$, $\llbracket 2^{j_0}, n-2^{j_0}\rrbracket$, and $\llbracket n-2^{j_0}, n\rrbracket$ separately.}
\label{fig:rwb-esti}
\end{figure}

\begin{proof}[Proof of Lemma \ref{lem:S-low-bd}]
The general idea is to do a dyadic decomposition of $\llbracket 0, n \rrbracket$ from both ends, and apply Lemma \ref{lem:rw-bd-tr}.
We assume that $n$ is large enough, since otherwise the statement holds obviously.
Let $j_0$ be the largest integer with $2^{j_0+2}<n$, and assume that $j_0>1$. We then define several events on $\oS'$:
\begin{enumerate}
    \item Let $\cD_l$ denote the following event: for each $i\in\llbracket 0, 2^{j_0}\rrbracket$ there is $\oS'(i)\ge R(i)$; and for each $0\le j \le j_0$ there is $10R(2^j)<\oS'(2^j)<10R(2^j)+C2^{j/2}$.
    \item Similarly, we denote $\cD_r$ as the event where $\oS'(i)\le -R(i)$ for each $i\in\llbracket 0, 2^{j_0}\rrbracket$, and $-10R(2^j)-C2^{j/2}<\oS'(2^j)<-10R(2^j)$ for each $0\le j \le j_0$.
    \item Denote $\cD_{c}$ as the event where there exists some $i\in \llbracket 0, n-2^{j_0+1}\rrbracket$ such that $\oS'(i)<-5R(2^{j_0}+i)$.
\end{enumerate}
We can now write
\begin{multline}\label{eq:S-low-bd-pf1}
\P[\oS(i)-\alpha i \ge R(i),\forall i\in \llbracket 0, n\rrbracket]
> \iint  \P[\cD_c^c, \oS'(n-2^{j_0+1})+x+y\in [0, \rd z) ]/\rd z \\
\times \P[\cD_l, \oS'(2^{j_0})\in [x, x+\rd x)]
\P[\cD_r, \oS'(2^{j_0})\in [y,y+\rd y) ].
\end{multline}
We note that for the integral in the right hand side, it is actually over $x,y$ with $x, -y\in (10R(2^{j_0}), 10R(2^{j_0})+C2^{j_0/2})$, and $|x+y|<C2^{j_0/2}$.
For such $x,y$, by a local limit theorem 
(and recall that $\ocT^i$ is the $i$-step transition probability of $\oS'$) we have
\begin{equation}  \label{eq:S-low-bd-pf2}
\P[\oS'(n-2^{j_0+1})+x+y\in [0, \rd z)]/\rd z = \ocT^{n-2^{j_0+1}}(-x-y) > cn^{-1/2}.
\end{equation}
We also write
\begin{multline*}
\P[\cD_c, \oS'(n-2^{j_0+1})+x+y\in [0, \rd z)]/\rd z
\le
\int \P[\cD_{c,l}, \oS'(\lfloor n/2-2^{j_0}\rfloor)\in [z, z+\rd z)] \ocT^{\lceil n/2-2^{j_0}\rceil}(-x-y-z) 
\\
+\int \P[\cD_{c,r}, \oS'(\lceil n/2-2^{j_0}\rceil)+x+y\in[z, z+\rd z)] \ocT^{\lfloor n/2-2^{j_0}\rfloor}(-z),
\end{multline*}
where $\cD_{c,l}$ denotes the event where there exists some $i\in \llbracket 0, \lfloor n/2-2^{j_0}\rfloor\rrbracket$ such that $\oS'(i)<-5R(2^{j_0}+i)$, and $\cD_{c,r}$ denotes the event where there exists some $i\in \llbracket 0, \lceil n/2-2^{j_0}\rceil\rrbracket$ such that $-x-y-\oS'(i)<-5R(n-2^{j_0}-i)$.
For each $i\in\llbracket 1, \lfloor n/2-2^{j_0}\rfloor\rrbracket$, we have
\[
\P[\oS'(i)<-5R(2^{j_0}+i)]
\le e^{-5R(2^{j_0}+i)/\sqrt{n}} \E[e^{-\oS'(i)/\sqrt{n}}]
= e^{-5R(2^{j_0}+i)/\sqrt{n}} \E[e^{-X/\sqrt{n}}]^i
<Ce^{-c(\log(n))^9},
\]
and via a union bound over $i$ we get $\P[\cE_{c,l}]<Ce^{-c(\log(n))^9}$.
Similarly we have $\P[\cE_{c,r}]<Ce^{-c(\log(n))^9}$.
By a local limit theorem we have $\ocT^{\lceil n/2-2^{j_0}\rceil}(-x-y-z), \ocT^{\lfloor n/2-2^{j_0}\rfloor}(-z) < Cn^{-1/2}$ for any $z\in\R$.
We then conclude that $\P[\cD_c, \oS'(n-2^{j_0+1})+x+y\in [0, \rd z)]/\rd z<Ce^{-c(\log(n))^9}$.
By plugging this and \eqref{eq:S-low-bd-pf2} into \eqref{eq:S-low-bd-pf1} we get
\[
\P[\oS(i)-\alpha i \ge R(i),\forall i\in \llbracket 0, n\rrbracket] > cn^{-1/2} \P[\cD_l]\P[\cD_r].
\]
By applying Lemma \ref{lem:rw-bd-tr} repeatedly, we have $\P[\cD_l]>ce^{-Cj_0(\log(n))^{18}} > ce^{-C(\log(n))^{19}}$; and by similar arguments we can have $\P[\cD_r] > ce^{-C(\log(n))^{19}}$. By plugging these bounds into the previous expression, the conclusion follows.
\end{proof}

\subsection{Fluctuation of the point-to-line profile}
In this subsection we prove Proposition \ref{prop:multiple-peak}, using the Gibbs property (Lemma \ref{lem:T-Gibbs}) and the estimates given by Lemma \ref{lem:S-multi-peak}, \ref{lem:S-low-bd}.

\begin{proof}[Proof of Proposition \ref{prop:multiple-peak}]
In this proof the constants $c,C>0$ would depend on $\psi$, and (as usual) the values can change from line to line.

We consider the following two events.
We let $\MP_1$ be the event where there exist $m$ points $w_1, \ldots, w_m \in \L_h$, such that
\begin{enumerate}
    \item $ad(w_{i+1})-ad(w_i)>g$, for each $i=1,\ldots, m-1$,
    \item $T_{\boo,w_i}+T_{w_i+(1,0),u} > \max_{w\in \L_h}T_{\boo,w}+T_{w+(1,0),u}-\sqrt{g}$, for each $i=1,\ldots, m$.
\end{enumerate}
Similarly, we let $\MP_2$ be the event where there exist $m$ points $w_1, \ldots, w_m \in \L_h$, such that
\begin{enumerate}
    \item $ad(w_{i+1})-ad(w_i)>g$, for each $i=1,\ldots, m-1$,
    \item $T_{\boo,w_i}+T_{w_i+(0,1),u} > \max_{w\in \L_h}T_{\boo,w}+T_{w+(0,1),u}-\sqrt{g}$, for each $i=1,\ldots, m$.
\end{enumerate}
We then have that $\MP(\boo,u;h,g,2m)\subset \MP_1\cup \MP_2$.
By symmetry it suffices to bound $\P[\MP_1]$.

For each $l<r \in \Z$, we denote $\MP[l,r]$ as the event where there exists $(l,h-l)=w_1, \ldots, w_m=(r,h-r) \in \L_h$, satisfying the two conditions in defining the event $\MP_1$.
We let $\cF$ be the $\sigma$-algebra, generated by $\{W^2_{\boo,v}\}_{v \in \L_{h+1}}$, $T_{\boo,(l,h-l)}$, $T_{\boo,(r,h-r)}$, and $\{W^2_{v,u}\}_{v\in \L_h}$, $T_{(l+1,h-l),u}$, $T_{(r+1,h-r),u}$.
Note that $\{W^2_{\boo,v}\}_{v \in \L_{h+1}}$ is determined by $\{\xi_v:d(v)\le h\}$, and $\{W^2_{v,u}\}_{v\in \L_h}$ is determined by $\{\xi_v:d(v)\ge h+1\}$; thus $\{W^2_{\boo,v}\}_{v \in \L_{h+1}}$, $T_{\boo,(l,h-l)}$, $T_{\boo,(r,h-r)}$ are independent of $\{W^2_{v,u}\}_{v\in \L_h}$, $T_{(l+1,h-l),u}$, $T_{(r+1,h-r),u}$.

Denote $\uA(i)=\frac{(r-i)T_{\boo,(l,h-l)} + (i-l)T_{\boo,(r,h-r)}}{r-l}$,
we then let $\ucG$ be the event where \[W^2_{\boo,(l+i,h+1-l-i)}\vee W^2_{\boo,(l+i+1,h-l-i)}<\uA(i)+(\log(n))^8 \sqrt{i(r-l-i)/(r-l)},\quad \forall i \in \llbracket 0,r-l\rrbracket .\]
Also denote $\oA(i)=\frac{(r-i)T_{(l+1,h-l),u} + (i-l)T_{(r+1,h-r),u}}{r-l}$, and let $\ocG$ be the event where \[W^2_{(l+i,h-l-i),u}\vee W^2_{(l+i+1,h-l-i-1),u}<\oA(i)+(\log(n))^8 \sqrt{i(r-l-i)/(r-l)},\quad \forall i \in \llbracket 0,r-l\rrbracket .\]
Let $\cE_1$ be the event where $|T_{\boo,(r,h-r)}-T_{\boo,(l,h-l)}+T_{(r+1,h-r),u}-T_{(l+1,h-l),u}|<\sqrt{g}$, and $\cE_2$ be the event where 
$|T_{\boo,(r,h-r)}-T_{\boo,(l,h-l)}|, |T_{(r+1,h-r),u}-T_{(l+1,h-l),u}|<C(r-l)$.
Note that $\ucG, \ocG, \cE_1, \cE_2$ are all $\cF$ measurable, and their definitions rely on $l,r$.

Recall that $S:\Z_{\ge 0}\to \R$ is the random walk where each step $S(i)-S(i-1) \sim \LAP(0,2)$.
Let $\uT, \oT :\llbracket 0, r-l\rrbracket \to \R$ be two independent copies of $S$ on $\llbracket 0, r-l\rrbracket$, conditioned on that $\uT(r-l)=T_{\boo,(r,h-r)}-T_{\boo,(l,h-l)}$ and $\uT(r-l)=T_{(r+1,h-r),u}-T_{(l+1,h-l),u}$.
Let $\cE_T$ be the event where $\uT(i)+\oT(i)<\sqrt{g}$ for each $0\le i \le r-l$, and there exist integers $0=i_1<\ldots<i_m=r-l$, such that $i_{j+1}-i_j>g$ and $\oT(i_j)+\uT(i_j)>-\sqrt{g}$ for each $j$.
Via Lemma \ref{lem:T-Gibbs}, we have
\[
\begin{split}
\P[\MP[l,r] | \cF] < & \don[\cE_1]\P[\cE_T  | \cF]\\
&\times
\P[\uT(i)\ge W^2_{\boo,(l+i,h+1-l-i)}\vee W^2_{\boo,(l+i+1,h-l-i)},\forall 0\le i \le r-l | \cF]^{-1}
\\
&\times
\P[\oT(i)\ge W^2_{(l+i,h-l-i),u}\vee W^2_{(l+i+1,h-l-i-1),u}, \forall 0\le i \le r-l | \cF]^{-1}.
\end{split}
\]
We now assume that $r-l<(\log(n))^2h^{2/3}$.
By Lemma \ref{lem:S-multi-peak}, we have $\don[\cE_1\cap\cE_2]\P[\cE_T  | \cF]<Ce^{-cm}$.
By Lemma \ref{lem:S-low-bd}, we have
\[
\don[\cE_1\cap\cE_2\cap\ucG]\P[\uT(i)\ge W^2_{\boo,(l+i,h+1-l-i)}\vee W^2_{\boo,(l+i+1,h-l-i)},\forall 0\le i \le r-l | \cF]^{-1} < Ce^{C(\log(n))^{19}},
\]
and
\[
\don[\cE_1\cap\cE_2\cap\ocG]\P[\oT(i)\ge W^2_{(l+i,h-l-i),u}\vee W^2_{(l+i+1,h-l-i-1),u}, \forall 0\le i \le r-l | \cF]^{-1} < Ce^{C(\log(n))^{19}}.
\]
Thus we conclude that $\don[\cE_2\cap \ucG\cap\ocG]\P[\MP[l,r] | \cF] <  Ce^{-cm}$. 

We now take $p$ as the largest integer with $\frac{p}{h-p}\le \frac{u_1}{u_2}$.
Let $\cE_{devi}$ be the event where there exist $p-\log(n)h^{2/3}/2 < i < j <p+\log(n)h^{2/3}/2$, such that
\[
|(T_{\boo,(i,h-i)} - D_{(i,h-i)} )-(T_{\boo,(j,h-j)}- D_{(j,h-j)})| > (\log(n))^7 \sqrt{j-i},
\]
or
\[
|(T_{(i+1,h-i),u} - D_{u-(i+1,h-i)} )-(T_{(j+1,h-j),u} - D_{u-(j+1,h-j)})| > (\log(n))^7 \sqrt{j-i}.
\]
Note that $\cE_{devi}^c$ implies $\cE_2, \ucG, \ocG$ for each $l<r$ with $p-l,r-p<\log(n)h^{2/3}/2$. 
Let $\cE_{trans}$ be the event where 
\[
\max_{|i-p|\ge \log(n)h^{2/3}/2}
T_{\boo,(i,h-i)}+T_{(i+1,h-i),u} > \max_{w\in \L_h}T_{\boo,w}+T_{w+(1,0),u}-\sqrt{g}.
\]
By summing over all pairs of $l<r$ with $p-l,r-p<\log(n)h^{2/3}/2$, and taking the expectation over $\cF$, we have
\[
\P[\MP_1]< \P[\cE_{devi}] + \P[\cE_{trans}] + Ce^{-cm}.
\]
Finally, for any $i\in\Z$ with $|i-p|\ge \log(n)h^{2/3}/2$, there is $D_{i,h-i}+D_{u-(i+1,h-i)} < D_{p,h-p}+D_{u-(p+1,h-p)} - c(\log(n))^2n^{1/3}$.
By Theorem \ref{t:onepoint} and a union bound, we get $\P[\cE_{trans}]<Ce^{-c(\log(n))^3}$.
By Lemma \ref{lem:lpp-devi} we have $\P[\cE_{devi}]<Ce^{-c(\log(n))^2}$. Thus the conclusion follows.
\end{proof}

\bibliography{slowbond}

\begin{thebibliography}{10}

\bibitem{BFPP}
J.~Baik, P.~L. Ferrari, and S.~P{\'e}ch{\'e}.
\newblock {Limit process of stationary TASEP near the characteristic line}.
\newblock {\em Comm. Pure Appl. Math.}, 63(8):1017--1070, 2010.

\bibitem{BL}
J.~Baik and Z.~Liu.
\newblock {Multipoint distribution of periodic TASEP}.
\newblock {\em J. Amer. Math. Soc.}, 32(3):609--674, 2019.

\bibitem{basu2021small}
R.~Basu and M.~Bhatia.
\newblock Small deviation estimates and small ball probabilities for geodesics in last passage percolation.
\newblock arXiv preprint arXiv:2101.01717, 2021.

\bibitem{BG21}
R.~Basu and S.~Ganguly.
\newblock Time correlation exponents in last passage percolation.
\newblock In {\em In and out of equilibrium 3: Celebrating Vladas Sidoravicius}, pages 101--123. Springer, 2021.

\bibitem{basu2019temporal}
R.~Basu, S.~Ganguly, and L.~Zhang.
\newblock Temporal correlation in last passage percolation with flat initial condition via brownian comparison.
\newblock {\em Comm. Math. Phys.}, 383(3):1805--1888, 2021.

\bibitem{BHS}
R.~Basu, C.~Hoffman, and A.~Sly.
\newblock Nonexistence of bigeodesics in planar exponential last passage percolation.
\newblock {\em Comm. Math. Phys.}, 389(1):1--30, 2022.

\bibitem{basu2019coalescence}
R.~Basu, S.~Sarkar, and A.~Sly.
\newblock Coalescence of geodesics in exactly solvable models of last passage percolation.
\newblock {\em J. Math. Phys.}, 60(9):093301, 2019.

\bibitem{BSS14}
R.~Basu, V.~Sidoravicius, and A.~Sly.
\newblock Last passage percolation with a defect line and the solution of the slow bond problem.
\newblock arXiv preprint arXiv:1408.3464.

\bibitem{BFS}
A.~Borodin, P.~L. Ferrari, and T.~Sasamoto.
\newblock {Large time asymptotics of growth models on space-like paths II: {P}{N}{G} and parallel TASEP}.
\newblock {\em Comm. Math. Phys.}, 283(2):417--449, 2008.

\bibitem{BFS08}
A.~Borodin, P.~L. Ferrari, and T.~Sasamoto.
\newblock {Transition between Airy$_1$ and Airy$_2$ processes and TASEP fluctuations}.
\newblock {\em Comm. Pure Appl. Math.}, 61(11):1603--1629, 2008.

\bibitem{CLST}
O.~Costin, J.~L. Lebowitz, E.~R. Speer, and A.~Troiani.
\newblock The blockage problem.
\newblock {\em Bull. Inst. Math. Acad. Sinica (New Series)}, 8(1):47--72, 2013.

\bibitem{Cou11}
D.~Coupier.
\newblock Multiple geodesics with the same direction.
\newblock {\em Electron. Commun. Probab.}, 16:517--527, 2011.

\bibitem{DNV}
D.~Dauvergne, M.~Nica, and B.~Vir{\'a}g.
\newblock Uniform convergence to the {A}iry line ensemble.
\newblock arXiv preprint arXiv:1907.10160, 2019.

\bibitem{DOV}
D.~Dauvergne, J.~Ortmann, and B.~Vir\'ag.
\newblock The directed landscape.
\newblock arXiv preprint arXiv:1812.00309, 2018.

\bibitem{DV21}
D.~Dauvergne and B.~Vir{\'a}g.
\newblock The scaling limit of the longest increasing subsequence.
\newblock arXiv preprint arXiv:2104.08210, 2021.

\bibitem{FP}
P.~A. Ferrari and L.~P.~R. Pimentel.
\newblock Competition interfaces and second class particles.
\newblock {\em Ann. Probab.}, 33(4):1235--1254, 2005.

\bibitem{FP05}
P.A. Ferrari and L.P.R. Pimentel.
\newblock Competition interfaces and second class particles.
\newblock {\em Ann. Probab.}, 33(4):1235--1254, 2005.

\bibitem{GZloc}
S.~Ganguly and L.~Zhang.
\newblock Discrete geodesic local time converges under {KPZ} scaling.
\newblock arXiv preprint arXiv:2212.09707, 2022.

\bibitem{GZfract}
S.~Ganguly and L.~Zhang.
\newblock Fractal geometry of the space-time difference profile in the directed landscape via construction of geodesic local times.
\newblock arXiv preprint arXiv:2204.01674, 2022.

\bibitem{HTdN}
M.~Ha, J.~Timonen, and M.~den Nijs.
\newblock Queuing transitions in the asymmetric simple exclusion process.
\newblock {\em Phys. Rev. E}, 68(5):056122, 2003.

\bibitem{Ham16}
A.~Hammond.
\newblock Brownian regularity for the {A}iry line ensemble, and multi-polymer watermelons in {B}rownian last passage percolation.
\newblock {\em Mem. Amer. Math. Soc.}, 277(1363), 2022.

\bibitem{JL92}
S.~A. Janowsky and J.~L. Lebowitz.
\newblock Finite-size effects and shock fluctuations in the asymmetric simple-exclusion process.
\newblock {\em Phys. Rev. A}, 45(2):618, 1992.

\bibitem{Jo99}
K.~Johansson.
\newblock Shape fluctuations and random matrices.
\newblock {\em Comm. Math. Phys.}, 209(2):437--476, 2000.

\bibitem{Jo00trans}
K.~Johansson.
\newblock Transversal fluctuations for increasing subsequences on the plane.
\newblock {\em Probab. Theory Relat. Fields}, 116(4):445--456, 2000.

\bibitem{LR10}
M.~Ledoux and B.~Rider.
\newblock Small deviations for {B}eta ensembles.
\newblock {\em Electron. J. Probab.}, 15:1319--1343, 2010.

\bibitem{L19}
Z.~Liu.
\newblock Multi-time distribution of {TASEP}.
\newblock {\em Ann. Probab.}, 50(4):1255--1321, 2022.

\bibitem{MQR}
K.~Matetski, J.~Quastel, and D.~Remenik.
\newblock {The KPZ fixed point}.
\newblock {\em Acta Math.}, 227(1):115--203, 2021.

\bibitem{O03}
N.~O'Connell.
\newblock Conditioned random walks and the {R}{S}{K} correspondence.
\newblock {\em J. Phys. A Math. Theor.}, 36(12):3049, 2003.

\bibitem{stanley1999enumerative}
R.~P. Stanley.
\newblock {\em Enumerative Combinatorics: Volume 2}.
\newblock Cambridge Studies in Advanced Mathematics. Cambridge University Press, New York, 1999.

\bibitem{zhang2019optimal}
L.~Zhang.
\newblock Optimal exponent for coalescence of finite geodesics in exponential last passage percolation.
\newblock {\em Electron. Commun. Probab.}, 25, 2020.

\end{thebibliography}
\bibliographystyle{plain}

\newpage

\appendix

\section{Gibbs property via RSK correspondence}  \label{sec:appa}

In this appendix we prove Theorem \ref{thm:gibbs-prob}.
We shall prove an analog of it for LPP with geometric weights, using the RSK correspondence, and then pass to a scaling limit.

Fix $\beta\in (0,1)$, and we consider i.i.d. geometric random variables $\{\hxi_u\}_{v\in\Z^2}$ with odds $\beta$.
For any $u\preceq v$ and $k\in \N$ we let $\hW_{u,v}^k$ be the analog of $W_{u,v}^k$ for these $\{\hxi_u\}_{v\in\Z^2}$; and we denote $\hW_{v}^k=\hW_{\boo,v}^k$ when $\boo \preceq v$. 

Let $\sT=\Z_{\ge 0}^{\{(a,b):a\ge 0, b\ge 0, a+b\le n+1\}}$. Let $\sI\subset \Z_{\ge 0}^{I_n\cup I_{n+1}}$ be the set consisting of all $\{p_{(a,b)}^k\}_{(a,b,k)\in I_n \cup I_{n+1}}$ satisfying $p_{(a,b)}^k\le p_{(a+1,b)}^k$ and $p_{(a,b)}^k\ge p_{(a+1,b)}^{k+1}$, for any 
$(a,b,k), (a+1,b,k)\in I_n \cup I_{n+1}$;
and $p_{(a,b)}^k\le p_{(a,b+1)}^k$ and $p_{(a,b)}^k\ge p_{(a,b+1)}^{k+1}$, for any 
$(a,b,k), (a,b+1,k)\in I_n \cup I_{n+1}$.
We let $F:\sT\to\sI$ be the function such that $F:\{\hxi_{(a,b)}\}_{a\ge 0, b\ge 0, a+b\le n+1} \mapsto \{\hW_{(a,b)}^k\}_{(a,b,k)\in I_n \cup I_{n+1}}$.
\begin{lemma}  \label{lem:rsk-bi}
The map $F$ is a bijection between $\sT$ and $\sI$.
\end{lemma}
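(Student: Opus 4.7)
The plan is to view $F$ as a restriction, to two consecutive antidiagonals, of the classical RSK growth correspondence, and then invert it via Fomin's local growth rule. At each lattice point $(a,b)$ with $a+b \le n+1$ I identify the tuple $\{\hW_{(a,b)}^k\}_{k}$ with a partition $\lambda(a,b)$ of length at most $1 + a \wedge b$. By Greene's theorem, $\lambda(a,b)$ coincides with the shape of the RSK $P$-tableau produced from the rectangular weight array $\{\hxi_{(i,j)}\}_{0 \le i \le a,\, 0 \le j \le b}$. Under this identification the two pairs of inequalities defining $\sI$ become exactly the standard statement that $\lambda(a,b)$ interlaces with $\lambda(a+1,b)$ and with $\lambda(a,b+1)$.

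The forward inclusion $F(\sT) \subset \sI$ then follows from a short disjoint-paths monotonicity: comparing families of $k$ vertex-disjoint up-right paths to adjacent endpoints $(a,b)$ and $(a+1,b)$ (resp.\ $(a,b+1)$) by truncating or extending the topmost path yields both interlacing inequalities $\hW^k_{(a,b)} \le \hW^k_{(a+1,b)}$ and $\hW^{k+1}_{(a+1,b)} \le \hW^k_{(a,b)}$, and the other direction is symmetric.

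For injectivity and surjectivity I would invoke Fomin's local growth rule for RSK: at each unit square with SW, SE, NW, NE corners carrying pairwise interlacing partitions $\mu, \nu, \nu', \lambda$, the assignment $(\mu, \nu, \nu', \hxi_{(a+1,b+1)}) \leftrightarrow (\nu, \nu', \lambda)$ is a bijection, where $\hxi_{(a+1,b+1)} \in \Z_{\ge 0}$ is the NE weight. This gives a backward rule: from the NW, SE, NE shapes one uniquely recovers both the SW shape and the NE weight. Starting from the prescribed shape data on $\L_n \cup \L_{n+1}$, every point on $\L_{n-1}$ is the SW corner of a unique unit square whose other three corners lie on $\L_n \cup \L_{n+1}$, so applying the backward rule at all such squares simultaneously recovers all shapes on $\L_{n-1}$ together with all weights $\hxi_v$ for $v \in \L_{n+1}$ with both coordinates positive; the remaining axis weights $\hxi_{(n+1,0)}$ and $\hxi_{(0,n+1)}$ are read off directly from the one-part axis shape increments $\lambda(k,0)_1 - \lambda(k-1,0)_1$ and $\lambda(0,k)_1 - \lambda(0,k-1)_1$. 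Iterating this procedure through $\L_{n-2}, \ldots, \L_0$ reconstructs the entire weight array on the triangle, producing a two-sided inverse to $F$.

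The main technical obstacle is the statement and invertibility of Fomin's local rule itself, a classical but combinatorially delicate result that I would either cite (e.g., Fomin's growth-diagram framework for RSK, or Krattenthaler's exposition) or derive inline by case analysis on the possible interlacing configurations at a single unit square. Once that is granted, the two-sided inverse constructed above together with the well-definedness check of the forward map finishes the bijection; the cardinality match between $\sT$ and $\sI$ (which can be verified by a direct count on two adjacent antidiagonals) provides a useful sanity check along the way.
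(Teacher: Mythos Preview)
Your proposal is correct and uses a genuinely different decomposition from the paper. The paper proceeds \emph{row by row}: it defines intermediate sets $B_i$ (row $i$ together with the already-processed portion of $\L_n\cup\L_{n+1}$) and builds a chain of bijections $F_0,\dots,F_{n+1}$, where the passage $F_i\to F_{i+1}$ is realized by applying the full rectangular RSK correspondence to an auxiliary array assembled from the row-$i$ shapes and the row-$(i+1)$ weights (this is essentially one sweep of Schensted row insertion). Your approach instead peels off one \emph{antidiagonal} at a time via Fomin's local growth rule at each unit square, with Greene's theorem supplying the identification $\lambda(a,b)_k=\hW^k_{(a,b)}$.

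Both routes rest on the same underlying RSK machinery, but your growth-diagram version is more directly adapted to the statement: since the target data $\sI$ lives on $\L_n\cup\L_{n+1}$, the antidiagonal iteration is the natural one, and each step is governed by a single local bijection rather than a global RSK on a rectangle. The paper's row-by-row scheme requires more bookkeeping (the sets $B_i$, $B_i'$, $B_i''$ and the explicit inverse $H_i$), but has the advantage of invoking only the classical RSK bijection itself rather than its growth-diagram reformulation. Either way the substantive content---Greene's theorem plus the invertibility of the RSK growth process---is the same; you should make sure to verify (or cite precisely) that the backward local rule always returns a genuine partition with the correct length bound $1+a\wedge b$, which follows from the interlacing $\mu\prec\nu,\nu'$ it produces.
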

From this lemma, and noting that $\sum_{(a,b,k)\in I_{n+1}} \hW_{(a,b)}^k - \sum_{(a,b,k)\in I_{n}} \hW_{(a,b)}^k = \sum_{a\ge 0, b\ge 0, a+b\le n+1}\hxi_{(a,b)}$, we get the distribution function for $\{\hW_{(a,b)}^k\}_{(a,b,k)\in I_n \cup I_{n+1}}$.
\begin{theorem}  \label{thm:gibbs-prob-geo}
For any non-negative $\{p_{(a,b)}^k\}_{(a,b,k)\in I_n \cup I_{n+1}}\in \sI$,
we have
\[
\P\left[ \hW_{(a,b)}^k = p_{(a,b)}^k,\; \forall (a,b,k)\in I_n\cup I_{n+1} \right] = (1-\beta)^{(n+2)(n+3)/2} \beta^{\sum_{(a,b,k)\in I_{n+1}} p_{(a,b)}^k - \sum_{(a,b,k)\in I_{n}} p_{(a,b)}^k}.
\]
\end{theorem}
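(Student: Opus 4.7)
The plan is to reduce the computation to three deterministic steps, all resting on the RSK bijection from Lemma \ref{lem:rsk-bi}. By that lemma, for any interlacing pattern $p\in\sI$, the event $\{\hW^k_{(a,b)}=p^k_{(a,b)}\text{ for all }(a,b,k)\in I_n\cup I_{n+1}\}$ corresponds to exactly one input configuration $\xi=F^{-1}(p)\in\sT$, and the probability is zero when $p\notin\sI$. Since the $\hxi_u$ are i.i.d.\ geometric with odds $\beta$, the joint mass at $\xi$ factors as $(1-\beta)^N\beta^{\sum_u\xi_u}$, where $N$ is the number of sites in the triangle $\{a,b\ge 0,\,a+b\le n+1\}$. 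Thus the left-hand side of the claim equals $(1-\beta)^N \beta^{\sum_u (F^{-1}(p))_u}$, and it remains to identify the exponent $\sum_u\xi_u$ with $\sum_{(a,b,k)\in I_{n+1}}p^k_{(a,b)}-\sum_{(a,b,k)\in I_n}p^k_{(a,b)}$. (The prefactor $(1-\beta)^N$ is a common normalization evidently suppressed in the stated formula.)

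The exponent identification is via Greene's theorem. For each $(a,b)$, the last property listed before the definition of $I_n$ gives $\hW^k_{(a,b)}=0$ for $k>1+a\wedge b$, hence
\[
\sum_{k\ge 1}\hW^k_{(a,b)}\;=\;L^{1+a\wedge b}_{\boo,(a,b)}\;=\;\sum_{u\preceq(a,b)}\hxi_u,
\]
the last equality because allowing enough disjoint paths in a finite rectangle lets one simply cover every site. Summing over $(a,b)\in\L_m$ and swapping the order of summation, using that for fixed $u$ the number of $(a,b)\in\L_m$ with $u\preceq(a,b)$ equals $m-d(u)+1$ when $d(u)\le m$, gives $\sum_{(a,b,k)\in I_m}\hW^k_{(a,b)}=\sum_{u:\,d(u)\le m}(m-d(u)+1)\,\hxi_u$. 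Subtracting the $m=n$ identity from the $m=n+1$ identity, the coefficient of $\hxi_u$ collapses to $1$ for every $u$ with $d(u)\le n+1$, so the difference telescopes to $\sum_{d(u)\le n+1}\hxi_u$, completing the identification.

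The main obstacle is Lemma \ref{lem:rsk-bi} itself, which is the substantive content of the RSK correspondence; the proof proceeds by induction on anti-diagonals, using the local moves of row insertion to recover $\{\hxi_{(a,b)}\}_{a+b=n+1}$ from the two levels of interlaced shape data and then recursing on the smaller triangle. Once Theorem \ref{thm:gibbs-prob-geo} is in hand, the exponential analogue Theorem \ref{thm:gibbs-prob} follows by a scaling limit $\beta=e^{-\e}$ with $\e\to 0$: the weights $\e\hxi$ converge to $\Exp(1)$ and the geometric mass converges to the displayed exponential density.
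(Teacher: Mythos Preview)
Your proof is correct and follows the same approach as the paper: invoke the bijection of Lemma \ref{lem:rsk-bi} and combine it with the weight-preservation identity $\sum_{(a,b,k)\in I_{n+1}}\hW^k_{(a,b)}-\sum_{(a,b,k)\in I_n}\hW^k_{(a,b)}=\sum_{u}\hxi_u$, which you justify via Greene's theorem and a telescoping count (the paper simply asserts this identity). Your remark that the stated formula suppresses the constant prefactor $(1-\beta)^N$ is also correct.
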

By sending $\beta\to 0$ and rescaling, this implies Theorem \ref{thm:gibbs-prob}.
Now it remains to prove Lemma \ref{lem:rsk-bi}.
\begin{proof}[Proof of Lemma \ref{lem:rsk-bi}]
For each $A\subset \Z_{\ge 0}^2$, let $I[A]=\{(a,b,k):(a,b)\in A, k \in \llbracket 1,1+a\wedge b\rrbracket\}$, and let $\sI[A]$ denote the set consisting of all $\{p_{(a,b)}^k\}_{(a,b,k)\in I[A]}$ satisfying the interlacing condition, i.e., $p_{(a,b)}^k\le p_{(a+1,b)}^k$ and $p_{(a,b)}^k\ge p_{(a+1,b)}^{k+1}$, for any 
$(a,b,k), (a+1,b,k)\in I[A]$; and $p_{(a,b)}^k\le p_{(a,b+1)}^k$ and $p_{(a,b)}^k\ge p_{(a,b+1)}^{k+1}$, for any $(a,b,k), (a,b+1,k)\in I[A]$.

Take $i\in\llbracket 0,n+1\rrbracket$, let $B_i=\{(a,i):0\le a \le n+1-i\}\cup \{(a,b):a+b\in\{n,n+1\}, 0\le b \le i\}$.
Then we would have $B_{n+1}=I_n\cup I_{n+1}$, and $\sI[B_{n+1}]=\sI$.
Also let $\sT_i=\Z_{\ge 0}^{\{(a,b):a\ge 0, b \ge i+1, a+b\le n+1\}}$, where we regard $\sT_{n+1}=\{0\}$.

We define a sequence of functions $F_i: \sT\to \sT_i\times \sI[B_i]$, for each $0\le i \le n+1$, by
\begin{equation} \label{eq:app-def-f}
F_i:\{\hxi_{(a,b)}\}_{a\ge 0, b\ge 0, a+b\le n+1} \mapsto \left( \{\hxi_{(a,b)}\}_{a\ge 0, b\ge i+1, a+b\le n+1}, \{\hW_{(a,b)}^k\}_{(a,b,k)\in J_i} \right).
\end{equation}
We would inductively prove that each of them is a bijection.
For $F_0$, it is a bijection since that for any $a\in\llbracket 0, n+1\rrbracket$, we have $\hW_{(a,0)}^1 = \sum_{a'=0}^a \hxi_{(a',0)}$.
Now suppose that $F_i$ is bijection. We will show that the function $F_{i+1} \circ F_i^{-1}$ is also a bijection.

Denote $B_i'=\{(a,i):0\le a \le n-i\}$ and $B_i''=\{(a,i+1):0\le a \le n-i\}\cup\{(n-i,i)\}$. We now define a bijection between $\Z_{\ge 0}^{\{(a,i+1)\}_{a=0}^{n-i}} \times \sI[B_i']$ and $\sI[B_i'']$.

\noindent\textbf{The function $G_i : \Z_{\ge 0}^{\{(a,i+1)\}_{a=0}^{n-i}} \times \sI[B_i'] \to \sI[B_i'']$.}

Take $\{q_{(a,i+1)}\}_{a=0}^{n-i}\in \Z_{\ge 0}^{\{(a,i+1)\}_{a=0}^{n-i}}$ and $\{p_{(a,i)}^k\}_{(a,i,k)\in I[B_i']} \in \sI[B_i']$.
We define $q_{(a,b)}$ for $(a,b)\in \llbracket 0,n-i\rrbracket \times \llbracket 0, i\rrbracket$:
when $a+b \le i-1$ we let $q_{(a,b)} = 0$; let $q_{(a,i-a)}=p_{(a,i-a)}^{a+1}$ for any $a\in \llbracket 0, (n-i)\wedge i\rrbracket$; and let $q_{(a,b)}=p_{(a,i)}^{i+1-b} - p_{(a-1,i)}^{i+1-b}$ for $a+b\ge i+1$.

For each $(a,b,k) \in I[\llbracket 0,n-i\rrbracket \times \llbracket 0, i+1\rrbracket]$ we let $l_{(a,b)}^k=\max_{\gamma_1,\ldots, \gamma_k} \sum_{j=1}^k \sum_{u\in \gamma_j}q_u$, where the maximum is over all mutually disjoint up-right paths $\gamma_1,\ldots, \gamma_k$ contained in $\llbracket 0,a\rrbracket \times \llbracket 0, b\rrbracket$; and let $p_{(a,b)}^1=l_{(a,b)}^1$, and $p_{(a,b)}^k=l_{(a,b)}^k-l_{(a,b)}^{k-1}$ for $k\ge 2$. 
Then we recover $p_{(a,i)}^k$ for each $(a,i,k)\in I[B_i']$.
This is because, from our construction of $\{q_{(a,b)}\}_{(a,b)\in \llbracket 0,n-i\rrbracket \times \llbracket 0, i+1\rrbracket}$ and the interlacing condition, for any $(a,i,k)\in I[B_i']$, the $k$ mutually disjoint maximum paths can be taken as the $k$ rows in $\llbracket 0,a\rrbracket \times \llbracket i+1-k, i \rrbracket$.

Obviously $\{p_{(a,b)}^k\}_{(a,b,k)\in I[B_i'']}$ satisfies the interlacing condition and is in $\sI[B_i'']$.
We then define \[G_i:\left(\{q_{(a,i+1)}\}_{a=0}^{n-i}, \{p_{(a,b)}^k\}_{(a,b,k)\in I[B_i']}\right)\mapsto \{p_{(a,b)}^k\}_{(a,b,k)\in I[B_i'']}.\]

Denote $B_i^+=\llbracket 0,n-i\rrbracket \times \{i+1\} \cup \{n-i\}\times \llbracket 0, i+1\rrbracket$.
The array $\{p_{(a,b)}^k\}_{(a,b,k)\in I[B_+]}$ is also equivalent to a pair of semi-standard Young tableaux $U, V$ of the same shape, obtained by applying RSK correspondence to $\{q_{(a,b)}\}_{(a,b)\in \llbracket 0,n-i\rrbracket \times \llbracket 0, i+1\rrbracket}$.
For each $(a,b)\in \llbracket 0,n-i\rrbracket \times \llbracket 0, i+1\rrbracket$ there is a Young diagram of at most $1+a\wedge b$ rows, denoted as $Y_{(a,b)}$, where there are $p_{(a,b)}^k$ boxes in row $k$. 
The Young tableaux $U, V$ would have shape $Y_{(n-i,i+1)}$.
For each box in $Y_{(n-i,i+1)}$, if $j$ is the smallest number such that the box is also in $Y_{(j,i+1)}$, then we write $j+1$ in this box in $U$;
if $j'$ is the smallest number such that the box is also in $Y_{(n-i,j')}$, then we write $j'+1$ in this box in $V$ (see e.g. \cite{stanley1999enumerative}).
We can also define a Young tableaux $U'$ with shape $Y_{(n-i,i)}$.
For each box in $Y_{(n-i,i)}$, if $j$ is the smallest number such that the box is also in $Y_{(j,i)}$, then we write $j+1$ in this box for $U'$.
This $U'$ is also (the first Young tableaux in) the RSK correspondence of $\{q_{(a,b)}\}_{(a,b)\in \llbracket 0,n-i\rrbracket \times \llbracket 0, i\rrbracket}$.
It is known that from $U'$, we can get $U$ by doing $q_{(a,i+1)}$ times Schensted insertions of $a+1$, sequentially for $a=0,\ldots, n-i$.

\noindent\textbf{The function $H_i : \sI[B_i''] \to \Z_{\ge 0}^{\{(a,i+1)\}_{a=0}^{n-i}} \times \sI[B_i']$.}

Take any $\{p_{(a,b)}^k\}_{(a,b,k)\in I[B_i'']} \in \sI[B_i'']$. For any $b\in\llbracket 0, i-1\rrbracket$, take
\begin{equation}  \label{eq:app-p}
p_{(n-i,b)}^k = 
\begin{cases}
p_{(n-i,i)}^{k+i-b}, \;& k+i-b \le 1+(n-i)\wedge i ; \\
0, \; & k+i-b > 1+(n-i)\wedge i.
\end{cases}    
\end{equation}
The array $\{p_{a,b}^k\}_{(a,b,k)\in I[B_i^+]}$ is in $\sI[B_i^+]$ and can be written as a pair of Young tableaux of the same shape.
Applying the inverse of RSK we get non-negative integers $\{q_{(a,b)}\}_{(a,b)\in \llbracket 0,n-i\rrbracket \times \llbracket 0, i+1\rrbracket}$. For $(a,i,k)\in I[B_i']$ we let $p_{(a,i)}^k=\sum_{j=0}^a q_{(j,i+1-k)}$.

To define $H_i$ there are several things we need to verify. First, note that $B_i''\cap B_i'=\{(n-i,i)\}$, so we verify that we recover $\{p_{(n-i,i)}^k\}_{k=1}^{1+(n-i)\wedge i}$. Second, we verify $\{p_{(a,b)}^k\}_{(a,b,k)\in I[B_i']} \in \sI[B_i']$. 
Define $l_{(n-i,b)}^k=\max_{\gamma_1,\ldots, \gamma_k} \sum_{j=1}^k \sum_{u\in \gamma_j}q_u$, where the maximum is over all $k$ mutually disjoint up-right paths contained in $\llbracket 0,n-i\rrbracket \times \llbracket 0, b\rrbracket$.
Then (due to the inverse of RSK) we have $l_{(n-i,b)}^k=\sum_{j=1}^k p_{(n-i,b)}^j$.
From \eqref{eq:app-p} we have that $l_{(n-i,b-1)}^b + l_{(n-i,b+k-1)}^k =l_{(n-i,b+k-1)}^{b+k} $, for any $0< b<b+k \le i+1$. This implies that $l_{(n-i,b+k-1)}^k = \sum_{a=0}^{n-i}\sum_{b'=b}^{b+k-1} q_{(a,b')}$,
for any $0\le b<b+k \le i+1$.
Thus $\sum_{j=0}^{n-i} q_{(j,i+1-k)}=l_{(n-i,i)}^k-l_{(n-i,i)}^{k-1}$ and recovers $p_{(n-i,i)}^k$.
Also we get that for any $a\in\llbracket 0, n-i \rrbracket$ and $b\in\llbracket 1, i \rrbracket$, there is $\sum_{a'=0}^{a-1} q_{(a',b)} \ge \sum_{a'=0}^{a} q_{(a',b-1)}$, implying that $\{p_{(a,i)}^k\}_{(a,i,k)\in I[B_i']} \in \sI[B_i']$.

We then define \[H_i: \{p_{(a,b)}^k\}_{(a,b,k)\in I[B_i'']} \mapsto \left(\{q_{(a,i+1)}\}_{a=0}^{n-i}, \{p_{(a,b)}^k\}_{(a,b,k)\in I[B_i']}\right).\]
It is straight forward to check that $G_i\circ H_i$ and $H_i\circ G_i$ are identity maps, so $G_i$ is a bijection.

Finally, for the function $F_{i+1}\circ F_i^{-1}$ from $\sT_i\times \sI[B_i]$ to $\sT_{i+1}\times \sI[B_{i+1}]$, it is in fact $G_i$ on $\Z_{\ge 0}^{\{(a,i+1)\}_{a=0}^{n-i}} \times \sI[B_i']$, because it can be realized as the same sequence of Schensted insertions on these coordinates; and it is the identity map on other coordinates.
Thus $F_{i+1}$ is also a bijection, and the induction closes. By taking $F=F_{n+1}$ the conclusion follows.
\end{proof}
\end{document}